\def\N{\mathbb{N}}
\def\eps{\varepsilon}
\def\R{\mathbb{R}}
\def\cN{\mathcal{N}}
\def\cE{\mathcal{E}}
\def\cH{\mathcal{H}}
\def\ast{*}
\def\weakto{\rightharpoonup}
\newtheorem{theorem}{Theorem}[section]
\newtheorem{corollary}[theorem]{Corollary}
\newtheorem{definition}[theorem]{Definition}
\newtheorem{lemma}[theorem]{Lemma}
\newtheorem{proposition}[theorem]{Proposition}
\numberwithin{equation}{section}
\theoremstyle{definition}
\newtheorem{remark}[theorem]{Remark}
\title{Uniqueness and nondegeneracy of least-energy solutions to fractional Dirichlet problems}
\author{
Abdelrazek Dieb,
Isabella Ianni\footnote{I. Ianni is  supported by INDAM - GNAMPA,   {\sl Fondi Ateneo - Sapienza},   PRIN.},
\& Alberto Saldaña \footnote{A. Saldaña is supported  by 
the 2021 Visiting Professor Program of La Sapienza University (Italy), by 
CONACYT grant A1-S-10457 (Mexico), and by UNAM-DGAPA-PAPIIT grant IA100923 (Mexico).
}}
\date{}
\begin{document}

\maketitle

\abstract{We prove the uniqueness and nondegeneracy of least-energy solutions of a fractional Dirichlet semilinear problem in  sufficiently large balls and in more general symmetric domains.  Our proofs rely on uniform estimates on growing domains, on the uniqueness and nondegeneracy of the ground state of the problem in $\mathbb R^N$, and on a new symmetry characterization of the eigenfunctions of the linearized eigenvalue problem in domains which are convex in the $x_1$-direction and symmetric with respect to a hyperplane reflection.}

\bigskip
\bigskip

\noindent\textsc{Keywords:} Fractional Laplacian, uniqueness, nondegeneracy, symmetry of eigenfunctions.
\medskip

\noindent\textsc{MSC2010:}
35S15 ·  
35A02 ·  
35B40.  
\medskip

\noindent\textsc{Data availability statement:}  the manuscript has no associated data.
\medskip

\section{Introduction and main results}
In the last decades many results have been obtained concerning  the uniqueness and nondegeneracy of positive solutions for the following  local semilinear elliptic problem
\begin{align}\label{c:intro}
    -\Delta u +\lambda u&= u^{p}\quad \text{ in }\Omega,\qquad u=0\quad \text{ on }\partial \Omega,
\end{align}
where $\Omega$ is a bounded  smooth domain in $\mathbb R^N$, $N\geq 2$, $\lambda \in \mathbb R$ and $p>1$.

In the seminal paper \cite{GNN1979}, uniqueness has been obtained when $\Omega$ is a ball and in the case $\lambda=0$ by a direct scaling argument and ODE techniques, after observing that the solution is radially symmetric. 
The case $\lambda\neq 0$ is more involved, it has  required the contribution of many authors
and it is spread in several papers (see \cite{NiNussbaumCPAM1985, KwongLiTAMS1992, ZhangCPDE1992, SrikanthDiffIntEq1993, AdimurthiYadavaARMA1994, AftalionPacellaJDE2003}), where the  uniqueness  result is proved when $\Omega$ is a ball and in the 
full range of values of  $p$ and $\lambda$ for which existence holds, namely for any  $p\in (1,2^*-1)$, ($2^*:=\frac{2N}{N-2}$ if $N>2$, $2^*:=\infty$ if $N=2$)  and any $\lambda> -\lambda_1(\Omega)$, where $\lambda_1(\Omega)$
is the
first Dirichlet eigenvalue of the Laplace operator $-\Delta$ in $\Omega$.

The uniqueness property depends strictly on the domain $\Omega$ and it is influenced by its topology and geometry, for instance it does not hold when $\Omega$ is an annulus or a suitable dumb-bell domain (see e.g.\cite{DancerJDE1988,DancerJDE1990}).
Nevertheless uniqueness results for positive solutions of~\eqref{c:intro} have been obtained in some domains $\Omega$ different from the  ball, especially in the case $\lambda=0$ (see \cite{ZouPisa1994,DancerMa2003, DamascelliGrossiPacellaAIHP1999,GrossiADE2000,lin1994,demak}), and some when $\lambda\neq 0$
(see \cite{DancerRockyMountain1995,McKennaPacellaPlumRothJDE2009,McKennaPacellaPlumRoth2012}). 
In \cite{ZouPisa1994} the domain $\Omega$ is a suitable perturbation of the ball, while  
\cite{DancerMa2003, DamascelliGrossiPacellaAIHP1999,GrossiADE2000} assume  $\Omega$ to be symmetric and convex with respect to $N$ orthogonal directions.
During the eighties the convexity of the domain $\Omega$ was conjectured to be a sufficient condition for uniqueness of positive solutions of problem~\eqref{c:intro} (see \cite{DancerJDE1988, KawohlLectureNotes1985}), and in the case $\lambda=0$ this conjecture was first partially solved by Lin (see \cite{lin1994}) in dimension $N=2$, who proved it  for \emph{least-energy solutions}, and only recently the proof has been extended to general positive solutions, for values of $p$ sufficiently close to $2^*$ (see \cite{demak, GrossiIanniLuoYan} for the case $N=2$, see  \cite{LiWeiZou} for $N\geq 3$). For $\lambda\neq 0$ the problem becomes even more difficult, the conjecture is still completely open and the available  uniqueness results are quite few. Apart form the case of the ball, the only  uniqueness results in the case $\lambda\neq 0$  of which we are aware are on  suitable \emph{symmetric large domains} $\Omega$ when  $\lambda>0$ (see \cite{DancerRockyMountain1995}), and when  $\Omega$ is the unit planar square, $\lambda\neq 0$, and either $p=2$ or $p=3$ (see   \cite{McKennaPacellaPlumRothJDE2009,McKennaPacellaPlumRoth2012} where uniqueness is obtained via a computer-assisted proof).

 Another interesting and related problem is the study of the spectral properties of the linear operator obtained by linearizing the equation~\eqref{c:intro} about a solution $u$, namely the study of the eigenvalues $\mu$ and the associated eigenfunctions $v$ of
\begin{align}\label{linearizzatoLocale}
-\Delta v +\lambda v-p|u|^{p-1}v=\mu v \quad\mbox{ in }\Omega,\qquad
v=0 \quad\mbox{ on }\partial \Omega.
\end{align}
In particular a solution $u$ is said to be nondegenerate  when $\mu=0$ is not an eigenvalue for~\eqref{linearizzatoLocale}. 

We  stress that, for problem~\eqref{c:intro}, the question of the uniqueness is strictly related to  the problem of the nondegeneracy  of the solutions. Indeed, as shown in \cite{lin1994} (see also \cite{DamascelliGrossiPacellaAIHP1999}), uniqueness for the positive solutions of~\eqref{c:intro} may be derived from the nondegeneracy via a continuation argument in the exponent $p$ based on the implicit function theorem, since both  uniqueness and nondegeneracy hold for $p$ close enough to $1$ (see \cite{DancerMa2003,DamascelliGrossiPacellaAIHP1999,lin1994}). Both the proofs of the nondegeneracy in \cite{lin1994} and in \cite{DamascelliGrossiPacellaAIHP1999} are then based on the study of 
the properties of the  eigenfunctions $v$ of~\eqref{linearizzatoLocale} corresponding to the zero eigenvalue $\mu=0$.
In particular, in \cite{DamascelliGrossiPacellaAIHP1999}, a purely PDE approach based on the maximum principle is exploited in order to derive symmetry properties of the eigenfunctions of the linearized problem.

In this work, we study  uniqueness, nondegeneracy and spectral properties in the nonlocal version of~\eqref{c:intro}. 
We consider the following nonlocal exterior Dirichlet problem
\begin{align}
\label{Peps}
(-\Delta)^s u +\lambda u=u^p \quad\mbox{ in }\Omega,\qquad
u=0 \quad\mbox{ in }\mathbb R^N \setminus \Omega,
\end{align}
where $\Omega\subset\mathbb R^N$, $N\geq 1$, is a smooth bounded domain, $s\in (0,1)$, $\lambda\in\mathbb R$, $p>1$ and
 $(-\Delta)^s$ denotes the fractional Laplacian given by
\begin{align}\label{operator}
(-\Delta)^{s}u(x):=c_{N,s}\, 
p.v.\int_{\mathbb{R}^{N}}{\frac{u(x)-u(y)}{|x-y|^{N+2s}}\, dy},\qquad c_{N,s}
:=4^{s}\pi^{-\frac{N}{2}}\frac{\Gamma(\frac{N}{2}+s)}{\Gamma(2-s)}s(1-s).
\end{align}
We also consider the eigenvalue problem for the associated linearized equation at a non-negative solution $u$ of~\eqref{Peps}
\begin{equation}\label{linearizedEqINTRO}
(-\Delta)^{s} v+\lambda v-pu^{p-1}v=\mu v \quad \text{ in }\Omega,\qquad v=0\quad  \text{ in }\mathbb R^N\backslash\Omega.
\end{equation}
Let us observe that if $u$ is a non-negative solution of~\eqref{Peps}, then by the fractional strong maximum principle it follows that
\[u>0 \qquad \mbox{ in }\Omega.\]
Though we are not interested in the existence of solutions for~\eqref{Peps},  let
us point out that, as simple computations show,  non-negative solutions of~\eqref{Peps}  do not exist if $\lambda\leq -\lambda_1^s(\Omega)$, where $\lambda_1^s(\Omega)$
is the
first eigenvalue of the fractional Laplace operator $(-\Delta)^s$ with exterior homogeneous Dirichlet
boundary conditions in $\Omega$.
Hence it makes sense to consider only the case \[\lambda>-\lambda_1^s(\Omega).\]
Furthermore, letting $2^\star_s$ be the critical fractional Sobolev exponent given by
\[2^\star_s=\frac{2N}{N-2s} \ \ \mbox{ if }2s<N \quad\mbox{ and }\quad 2^\star_s=+\infty\ \ \mbox{ if }2s\geq N( =1),\]
we recall that non-existence holds when $\Omega$ is starshaped, $\lambda\geq 0$, and the exponent $p\geq 2^\star_s-1$, $N>2s$ (via Pohozaev identity, cfr. \cite{Ros-OtonSerra2014}). On the other hand, existence of  non-negative  solutions for~\eqref{Peps} can be obtained by standard variational methods, for any $\lambda> -\lambda_1^s(\Omega)$ and $p\in(1, 2^\star_s-1)$ in any bounded domain $\Omega$ (by the compact embedding of the fractional Sobolev space  $\mathcal H^s_0(\Omega)\subset L^{p+1}(\Omega)$, see e.g. \cite{servadei2013variational})
 and,  for $p\geq 2^\star_s-1$ there may be existence depending on the domain $\Omega$ (e.g when $\Omega$ is an annulus, $\mathcal H^s_{0,rad}(\Omega)\subset L^{p+1}(\Omega)$ is compact if $s>\frac{1}{2}$ and $N\geq 2$, via fractional Strauss inequality cfr. \cite{deNapoli,ChoOzawa}).

Let us observe that  non-negative  solutions of~\eqref{Peps} are radially symmetric when $\Omega$ is a ball (see, e.g., \cite{JW14}); nevertheless, the usual methods  used to get uniqueness   in the local case,
such as shooting methods and other ODE techniques, are not available in the fractional setting. Furthermore, most of the essential tools usually used  to derive  nondegeneracy in the local case, such as Courant’s nodal theorems or Hopf Lemmas for sign-changing solutions, are also not available, which complicates the analysis of the linearized
associated problem~\eqref{linearizedEqINTRO} and makes the question of the nondegeneracy and uniqueness for the nonlocal problem~\eqref{Peps}
 very challenging, even in the case in which  $\Omega$ is a ball! To the authors's knowledge,  the only results available in the literature  for uniqueness and nondegeneracy of non-negative solutions of~\eqref{Peps}
 are in the asymptotic regimes  as $s\rightarrow 1$ or as $p\rightarrow 1$ (see \cite{DIS22}, see also \cite{FranzinaLicheri22} where in the case $\lambda=0$ the result for  $p\rightarrow 1$ has been also obtained). As a consequence of these results in the asymptotically linear regime (\emph{i.e.} $p\rightarrow 1$), one can then adapt to the nonlocal framework similar arguments as in \cite{lin1994,DamascelliGrossiPacellaAIHP1999}, and  show that if the nondegeneracy is achieved, then also the uniqueness may be recovered in the full range of $p$'s, by a continuation argument in $p$, based on the implicit function theorem (see Remark~\ref{rmk:nondegeneracyImpliesUniquenes}).

In this paper we prove nondegeneracy and uniqueness for \emph{least-energy solutions} of~\eqref{Peps}. Our main result is the following.
\begin{theorem}\label{theorem:uniquenessAndNondegeneracyB}
Let $N\geq 2$ and let $\Omega=B_R\subset\mathbb R^N$ be a ball of radius $R>0$. Let $s\in (0,1)$,  $p\in(1,2^\star_s-1),$ and $\lambda> 0$.  
There exists $R_0:=R_0(p,\lambda, s)\geq 1$ such that, if $R\geq R_0$, then problem~\eqref{Peps} has a unique least-energy solution and it is nondegenerate.
\end{theorem}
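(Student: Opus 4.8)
\emph{Strategy.} The plan is a blow-down argument. As $R\to\infty$, a least-energy solution $u_R$ of~\eqref{Peps} on $B_R$ should converge to the (known to be unique and nondegenerate) positive ground state $W$ of the limiting equation $(-\Delta)^s W+\lambda W=W^p$ in $\R^N$; here the hypothesis $\lambda>0$ is essential, since no decaying ground state exists when $\lambda\le 0$. Writing $L_\infty:=(-\Delta)^s+\lambda-pW^{p-1}$, the crucial inputs are $\ker L_\infty=\operatorname{span}\{\partial_{x_1}W,\dots,\partial_{x_N}W\}$, together with the facts that the radial part of $L_\infty$ has Morse index one and trivial kernel and that $L_\infty$ is positive on the spherical sectors of degree $k\ge 2$. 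The first task is to establish the \emph{uniform estimates on growing domains}: uniform $L^\infty$ bounds and uniform polynomial decay, $u_R(x)\lesssim(1+|x|)^{-(N+2s)}$, for all least-energy solutions, obtained from the variational characterization (the least-energy level on $B_R$ converges to the ground-state level on $\R^N$) and the subcriticality $p<2^\star_s-1$. Since every positive solution on a ball is radial, $u_R$ is radially symmetric with its maximum at the origin, so no translation occurs, and combined with the uniform estimates this yields $u_R\to W$ strongly in $\mathcal H^s(\R^N)$ and in $L^\infty(\R^N)$.

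\emph{Nondegeneracy.} I would argue by contradiction: suppose there are $R_n\to\infty$ and $v_n\in\mathcal H^s_0(B_{R_n})\setminus\{0\}$ with $\|v_n\|_{L^2}=1$ and $L_{R_n}v_n=0$, where $L_{R_n}:=(-\Delta)^s+\lambda-pu_{R_n}^{p-1}$. Since $u_{R_n}$ is radial, $L_{R_n}$ commutes with $O(N)$; decomposing into spherical-harmonic sectors we may assume $v_n$ lies in a single sector of degree $k$, fixed along a subsequence. (For the more general symmetric domains this step is replaced by the new symmetry characterization of the linearized eigenfunctions, which forces the degeneracy direction to be odd with respect to the reflection hyperplane.) Moreover $v_n$ solves a linear equation with a uniformly decaying potential, hence inherits the uniform decay, so $(v_n)$ is tight in $L^2(\R^N)$ and along a subsequence $v_n\to v$ strongly in $L^2(\R^N)$ with $\|v\|_{L^2}=1$ and $L_\infty v=0$, i.e.\ $v\in\operatorname{span}\{\partial_{x_j}W\}\setminus\{0\}$. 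If $k=0$, then $v$ is radial, but $\ker L_\infty$ contains no radial function, a contradiction; if $k\ge 2$, then $v$ lies in a sector of degree $\ge 2$, while $\operatorname{span}\{\partial_{x_j}W\}$ lies in the sector $k=1$, again a contradiction.

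\emph{The critical case $k=1$.} This is where the difficulty concentrates. After a rotation, $v_n$ is odd in $x_1$, even and axially symmetric in $x_2,\dots,x_N$, and $v_n\to c\,\partial_{x_1}W$ with $c\ne 0$. Since $0$ is then the bottom of the $k=1$-spectrum of $L_{R_n}$, a Perron--Frobenius argument together with the fractional Hopf lemma — where the new symmetry characterization enters — shows that $v_n$ has a fixed sign in $B_{R_n}\cap\{x_1>0\}$ and that $v_n/d^s$ is bounded below on $\partial B_{R_n}$, with $d=\operatorname{dist}(\cdot,\partial B_{R_n})$. I would then use a nonlocal Green/Pohozaev-type identity pairing $v_n$ against $\partial_{x_1}W$, which solves the linearized equation on $\R^N$ but does \emph{not} vanish outside $B_{R_n}$: the interior terms collapse to $p\int_{B_{R_n}}(W^{p-1}-u_{R_n}^{p-1})\,v_n\,\partial_{x_1}W\,dx$, while the leftover term is an explicit integral over $\R^N\setminus B_{R_n}$ whose sign and size are pinned down via the sharp asymptotics $W(x)\sim|x|^{-(N+2s)}$, $|\partial_{x_1}W(x)|\sim|x|^{-(N+2s+1)}$ and the sign of $v_n$. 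Matching the two sides — using that $\|W-u_{R_n}\|_{L^\infty}$ has the \emph{exact} order $R_n^{-(N+2s)}$ with a definite sign near the origin, which requires a refined asymptotic expansion of $u_{R_n}$, whereas the boundary term decays strictly faster — yields a contradiction for $n$ large. This matching of sharp rates, compensating for the absence of ODE methods and Courant-type nodal theorems in the nonlocal setting, is the main obstacle of the whole argument.

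\emph{Uniqueness.} Once nondegeneracy is in hand, uniqueness follows. Directly: if $u_R^{(1)}\ne u_R^{(2)}$ were two least-energy solutions on $B_R$ (both radial, both converging to $W$), then $w_R:=(u_R^{(1)}-u_R^{(2)})/\|u_R^{(1)}-u_R^{(2)}\|_{L^2}$ solves a linear equation with potential converging uniformly to $pW^{p-1}$, so by the uniform decay estimates $w_R\to v$ strongly in $L^2(\R^N)$ with $\|v\|_{L^2}=1$ and $L_\infty v=0$; but $w_R$ is radial, forcing $v$ radial, which contradicts $\ker L_\infty=\operatorname{span}\{\partial_{x_j}W\}$. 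Alternatively, as anticipated in the introduction, once nondegeneracy holds throughout the admissible range of $p$, one runs a continuation argument in the exponent based on the implicit function theorem and the known uniqueness as $p\to 1$, the local solution set near $W$ in a ball being a single point.
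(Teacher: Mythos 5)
Your blow-down strategy and your uniqueness argument essentially match the paper's approach (growing domains, convergence to the ground state $Q$ in $\R^N$, contradiction with the structure of $\ker L_+$), but there is a genuine gap in the nondegeneracy part, and it traces to a sign error in your reading of the paper's key symmetry result. Theorem~\ref{sym} (and Corollary~\ref{corollary:radialKernel} in the ball) asserts that any second eigenfunction of the linearized problem with $\mu_2\le 0$ is \emph{symmetric} (even) with respect to the reflection hyperplane, not odd as your parenthetical claims. The error is consequential: with the correct statement, if a least-energy solution $u_{R_n}$ (Morse index $1$) were degenerate, then $\mu_2=0$ and \emph{any} kernel element $v_n$ would automatically be radial in the ball (or $G$-invariant in the general symmetric domain). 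The sector $k=1$ simply cannot host a degeneracy direction, and there is nothing to prove there.

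Consequently, your entire treatment of the ``critical case $k=1$'' --- the Perron--Frobenius step, the pairing with $\partial_{x_1}W$, the sharp asymptotic matching --- is superfluous and, as written, not established: the assertion that $\|W-u_{R_n}\|_{\infty}$ has the exact order $R_n^{-(N+2s)}$ with a definite sign near the origin, and that the exterior remainder decays strictly faster, are speculative claims requiring a refined expansion of $u_{R_n}$ that you do not supply (and which you yourself flag as ``the main obstacle of the whole argument''). The paper avoids this obstacle entirely: by Corollary~\ref{corollary: symmetryOfKernel_CylindricalDomains} the hypothetical kernel element $v_n$ is $G$-invariant; normalizing $\|v_n\|=1$ and using the compact embedding $H^s_G(\R^N)\hookrightarrow L^{p+1}(\R^N)$ (Lemma~\ref{prop:compact}, requiring $N\ge 2$), one passes to a nontrivial $G$-invariant limit $v_*\in\ker(L_+)$, contradicting that $\ker(L_+)=\operatorname{span}(\partial_{x_1}Q,\ldots,\partial_{x_N}Q)$ consists of functions antisymmetric in some coordinate. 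No spherical-harmonic decomposition, no uniform pointwise decay bound $u_R\lesssim(1+|x|)^{-(N+2s)}$, and no refined asymptotics are needed; tightness in Proposition~\ref{prop:utoQ} comes from the coordinatewise monotonicity of $u_R$, not from decay rates. Your uniqueness paragraph, on the other hand, is essentially identical to the paper's argument and is fine.
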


We point out that in Theorem~\ref{theorem:uniquenessAndNondegeneracyB}
we can consider any $\lambda>0$. 
Moreover, differently from  the previous results in  \cite{DIS22}, here we cover both the full fractional range $s\in (0,1)$ and the complete range of superlinear powers  $p$'s for which there is existence. 

In order to achieve the nondegeneracy in Theorem~\ref{theorem:uniquenessAndNondegeneracyB}, a key ingredient is given by the following symmetry result for the linearized equation~\eqref{linearizedEqINTRO}, which is of independent interest.  In particular, it characterizes the reflectional symmetry of the eigenfunctions of~\eqref{linearizedEqINTRO}, whenever the domain is convex in the $x_1$-direction and symmetric with respect to the hyperplane $\left\lbrace x\in \mathbb R^N:\, x_1=0\right\rbrace$. 

\begin{theorem}\label{sym} 
Let $N\geq 1$ and assume  that $\Omega\subset\mathbb R^N$ is a smooth bounded domain which is convex in the $x_1$-direction and symmetric with respect to the hyperplane $H:=\left\lbrace x\in \mathbb R^N:\, x_1=0\right\rbrace$.  Let $s\in (0,1)$, $\lambda>-\lambda_1^s(\Omega)$, 
$p\in(1,2^\star_s-1]$, 
and $u$ be a non-negative solution of~\eqref{Peps}. 
Let $\mu_2$ be the second eigenvalue of~\eqref{linearizedEqINTRO} and assume that $\mu_2\leq 0$.
Then any eigenfunction $v$ solving
\begin{equation}\label{eq:secondEigen}
(-\Delta)^{s} v+\lambda v-pu^{p-1}v=\mu_2 v \quad \text{ in }\Omega,\qquad v=0\quad  \text{ in }\mathbb R^N\backslash\Omega,
\end{equation}
is (sign-changing and) symmetric with respect to $x_1$.
\end{theorem}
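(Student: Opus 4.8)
The plan is to exploit the moving-plane method adapted to the nonlocal setting, combined with a careful analysis of the second eigenfunction. First I would set up the reflection notation: for $x = (x_1, x') \in \mathbb R^N$ write $\sigma(x) = (-x_1, x')$ for the reflection across $H$, and for a function $w$ on $\Omega$ let $w^\sigma(x) := w(\sigma(x))$. Since $\Omega$ is symmetric with respect to $H$ and $u$ solves~\eqref{Peps}, uniqueness-type symmetry results (of the kind established in \cite{JW14} via the moving-plane method, which applies here because $\Omega$ is convex in the $x_1$-direction) guarantee that $u$ is itself symmetric in $x_1$, i.e. $u^\sigma = u$, and moreover $\partial_{x_1} u < 0$ in $\Omega \cap \{x_1 > 0\}$. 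This symmetry of $u$ is what makes the linearized operator $L := (-\Delta)^s + \lambda - p u^{p-1}$ commute with the reflection $\sigma$, so the eigenspace associated to $\mu_2$ splits into a symmetric part and an antisymmetric part; the whole point of the theorem is to rule out the antisymmetric (and more generally the non-symmetric) alternative.

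The key step is then to show that the \emph{first} eigenfunction $\varphi_1$ of $L$ on $\Omega$ — which is positive and, by the same commutation argument together with simplicity of $\mu_1$, symmetric in $x_1$ — and the function $\partial_{x_1} u$ interact in a way that forces any antisymmetric eigenfunction at level $\mu_2 \le 0$ to vanish. Concretely, $\partial_{x_1} u$ is antisymmetric, and differentiating the equation for $u$ shows it solves $L(\partial_{x_1} u) = 0$ in $\Omega \cap \{x_1 > 0\}$ with $\partial_{x_1} u \le 0$ on the boundary of that half-domain. Thus $\partial_{x_1} u$ (restricted to the half-domain, then odd-reflected) plays the role of a subsolution/eigenfunction candidate at eigenvalue $0$. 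The strategy is a comparison argument: if $v$ is an antisymmetric eigenfunction with eigenvalue $\mu_2 \le 0$, restrict to $\Omega^+ := \Omega \cap \{x_1 > 0\}$, where $v$ vanishes on $H \cap \Omega$ and outside $\Omega$; then $v$ is an eigenfunction of the Dirichlet realization of $L$ on $\Omega^+$ (in the nonlocal sense, with exterior condition on $\mathbb R^N \setminus \Omega^+$) with eigenvalue $\mu_2 \le 0$. On the other hand, $w := -\partial_{x_1} u > 0$ in $\Omega^+$ is a positive supersolution of $(L - 0)$ there. Applying the nonlocal maximum principle / the variational characterization of the first Dirichlet eigenvalue on $\Omega^+$ — the existence of a positive supersolution forces the first eigenvalue of $L$ on $\Omega^+$ to be $\ge 0$ — one concludes $\mu_2 \ge \mu_1(L, \Omega^+) \ge 0$, hence $\mu_2 = 0$, and in the equality case $v$ must be a multiple of $w$; but $w$ does not extend to an eigenfunction on all of $\Omega$ unless it vanishes (it has a nonzero normal-type trace on $H$), a contradiction unless $v \equiv 0$. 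Therefore no nonzero antisymmetric eigenfunction exists at level $\mu_2$, and combined with the symmetric/antisymmetric splitting this forces \emph{every} eigenfunction at level $\mu_2$ to be symmetric. The sign-changing property is then automatic: a symmetric eigenfunction at the second eigenvalue cannot be of one sign, since a one-signed eigenfunction would be the first eigenfunction and $\mu_2 > \mu_1$ strictly.

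The main obstacle I anticipate is the nonlocal comparison step on the half-domain $\Omega^+$: unlike the local case, restricting an antisymmetric function to $\{x_1>0\}$ and treating it as living on $\Omega^+$ requires care, because the fractional Laplacian is nonlocal and $(-\Delta)^s v$ at a point of $\Omega^+$ still sees the values of $v$ in $\{x_1<0\}$. Here the antisymmetry is essential and one must use the reflection identity for $(-\Delta)^s$ — namely that for an odd function $v$, $(-\Delta)^s v(x)$ for $x \in \Omega^+$ equals $c_{N,s}\,\mathrm{p.v.}\!\int (v(x)-v(y))\,K_\sigma(x,y)\,dy$ with a \emph{modified} kernel $K_\sigma(x,y) = |x-y|^{-N-2s} - |x - \sigma(y)|^{-N-2s} > 0$ on $\Omega^+ \times \Omega^+$ — so that $v$ genuinely solves a Dirichlet eigenvalue problem on $\Omega^+$ for an operator with a positive kernel, to which a Hopf-type lemma and the maximum-principle characterization of the principal eigenvalue still apply (cf. \cite{JW14} for such reflection computations). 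Establishing rigorously that $-\partial_{x_1}u$ is a bona fide positive supersolution for this modified operator on $\Omega^+$ (handling the behaviour near $H\cap\Omega$ and near $\partial\Omega$, where $u\sim \mathrm{dist}^s$) and that equality in the eigenvalue comparison can only occur for a multiple of it, is the technical heart of the argument.
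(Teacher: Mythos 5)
Your proposal takes a genuinely different route from the paper's, and the high-level outline is reasonable, but the pivotal step has a real gap that you flag but do not resolve, and the paper's proof supplies exactly the missing ingredient. Where you diverge: the paper does not decompose $v$ into a symmetric and an antisymmetric part. It instead polarizes $v$ with respect to $H$ and proves, via a Benedikt-type variational characterization of the second eigenvalue (Lemma~\ref{lemma:analog2.1} and Proposition~\ref{prop:polarizationIsEigenfunction}), that $Pv$ is again a second eigenfunction. The payoff is that $w:=(Pv)\circ\sigma-Pv$ is \emph{automatically nonnegative on $\Sigma^+$ by construction}, which is precisely the sign condition required to invoke the antisymmetric strong maximum principle of Jarohs--Weth and the antisymmetric Hopf lemma of Fall--Jarohs. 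By contrast your antisymmetric part $v_a=(v-v\circ\sigma)/2$ has no a priori sign on $\Omega^+$, and this is why you are forced into the spectral comparison through $\mu_1(L,\Omega^+)$, which is where the trouble lies.

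The gap: you use $-\partial_{x_1}u>0$ on $\Omega^+$ as a positive supersolution at level $0$ to argue $\mu_1(L,\Omega^+)\ge 0$. But $u\sim c\,\delta^s$ near $\partial\Omega$, so $\partial_{x_1}u$ blows up like $\delta^{s-1}$; it fails to be in $\mathcal H^s_0$, and for $s\le \tfrac12$ it is not even in $L^2(\Omega)$. The ``positive supersolution $\Rightarrow$ nonnegative principal eigenvalue'' step requires a supersolution against which the first eigenfunction can be tested, and here that pairing produces a nontrivial boundary contribution exactly because of the singularity, which your sketch ignores. Your treatment of the equality case is also off: $\partial_{x_1}u$ \emph{vanishes} on $H\cap\Omega$ (since $u$ is even in $x_1$), so the obstruction to $w=-\partial_{x_1}u$ being an admissible eigenfunction is its behavior at $\partial\Omega$, not any ``trace on $H$''. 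Finally, the reflected-kernel identity you write is missing a positive zero-order potential: for odd $v$ and $x\in\Sigma^+$ one has $(-\Delta)^s v(x)=c_{N,s}\,p.v.\int_{\Sigma^+}(v(x)-v(y))K_\sigma(x,y)\,dy + V(x)v(x)$ with $V(x)=2c_{N,s}\int_{\Sigma^+}|x-\sigma(y)|^{-N-2s}\,dy>0$; this does not hurt the sign structure but it should be recorded.

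What the paper does to close the argument is precisely the rigorous version of your ``test against $-\partial_{x_1}u$'' idea: it applies the Ros-Oton--Serra integration-by-parts (Pohozaev-type) identity to the pair $(u,w)$,
\begin{equation*}
\int_{\Omega}\frac{\partial u}{\partial x_{1}}(-\Delta)^{s}w + \frac{\partial w}{\partial x_{1}}(-\Delta)^{s}u\,dx=
-\Gamma (1+s)^2\int_{\partial\Omega}\frac{u}{\delta^{s}}\frac{w}{\delta^{s}}\nu_{1}\,d\sigma,
\end{equation*}
which converts the singular boundary behavior into a boundary integral with a definite sign, and combines it with the antisymmetric Hopf lemma (giving $w/\delta^s>0$ on $\partial\Omega\cap\Sigma^+$) and the monotonicity of $u$ to contradict $\mu_2\le 0$. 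If you want to keep your cleaner sym/anti decomposition and avoid polarization, the fix is: assuming a nontrivial antisymmetric eigenfunction at level $\mu_2$ exists, pass to the \emph{first} eigenfunction of the half-domain reflected problem (which is in the right space and sign-definite by standard theory), odd-extend it, and then run the paper's Hopf~$+$~Pohozaev step on this specific eigenfunction. That yields the same contradiction without the ill-defined supersolution comparison; the Pohozaev identity is the missing idea in your write-up.
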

We stress that this symmetry result holds true under the assumption that $u$ is \emph{any non-negative solution} of~\eqref{Peps}, differently from Theorem~\ref{theorem:uniquenessAndNondegeneracyB} where $u$ is required to be a least-energy solution. The assumption on the exponent $p$ implies that $u$ is bounded (see \cite[Proposition 9.1]{RosOtonSerraValdinoci}), so that \eqref{linearizedEqINTRO} admits a sequence of eigenvalues $\mu_i$ (see Section \ref{section:linearizedEigenvalueProblem}). Furthermore we can cover the full range of $\lambda$'s  for which~\eqref{Peps} admits a non-negative solution $u$, in particular  also $\lambda\leq 0$ is admissible. Dimension $N=1$ is also considered.

We point out that the assumption $\mu_2\leq 0$ is not restrictive when one aims at proving the nondegeneracy of $u$, indeed we can show that $\mu_1<0$ (see Section~\ref{sec:pre}), hence $u$ is clearly nondegenerate if $\mu_2>0$. As a consequence the case $\mu_2\leq 0$
is exactly the one to consider when studying a possible degeneracy of $u$.

As a special case, when $\Omega=B$,  Theorem~\ref{sym} implies the radial symmetry of any second eigenfunction for the eigenvalue problem~\eqref{eq:secondEigen}, with $\mu_2\leq 0$.

\begin{corollary}\label{corollary:radialKernel}
Let $N\geq 1$ and let $\Omega\subset\mathbb R^N$ be a ball. 
Let $s\in (0,1)$, $\lambda>-\lambda_1^s(\Omega)$, 
$p\in(1,2^\star_s-1]$ and let $u$ be a non-negative solution of~\eqref{Peps}.
If $\mu_2\leq 0$, then any nontrivial solution $v$ to~\eqref{eq:secondEigen} is (sign-changing and) radial. Moreover, $v$ changes sign at most two times if $N\geq 2$, and exactly once if $N=1$.
\end{corollary}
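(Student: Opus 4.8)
The plan is to deduce the corollary directly from Theorem~\ref{sym} by exploiting the rotational symmetry of the ball. First I would observe that a ball $B=B_R(0)$ (after translating its center to the origin) is convex in the $x_1$-direction and symmetric with respect to the hyperplane $H=\{x_1=0\}$, so Theorem~\ref{sym} applies and tells us that every solution $v$ of~\eqref{eq:secondEigen} is symmetric with respect to the reflection $x_1\mapsto -x_1$. The point is that this is true not only for the coordinate direction $e_1$ but, by the rotational invariance of the ball and of the operator $(-\Delta)^s$, for \emph{every} direction: given any unit vector $e\in\mathbb S^{N-1}$, we may apply an orthogonal change of variables mapping $e$ to $e_1$; since the equation~\eqref{eq:secondEigen}, the potential $u^{p-1}$ (which is radial, because $u$ is radial when $\Omega$ is a ball, see~\cite{JW14}) and the eigenvalue $\mu_2$ are all preserved, the rotated function is again a solution of~\eqref{eq:secondEigen}, and Theorem~\ref{sym} forces it to be even in $x_1$. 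Undoing the rotation, $v$ is symmetric with respect to the hyperplane $\{x\cdot e=0\}$ for every $e\in\mathbb S^{N-1}$, and a function on a ball that is symmetric under every such reflection through the origin is radial. The sign-changing statement is the one already contained in Theorem~\ref{sym}.

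Next I would address the statement on the number of sign changes, which for a radial function amounts to counting the zeros of the one-dimensional profile $\rho\mapsto\tilde v(\rho)$ on $(0,R)$, where $v(x)=\tilde v(|x|)$. For $N=1$ the claim is that $v$ changes sign exactly once: I expect this to follow from the fact that $\mu_2$ is the \emph{second} eigenvalue together with the characterization of $\mu_1$ and of the first eigenfunction. Indeed, the first eigenfunction associated with $\mu_1<0$ can be taken positive (it is the ground state of the linearized operator), and $v$, being $L^2$-orthogonal to it, must change sign; if $v$ changed sign more than once on $(0,R)$ it would have too much oscillation for a second eigenfunction — one can run a standard argument decomposing $v$ according to its nodal set and using the variational characterization of $\mu_1,\mu_2$ to reach a contradiction, or invoke the relevant nonlocal analogue of Courant-type nodal bounds available here. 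For $N\ge 2$ the corresponding bound is "at most two sign changes": a radial eigenfunction on the ball corresponds, roughly, to a one-dimensional problem, but the effective weight coming from the radial reduction of $(-\Delta)^s$ and the Jacobian factor slightly change the picture, which is why the bound is two rather than one; I would prove it by the same orthogonality/nodal-domain counting, now using that the second eigenspace of the linearized problem on the ball, before restricting to radial functions, also contains the (nonradial) derivatives-type eigenfunctions, so that a radial second eigenfunction is "pushed up" and may carry one extra nodal sphere.

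The main obstacle I anticipate is precisely this last point — making the nodal-count argument rigorous in the nonlocal setting, since, as the authors themselves emphasize in the introduction, Courant's nodal domain theorem and Hopf-type lemmas for sign-changing solutions are not available for $(-\Delta)^s$. The radial-symmetry part is essentially a soft consequence of Theorem~\ref{sym} plus rotational invariance and should be short; the sign-change count will require either a careful ad hoc argument using the Caffarelli--Silvestre extension (to localize and apply a boundary Hopf lemma on the extended problem) or a comparison with the known structure of the linearized spectrum around the radial ground state. I would therefore present the radial symmetry in full detail and then carry out the nodal-count argument via the extension, keeping track of how the radial restriction interacts with the variational characterization of $\mu_1$ and $\mu_2$.
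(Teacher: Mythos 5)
Your argument matches the paper's: the radial symmetry is obtained exactly as you say, by applying Theorem~\ref{sym} to every hyperplane through the origin and using that the ball, the potential $u^{p-1}$ (with $u$ radial by \cite{JW14}), the operator $(-\Delta)^s$ and the eigenvalue $\mu_2$ are all rotation-invariant; the paper routes this through the more general $G$-invariance statement (Corollary~\ref{corollary: symmetryOfKernel_CylindricalDomains} with $G=O(N)$), but the content is identical. For the sign-change count, the paper invokes Lemma~\ref{lemma:SignAlongBoundary}, whose (sketched) proof is precisely the Caffarelli--Silvestre extension together with a Courant-type nodal-domain bound for the extended eigenvalue problem, which is the tool you correctly anticipate. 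One small gap worth closing: to apply that lemma one should note that $v$ coincides (up to scalar) with the second \emph{radial} eigenfunction $\phi_{2,rad}$; this holds because $\phi_1$ is radial and, under $\mu_2\le 0$, Theorem~\ref{sym} forces every second eigenfunction to be radial, so the second radial eigenvalue equals $\mu_2$.

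One heuristic in your outline is off. You attribute the bound ``two sign changes for $N\ge 2$ versus one for $N=1$'' to nonradial, derivative-type eigenfunctions sitting below the radial second eigenfunction and ``pushing it up.'' That cannot be the mechanism here: under $\mu_2\le 0$, Theorem~\ref{sym} already shows there is \emph{no} nonradial eigenfunction at level $\mu_2$. The actual source of the discrepancy is topological, in the extension: for $N\ge 2$, a radial trace with sign pattern $+,-,+$ on nested annuli is compatible with the extension having only two nodal domains (the inner ball and outer annulus can be bridged through the bulk $\{t>0\}$), whereas for $N=1$ the evenness of the trace on $(-R,R)$ doubles the pattern to $+,-,+,-,+$, and a Jordan-curve argument in the half-plane forces at least three nodal domains; Courant's bound of two then rules out more than one sign change in that case. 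So the tool you flag (extension plus nodal count) is the right one, but the ``why two'' reasoning needs to be replaced by this topological argument.
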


Theorem \ref{sym} and Corollary \ref{corollary:radialKernel} are the counterpart, in the nonlocal framework, of classical symmetry results for the eigenfunctions of \eqref{linearizzatoLocale} associated to eigenvalues $\mu\leq 0$ (see  \cite{DamascelliGrossiPacellaAIHP1999, Babin,AftalionPacellaJDE2003,LinNi1988}).

Concerning the proof of Theorem~\ref{sym}, we observe that
showing symmetry properties for sign-changing solutions is in general nontrivial and does not follow from a moving-planes type argument.
We took some ideas from the proof of \cite[Theorem 2.1]{DamascelliGrossiPacellaAIHP1999}, where
in the local case a similar result has been derived by the use of the maximum principle and the Hopf lemma. 
However, a main difficulty in the nonlocal setting arises from the fact that strong maximum principles and Hopf Lemmas are not available for  general 
 sign-changing solutions.

Our proof  exploits a nonlocal version of the strong maximum principles and of the Hopf Lemma, which hold for antisymmetric solutions (see \cite{JarohsWeth2016,FallJarohs2015}).

More precisely, we start from a nontrivial  solution $v$ of the linearized eigenvalue problem~\eqref{eq:secondEigen}  and we first rearrange it by considering its polarization $Pv$ with respect to the hyperplane $H$.  Following some arguments similar as in \cite{Benedikt}, based on the introduction of a new variational characterization for the second eigenfunctions, we can show that $Pv$ also solves~\eqref{eq:secondEigen}.  
We then define the antisymmetric eigenfunction $w$ given by the difference between $Pv$ evaluated at a point and at the point obtained by reflection with respect to the hyperplane $H$.
By construction, $w$ does not change sign in the half-domain $\{x\in \Omega\ :\ x_1>0\}$. 
As a consequence, $w$ satisfies  the nonlocal antisymmetric  maximum principle 
 and Hopf Lemma. Hence,  by a nonlocal integration by parts formula (from \cite{Ros-OtonSerra2014}), we  can conclude that $w=0$, namely that $Pv$ is symmetric, from which the symmetry of $v$ finally follows.\\

The symmetry property for the solutions of the linearized equation given in Theorem~\ref{sym} is then used to get the nondegeneracy result stated in Theorem~\ref{theorem:uniquenessAndNondegeneracyB}.
Indeed let us observe that if we restrict to consider non-negative solutions $u$ of~\eqref{Peps} with Morse index equal to $1$ (e.g. least-energy solutions, as  in Theorem~\ref{theorem:uniquenessAndNondegeneracyB}), then 
saying that $u$ is degenerate, means that  $\mu_2=0$ and that the eigenvalue problem~\eqref{eq:secondEigen} admits a nontrivial solution $v$, which is \emph{radially symmetric} by Theorem~\ref{sym}. In our proof we then achieve the conclusion by 
growing the domain and combining this symmetry result with the nondegeneracy and uniqueness result proved in \cite{FLS16} for ground state solutions of 
\begin{align}\label{Q:intro}
(-\Delta)^s Q +\lambda Q = Q^p \quad\mbox{ in }\mathbb R^N.
\end{align}
Indeed the result in \cite{FLS16} implies in particular that the solutions of the associated linearized problem at $Q$ are \emph{not radial}.
As a consequence we can deduce that $\mu_2>0$, provided  the radius of the ball is large enough, thus getting the nondegeneracy of $u$. We remark that some compactness in the subspace of radial functions is needed in the asymptotic procedure, for this reason in  Theorem~\ref{theorem:uniquenessAndNondegeneracyB} we restrict to  $N\geq 2$ (see Remark \ref{rmk:compactembeddings}).
As already observed, uniqueness then follows from nondegeneracy, anyway we prove uniqueness  with a different and more direct argument, again based on asymptotic methods and on the uniqueness and nondegeneracy result in \cite{FLS16} (see Section~\ref{subsec:UniqNondegLarge}).\\

We stress that, if one was able to exhibit the existence of a non-symmetric second eigenfunction, then Theorem~\ref{sym} would imply that $\mu_2>0$ necessarily. Hence, since we know that $\mu_1<0$, the nondegeneracy   for \emph{any} non-negative solutions of~\eqref{Peps} would immediately follow and, from it, the uniqueness! 

In the case when $\Omega=B$ in a previous version of this paper (cf. \cite{DIS23}) we erroneously considered as a nonradial candidate for the second eigenfunction, the function obtained from a radial second eigenfunction $w$ by taking its polarization $P_a w$  with respect to the hyperplane $H_a=\{x\in\mathbb R^N: x_1=a\}$, for  suitable $a\neq 0$. Sadly when $a\neq 0$ it cannot be proven that $P_a w$ is a second eigenfunction for the nonautonomous linear problem~\eqref{eq:secondEigen} (see Remark~\ref{rkk:polarization_only_with_a=0} for more details). We point out that the same mistake has been made in the papers \cite{PD23,FW23preprint,LiSong24preprint},  where the same function $P_aw$, $a\neq 0$ has been considered (in $B$ and in the whole $\mathbb R^N$) as a candidate for the nonradial second eigenfunction.\\

We believe that the symmetry result in Theorem~\ref{sym} may be a main tool to get nondegeneracy and uniqueness results in many situations, also different from the one considered in Theorem~\ref{theorem:uniquenessAndNondegeneracyB}, for instance in the case when $\Omega$ is convex, $\lambda=0$ (i.e. the fractional Lane-Emden problem), and $p=2^\star_s-1-\varepsilon$, for $\varepsilon>0$ small. We will discuss these results in a forthcoming paper.\\

Finally we notice that the proof of Theorem~\ref{theorem:uniquenessAndNondegeneracyB} can be extended to obtain a more general result. To be more precise, let us consider a domain $D$ that satisfies the following.
\begin{itemize}
\item[$(\mathcal S)$] $D \subset\mathbb R^N$ ($N\geq 2$) is a Lipschitz bounded $G$-invariant domain and  
convex in the $x_i$-direction,
for any $i=1,\ldots,N,$
\end{itemize}
where $G$ is the group of isometries
$G:=O(m_1)\times \ldots \times O(m_\ell)\subseteq O(N), $
with $\ell\geq 1$, $m_i\geq 2$ for $i=1,\ldots,\ell$ such that $\sum_{i=1}^\ell m_i =N,$ and $O(m_i)$ denotes the group of linear isometries of $\R^{m_i}$. 

\begin{theorem}\label{theorem:nondegeneracyLARGE}\label{theorem:uniquenessLARGE}
Let $s\in (0,1)$, $\lambda>0$ and $p\in(1,2^\star_s-1)$. Let $N\geq 2$, let $D\subset\mathbb R^N$ satisfy $(\mathcal S)$, and let 
\begin{align*}
\Omega= D_{R}:=R D=\{ x \in \mathbb R^N\ :\ R^{-1}x\in D \}\ \mbox{  for }R\geq 1.   
 \end{align*}
Then there exists $R_0=R_0(D,p,\lambda, s)\geq 1$ such that, for $R\geq R_0$, the problem~\eqref{Peps} has a unique least-energy solution and it is nondegenerate.
\end{theorem}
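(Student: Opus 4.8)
The plan is to mimic, step by step, the proof of Theorem~\ref{theorem:uniquenessAndNondegeneracyB}, replacing the ball $B_R$ by the scaled domain $D_R$ and replacing the use of radial symmetry by the use of $G$-invariance. First I would set up the variational framework on $D_R$: least-energy solutions of~\eqref{Peps} on $\Omega=D_R$ are mountain-pass critical points of the associated energy functional, and by a standard argument they have Morse index exactly $1$, so that $\mu_1<0\le\mu_2$. A translation/dilation change of variables $u_R(x):=u(Rx)$ (together with the scaling of $(-\Delta)^s$) turns the problem on $D_R$ into a problem on the fixed domain $D$ with a small parameter in front of the nonlocal operator, or equivalently one rescales so that the solution concentrates; in either case one obtains uniform-in-$R$ estimates (uniform $L^\infty$ and $\mathcal H^s$ bounds, and a lower bound on the energy) exactly as in the ball case. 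The key point is that because $D$ is convex in each $x_i$-direction and $G$-invariant, any least-energy solution on $D_R$ is itself $G$-invariant (by the moving-plane/symmetrization argument that already gives radial symmetry when $D$ is a ball), so after centering we may work in the subspace $\mathcal H^s_{0,G}(D_R)$ of $G$-invariant functions, where the embedding into $L^{p+1}$ is compact because each block has dimension $m_i\ge2$ (this is precisely why $N\ge2$ and the structure of $G$ are needed, cf. Remark~\ref{rmk:compactembeddings}).

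Next I would run the asymptotic/blow-up analysis. Suppose, for contradiction, that there is a sequence $R_n\to\infty$ and least-energy solutions $u_n$ on $D_{R_n}$ that are degenerate; then $\mu_2(u_n)=0$ and there is a normalized eigenfunction $v_n$ solving~\eqref{eq:secondEigen} with $\mu_2=0$. Since $D$ is convex in the $x_1$-direction and symmetric with respect to $\{x_1=0\}$ after centering, Theorem~\ref{sym} applies and forces $v_n$ to be symmetric in $x_1$; applying the same argument in each coordinate direction (using that $D$ is convex in every $x_i$-direction and $G$-invariant) shows that $v_n$ is in fact $G$-invariant. Now, using the uniform estimates, after translating so the solutions are centered, $u_n\to Q$ in $\mathcal H^s(\mathbb R^N)\cap L^\infty$, where $Q$ is the unique positive radial ground state of~\eqref{Q:intro} (here uniqueness and nondegeneracy of $Q$ from~\cite{FLS16} enter). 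Correspondingly $v_n$ converges (strongly, thanks to the compactness in the $G$-invariant subspace) to a nontrivial solution $v_\infty$ of the limiting linearized equation $(-\Delta)^s v_\infty+\lambda v_\infty-pQ^{p-1}v_\infty=0$ in $\mathbb R^N$, and $v_\infty$ inherits the $G$-invariance, hence in particular is radial. But~\cite{FLS16} says the kernel of this limiting linearized operator is spanned by $\partial_{x_1}Q,\dots,\partial_{x_N}Q$, none of which is radial (indeed each is odd in the corresponding variable), so the only radial element of the kernel is $0$, contradicting $v_\infty\ne0$. This contradiction yields $\mu_2(u)>0$ for all $R\ge R_0$, i.e. nondegeneracy.

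Finally, uniqueness. Rather than deducing it from nondegeneracy via a continuation argument in $p$, I would argue directly by the same blow-up scheme: if for some $R_n\to\infty$ there were two distinct least-energy solutions $u_n^{(1)}\ne u_n^{(2)}$ on $D_{R_n}$, then after centering both converge in $\mathcal H^s\cap L^\infty$ to the ground state $Q$; writing $w_n:=(u_n^{(1)}-u_n^{(2)})/\|u_n^{(1)}-u_n^{(2)}\|$ and passing to the limit, $w_n\to w_\infty$, a nontrivial solution of the limiting linearized equation, so $w_\infty\in\mathrm{span}\{\partial_{x_1}Q,\dots,\partial_{x_N}Q\}$; but $w_\infty$ is $G$-invariant (again by Theorem~\ref{sym} applied in each direction to the difference, or by the symmetry of the $u_n^{(i)}$), hence radial, hence $w_\infty=0$, a contradiction. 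Care must be taken that the two sequences are centered consistently (both solutions of a least-energy problem on a $G$-invariant convex domain are themselves $G$-invariant and their ``centers'' coincide with the center of $D_R$), so no spurious translation appears in the limit.

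\textbf{Main obstacle.} The hard part is the uniform-in-$R$ compactness and convergence to $Q$: one must show that least-energy solutions on $D_R$, after the correct rescaling and centering, do not split into multiple bubbles and do not lose mass to the boundary or to infinity, and that the associated linearized eigenfunctions converge \emph{strongly} (not merely weakly) so that the limit is nontrivial. This is where the $G$-invariance is essential, because it is only in the $G$-invariant subspace that one has the compact Sobolev embedding needed to upgrade weak to strong convergence; establishing that the least-energy level on $D_R$ coincides (in the limit) with the ground-state level of~\eqref{Q:intro}, and that the profile decomposition has a single bubble, is the technical heart of the argument.
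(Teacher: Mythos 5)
Your overall plan coincides with the paper's: establish that least-energy solutions $u_R$ on $D_R$ converge to the ground state $Q$ of \eqref{Q:intro} as $R\to\infty$, show that any would-be degenerate direction or normalized difference of two solutions converges in the $G$-invariant (hence compactly embedded) subspace to a nontrivial element of $\ker(L_+)$, and then contradict the structure $\ker(L_+)=\operatorname{span}(\partial_{x_1}Q,\dots,\partial_{x_N}Q)$ from \cite{FLS16} using that the limit inherits the $G$-symmetry while each $\partial_{x_i}Q$ is odd in $x_i$.

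However, you explicitly leave the convergence $u_R\to Q$ in $H^s(\mathbb R^N)$ as an unproven ``main obstacle,'' and this is precisely the content the paper supplies in its Section 4.1. It does so \emph{without} the rescaling $u(Rx)$ you suggest: one works directly on the growing domains, notes $\cN(D)\subset\cN(D_R)$ gives a uniform upper bound on the energy levels $c_R$, shows $c_R\to c$ by testing with truncations of $Q$, and then proves no mass escapes to infinity via a Lions dichotomy combined with the monotone decrease of $u_R$ in each $|x_i|$ (from \cite[Corollary 1.2]{JW14}): if a bubble escaped along $x_n$, monotonicity along the segment from $0$ to $x_n$ (which stays inside $D_{R_n}$ by the convexity in each $x_i$ in $(\mathcal S)$) would produce arbitrarily many disjoint balls carrying a fixed amount of $L^2$-mass, contradicting the uniform $H^s$ bound. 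No uniform $L^\infty$ control and no profile decomposition are used. A second, smaller, issue: in your uniqueness step you propose applying Theorem~\ref{sym} to the normalized difference $w_n=(u_n^{(1)}-u_n^{(2)})/\|\cdot\|$, but $w_n$ solves a linear equation with the interpolated potential $c_n=\int_0^1 p|tu_n^{(1)}+(1-t)u_n^{(2)}|^{p-1}dt$, not the linearized eigenvalue problem at a solution $u$, so Theorem~\ref{sym} is not directly applicable to $w_n$. The correct route, which you also mention as an alternative, is simply that both $u_n^{(i)}$ are individually $G$-invariant by \cite{JW14}, hence so is $w_n$; this is what lets you invoke the compact embedding $H^s_G(\mathbb R^N)\hookrightarrow L^{p+1}(\mathbb R^N)$ to get that the weak limit $w_*$ is nontrivial. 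With the compactness statement filled in along the lines above, your argument matches the paper's.
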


Clearly, if $D$ is a ball, then it satisfies $(\mathcal S)$, hence Theorem~\ref{theorem:uniquenessAndNondegeneracyB} is a corollary of Theorem~\ref{theorem:nondegeneracyLARGE}. 

On the other hand, there are domains that satisfy $(\mathcal S)$ which are not balls; for instance, the ellipsoids 
\begin{align*}
D:=\left\{ x=(x_1,\ldots,x_\ell)\in \R^{m_1}\times \ldots\times \R^{m_\ell}\::\: \frac{|x_1|^2}{a_1}+\ldots+\frac{|x_\ell|^2}{a_\ell}\leq 1 \right\},
\end{align*}
where $a_i>0$,  $m_i\geq 2$, $\ell\geq 2$ and $N=\sum_{i=1}^\ell m_i(\geq 4)$.

We remark that, for the local problem~\eqref{c:intro}, a result in the spirit of the one in Theorem~\ref{theorem:nondegeneracyLARGE} can be found in \cite{DancerRockyMountain1995}.  

Furthermore, by choosing $R:=\eps^{-\frac{1}{2s}}$, Theorem~\ref{theorem:uniquenessLARGE} and a change of variables immediately yields a uniqueness and nondegeneracy result for singularly perturbed problems.
\begin{corollary}
Let $s\in (0,1)$, $\lambda>0$ and $p\in(1,2^\star_s-1)$.   If $N\geq 2$ and $D\subset\mathbb R^N$ satisfies $(\mathcal S)$, then there is $\eps_0>0$ such that, for $\eps\in(0,\eps_0)$, the  problem
\begin{align*}
\varepsilon^{2s}(-\Delta)^s u+\lambda u=u^p\quad \text{ in }D,\qquad 
u=0 \quad \text{ in }\mathbb R^N \setminus D,
\end{align*}
has a unique least-energy solution and it is nondegenerate.
\end{corollary}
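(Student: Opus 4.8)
The plan is to reduce the corollary to the already-stated Theorem~\ref{theorem:uniquenessLARGE} by a direct rescaling argument. Given $\eps \in (0,\eps_0)$, set $R := \eps^{-\frac{1}{2s}}$, so that $R \geq 1$ precisely when $\eps \leq 1$; choosing $\eps_0 := \min\{1, R_0(D,p,\lambda,s)^{-2s}\}$ with $R_0$ the threshold from Theorem~\ref{theorem:uniquenessLARGE} guarantees $R \geq R_0$. First I would spell out the change of variables: if $u$ solves the singularly perturbed problem on $D$, define $v(y) := u(y/R)$ for $y \in RD = D_R$ (and $v \equiv 0$ outside). Using the scaling identity $(-\Delta)^s[u(\cdot/R)](y) = R^{-2s}\big((-\Delta)^s u\big)(y/R)$, one checks that $\eps^{2s}(-\Delta)^s u + \lambda u = u^p$ in $D$ is equivalent, after multiplying by $\eps^{2s} = R^{-2s}$ and unwinding the substitution, to $(-\Delta)^s v + \lambda v = v^p$ in $D_R$ with $v = 0$ in $\mathbb{R}^N \setminus D_R$, i.e.\ exactly problem~\eqref{Peps} on $\Omega = D_R$. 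Conversely, $v \mapsto v(R\,\cdot)$ sends solutions on $D_R$ back to solutions on $D$, so the correspondence $u \leftrightarrow v$ is a bijection between the two solution sets.

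Next I would verify that this bijection respects both the least-energy structure and nondegeneracy, since those are the two properties the corollary asserts. For the energy functional, the substitution $y = Rx$ introduces an overall positive Jacobian factor $R^N$ together with the $R^{-2s}$ weight on the Gagliardo seminorm, so the rescaled functional on $D$ equals a fixed positive multiple of the functional~\eqref{Peps} on $D_R$; in particular critical points correspond to critical points and the ordering of critical values (hence the notion of a least-energy solution) is preserved. Likewise, $v$ is a solution of the linearized equation~\eqref{linearizedEqINTRO} on $D_R$ with eigenvalue $\mu$ if and only if the correspondingly rescaled function solves the analogous linearized equation on $D$ with eigenvalue $\eps^{2s}\mu = R^{-2s}\mu$; since $R^{-2s} > 0$, the zero eigenvalue is preserved, so nondegeneracy transfers. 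Because $D$ satisfies $(\mathcal S)$ and $D_R = RD$ also satisfies $(\mathcal S)$ for every $R \geq 1$ (dilations commute with the $G$-action and preserve convexity in each $x_i$-direction), Theorem~\ref{theorem:uniquenessLARGE} applies verbatim to $\Omega = D_R$ with $R \geq R_0$, giving a unique nondegenerate least-energy solution there, and transporting it back through the bijection yields the unique nondegenerate least-energy solution of the singularly perturbed problem.

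There is essentially no hard step here: the content is entirely in Theorem~\ref{theorem:uniquenessLARGE}, and the corollary is a bookkeeping exercise. The only point requiring a little care is the bookkeeping of the scaling exponents---making sure that the factor $\eps^{2s}$ is matched to $R^{-2s}$ so that the coefficient of $(-\Delta)^s$ becomes exactly $1$ after normalization, and that the same rescaling simultaneously turns the linearized problem on $D$ into the linearized problem on $D_R$ without altering the sign of eigenvalues. One should also note explicitly that the map between function spaces $\mathcal{H}^s_0(D) \to \mathcal{H}^s_0(D_R)$, $u \mapsto u(\cdot/R)$, is a (weighted) isomorphism, so no regularity or integrability is lost in either direction. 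Once these elementary identities are recorded, the statement follows immediately, which is why we present it as a corollary rather than a theorem.
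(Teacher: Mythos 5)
Your approach is exactly the paper's: the paper itself states, just before the corollary, that the result follows from Theorem~\ref{theorem:uniquenessLARGE} by choosing $R:=\eps^{-\frac{1}{2s}}$ and changing variables, and you fill in the details of that rescaling and of the transport of the least-energy and nondegeneracy properties. That part of your write-up is correct and well organized.

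However, there is an arithmetic slip in the central scaling identity that you should not let pass silently. You set $R=\eps^{-\frac{1}{2s}}$ and then assert ``$\eps^{2s}=R^{-2s}$''; with $R=\eps^{-\frac{1}{2s}}$ one has $R^{-2s}=\eps$, not $\eps^{2s}$ (these agree only when $s=\tfrac12$). Tracing the substitution $v(y):=u(y/R)$ with the identity $(-\Delta)^s v(y)=R^{-2s}\big((-\Delta)^s u\big)(y/R)$, the equation $(-\Delta)^s v+\lambda v=v^p$ on $D_R$ pulls back to $R^{-2s}(-\Delta)^s u+\lambda u=u^p$ on $D$. So the choice $R=\eps^{-\frac{1}{2s}}$ produces the coefficient $\eps$ in front of $(-\Delta)^s$, whereas matching the coefficient $\eps^{2s}$ appearing in the corollary requires $R=\eps^{-1}$. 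In other words, exactly one of the two conventions in the statement needs adjusting: either replace the coefficient $\eps^{2s}$ by $\eps$, or replace the scaling $R=\eps^{-\frac{1}{2s}}$ by $R=\eps^{-1}$. (The paper's own one-line hint carries the same inconsistency, so this is worth flagging rather than reproducing.) With either consistent choice, the rest of your argument — that $\cH^s_0(D)\to\cH^s_0(D_R)$, $u\mapsto u(\cdot/R)$ is an isomorphism up to a fixed positive factor, that critical values scale by a positive constant so least-energy solutions correspond, that the linearized operators scale by $R^{-2s}>0$ so the zero eigenvalue is preserved, and that $D_R$ satisfies $(\mathcal S)$ whenever $D$ does — goes through unchanged and gives the corollary.
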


\medskip

The paper is organized as follows.   In Section~\ref{sec:pre} we define our notation and present some preliminary results.   Section~\ref{sec:symm} is devoted to the proof of the symmetry result for the linearized problem Theorem~\ref{sym}.    Finally, in Section~\ref{section:uniquenessLargeDomain} we show  Theorem~\ref{theorem:uniquenessLARGE} regarding
the uniqueness and nondegeneracy result in large symmetric domains. Theorem~\ref{theorem:uniquenessAndNondegeneracyB} is a special case of Theorem~\ref{theorem:uniquenessLARGE}.

\subsection*{Acknowledgements}This work was conceived and partially completed while the authors were mutually visiting the following institutions, which they wish to thank for the warm hospitality:
 \emph{SBAI Department} (\emph{Sapienza} University, Rome, Italy), \emph{Instituto de Matemáticas} (UNAM, Mexico City, Mexico), \emph{KIT Math Department} (Karlsruhe, Germany), \emph{ICTP} (Trieste, Italy).
 The authors also would like to thank M. Fall, E. Parini, and T. Weth for useful discussions.

\section{Notation and preliminary results}\label{sec:pre}

For a function $w$ we denote 
\begin{equation}\label{defpositiveNegativaPart}
    w^+:=\max\{w,0\}\geq 0 \qquad\mbox{ and }\qquad w^-:=\max\{-w,0\}\geq 0 ,
\end{equation}
hence $w=w^+-w^-$.  For an open set $D\subset \R^N$,  $N\geq 2$, we define 
\begin{align*}
&|w|_{q;D}:=\left(\int_{D}|w|^q\, dx\right)^\frac{1}{q}\quad \text{ for }q\in[1,\infty),\qquad |w|_{\infty;D}:=\sup_{D}|w|.
\end{align*}
We sometimes omit $D$ if $D=\R^N$, namely, $|w|_{q}:=|w|_{q;\R^N}$.

Let $s\in (0,1)$, $\Omega\subset \R^N$ be a smooth open bounded set,  and let $\lambda>-\lambda_1^s(\Omega)$, where $\lambda_1^s(\Omega)$ denotes the first Dirichlet eigenvalue of $(-\Delta)^s$ in $\Omega.$  We define $H^{s}(\R^N)$ the fractional Sobolev space,
\begin{align*}
H^s(\R^N)=\left\lbrace w \in
L^2(\R^N):\frac{|w(x)-w(y)|}{|x-y|^{\frac{N}{2}+s}} \in
L^2(\R^N\times\R^N)\right\rbrace ,
\end{align*}
endowed with the (equivalent) norm
\begin{align}\label{norm:def}
\|w\|^2:= \lambda|w|_2^2+\cE(w,w),
\end{align}
where, for $w,v\in H^s(\R^N)$, 
\begin{align*}
\cE(w,v):=\dfrac{c_{N,s}}{2}\iint_{\R^N\times \R^N}\dfrac{(w(x)-w(y))(v(x)-v(y))}{|x-y|^{N+2s}}dx\,dy
\end{align*}
and $c_{N,s}:=4^{s}\pi^{-\frac{N}{2}}\frac{\Gamma(\frac{N}{2}+s)}{\Gamma(2-s)}s(1-s)$ is a normalizing constant. 

For a given smooth open bounded set $\Omega\subset \R^N$, let
\begin{align*}
\cH^s_0(\Omega)=\left\{ w \in H^{s}(\R^N):\, \, w=0 \, \text{ in } \R^N\backslash \Omega \right\}.    
\end{align*}
Notice that $\cH^s_0(\Omega)$ is a Hilbert space with the norm $\|\cdot\|$.  We also use $H^1_0(\Omega)$ to denote the usual Sobolev space of weakly differentiable functions with zero trace. 

For $m\in \N_0$, $\sigma\in{(0,1]}$, $s=m+\sigma$, we write $C^s(\Omega)$ (resp. $C^s(\overline{\Omega})$) to denote the space of $m$-times continuously differentiable functions in $\Omega$ (resp. $\overline{\Omega}$) whose derivatives of order $m$ are locally $\sigma$-H\"older continuous in $\Omega$ {(or Lipschitz continuous if $\sigma=1$)}.  We use $[\,\cdot\,]_{C^\sigma(\Omega)}$ for the Hölder seminorm in a domain $\Omega$, namely, 
\begin{align*}
        [w]_{C^\sigma(\Omega)}:=\sup_{\substack{x,y\in \Omega\\x\neq y}}\frac{|w(x)-w(y)|}{|x-y|^\sigma}
\end{align*}
and $\|w\|_{C^s(\Omega)}:= \sum_{|\alpha|\leq m}|\partial^\alpha w|_{\infty;\Omega}+\sup_{|\alpha|=m}[\partial^\alpha w]_{C^\sigma(\Omega)}$ is the usual Hölder norm in $C^s(\Omega)$.

\subsection{The linearized problem}\label{section:linearizedEigenvalueProblem}
Let $\Omega\subset\mathbb R^N$, $N\geq 1$ be a smooth open bounded set and let us consider the problem 
\begin{align} \label{P}
(-\Delta)^s u +\lambda u=|u|^{p-1}u \mbox{ in }\Omega,\qquad u=0 \mbox{ in }\mathbb R^N \setminus \Omega,
\end{align}
where $s\in (0,1)$, $p\in (1, 2^\star_s-1]$ and $\lambda>-\lambda_1^s(\Omega)$.\\

We introduce the  eigenvalue problem associated to the linearized equation at a solution $u$ of~\eqref{P}, namely
\begin{equation}\label{eigenvalueProblematu}
(-\Delta)^{s} \phi+\lambda \phi-p|u|^{p-1}\phi=\mu\phi \quad\text{ in }\Omega,
\qquad
\phi=0 \quad\text{ in }\mathbb R^N\setminus\Omega,
\end{equation}
It is well known that~\eqref{eigenvalueProblematu} admits an increasing sequence of eigenvalues $\mu_i\in \mathbb R$. We denote by $\phi_i$ the associated eigenfunctions. The eigenvalues $\mu_i$ can be variationally characterized using  the bilinear form  $\mathcal{B}:H^s(\mathbb R^N)\times H^s(\mathbb R^N)\to \R$  defined as
\begin{equation}\label{BilinearForm}\mathcal{B}(\varphi,\psi):=\cE(\varphi,\psi)+\lambda\int_{\Omega}\varphi \psi \,dx-\int_{\Omega} p|u|^{p-1}\varphi \psi\,dx,
\end{equation} 
in particular let us recall  that
\begin{align}\label{variationalmu1}
\mu_{1}:=\min \left\{\frac{\mathcal{B}(\varphi,\varphi)}{\int_{\Omega}\varphi^{2}dx}\ : \ \varphi\in  \cH_0^{s}(\Omega)\setminus\{0\} \right\}
\end{align}
and
\begin{equation}
\label{variationalCharctmu2}
\mu_2:=\min\left\{\frac{\mathcal B(\varphi,\varphi)}{\int_{\Omega}\varphi^2dx}\ :\ \varphi\in \mathcal H^s_0(\Omega)\setminus\{0\},\ \int_{\Omega}\phi_1 \varphi =0\right\}.
\end{equation}
Furthermore minimizers of~\eqref{variationalmu1} and~\eqref{variationalCharctmu2} are the associated eigenfunctions, i.e. solutions to the eigenvalue problem~\eqref{eigenvalueProblematu}  with $\mu=\mu_1$ and $\mu_2$ respectively (by arguing as in \cite{servadei2013variational}, for example).

\begin{definition}
A solution $u$ of~\eqref{P} is said to be nondegenerate if $\mu=0$ is not an eigenvalue for~\eqref{eigenvalueProblematu}. Namely if the linearized problem at $u$:
\begin{equation}\label{linearizedEquationSection}
(-\Delta)^{s} v+\lambda v-p|u|^{p-1}v=0\quad\text{ in }\quad \Omega,\qquad v=0 \quad\text{ in }\quad \mathbb R^N\setminus\Omega,
\end{equation}
admits only the trivial solution $v\equiv 0$ in $\R^N$.
\end{definition}

\begin{definition}
The Morse index $m(u)$ of a solution $u$ of~\eqref{P} is the number of the strictly negative eigenvalues of 
\eqref{eigenvalueProblematu}
counted with multiplicity.
\end{definition}

The following equivalence can be easily proven.
\begin{lemma}\label{lemma:equivalenceMorse}
  $m(u)$ coincides with   
  the maximal dimension of a subspace of $\cH_0^s(\Omega)$ where the quadratic form $\varphi\mapsto {\mathcal B}(\varphi,\varphi)$ is negative definite. 
\end{lemma}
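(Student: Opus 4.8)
The plan is to prove the two directions of the claimed equivalence separately, relating the Morse index $m(u)$ — defined as the number of strictly negative eigenvalues $\mu_i<0$ of \eqref{eigenvalueProblematu} counted with multiplicity — to the maximal dimension of a subspace of $\cH_0^s(\Omega)$ on which $\varphi\mapsto \mathcal B(\varphi,\varphi)$ is negative definite. The key structural facts I would invoke are those recalled just above: the linearized operator has a discrete, nondecreasing sequence of eigenvalues $\mu_i$ with associated eigenfunctions $\phi_i$ which we may take to form an $L^2(\Omega)$-orthonormal system, and (by the standard min-max / Courant–Fischer characterization, extending \eqref{variationalmu1}–\eqref{variationalCharctmu2}) these eigenfunctions also span $\cH_0^s(\Omega)$ in the appropriate sense, with $\mathcal B(\phi_i,\phi_j)=\mu_i\,\delta_{ij}$.

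First I would show $\geq$: if $\mu_1\le\cdots\le\mu_k<0$ are the negative eigenvalues (so $k=m(u)$), set $V:=\operatorname{span}\{\phi_1,\dots,\phi_k\}\subset\cH_0^s(\Omega)$, which has dimension $k$. For any $\varphi=\sum_{i=1}^k c_i\phi_i\in V\setminus\{0\}$, bilinearity gives $\mathcal B(\varphi,\varphi)=\sum_{i=1}^k \mu_i c_i^2<0$ since at least one $c_i\neq 0$ and every $\mu_i<0$; hence $\mathcal B$ is negative definite on $V$, so the maximal dimension is at least $m(u)$.

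Next I would show $\leq$: suppose $W\subset\cH_0^s(\Omega)$ is a subspace of dimension $d$ on which $\mathcal B$ is negative definite, and suppose for contradiction that $d>m(u)=k$. Consider the $L^2$-orthogonal complement of $\operatorname{span}\{\phi_1,\dots,\phi_k\}$, i.e. the closed subspace $Z:=\{\varphi\in\cH_0^s(\Omega):\int_\Omega \phi_i\varphi\,dx=0,\ i=1,\dots,k\}$. By a dimension count, $W\cap Z\neq\{0\}$ (since imposing $k<d$ linear constraints on a $d$-dimensional space leaves a nontrivial subspace). Pick $0\neq\varphi\in W\cap Z$; expanding $\varphi$ in the eigenbasis, $\varphi=\sum_{i\ge k+1} c_i\phi_i$, so $\mathcal B(\varphi,\varphi)=\sum_{i\ge k+1}\mu_i c_i^2\ge \mu_{k+1}\sum_{i\ge k+1}c_i^2\ge 0$, because $\mu_{k+1}\ge 0$ by definition of $k=m(u)$. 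This contradicts $\varphi\in W\setminus\{0\}$ with $\mathcal B(\varphi,\varphi)<0$. Hence $d\le k$, and combining the two inequalities gives equality.

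The main obstacle — and the only point needing care rather than routine linear algebra — is the spectral-decomposition input in the $\leq$ direction: one must justify that the eigenfunctions $\{\phi_i\}$ form an orthonormal basis of $L^2(\Omega)$ with $\mathcal B(\varphi,\varphi)=\sum_i\mu_i\langle\varphi,\phi_i\rangle_{L^2}^2$ for $\varphi\in\cH_0^s(\Omega)$, and that this series is controlled (the quadratic form $\mathcal B$ is bounded below on the unit $L^2$-sphere intersected with $\cH_0^s(\Omega)$ and the sum over $i\ge k+1$ is $\ge\mu_{k+1}|\varphi|_{2;\Omega}^2$). This follows from the compactness of the embedding $\cH_0^s(\Omega)\hookrightarrow L^2(\Omega)$ and the boundedness of $u$ (guaranteed by $p\in(1,2^\star_s-1]$, cf. the discussion after Theorem~\ref{sym}), which makes $\varphi\mapsto\int_\Omega p|u|^{p-1}\varphi^2\,dx$ a form that is compact relative to $\cE+\lambda|\cdot|_2^2$; the spectral theorem for the resulting self-adjoint operator with compact resolvent then yields the eigenbasis. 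With this in hand the argument is entirely the Courant–Fischer dimension-counting argument sketched above.
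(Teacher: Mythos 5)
Your proof is correct. The paper does not actually include a proof of this lemma — it is stated with the remark that it ``can be easily proven'' — and the argument you give, namely the spectral decomposition of the self-adjoint linearized operator with compact resolvent combined with the standard Courant--Fischer dimension-counting argument in both directions, is precisely the routine proof the authors have in mind.
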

We will also need the following properties.
\begin{lemma}\label{eg vl neg}
Let $(\phi_1,\mu_1)$ and $(\phi_2,\mu_2)$ be the first and second eigenpair of~\eqref{eigenvalueProblematu} respectively, where $u$ is a non-negative solution of~\eqref{P}. Then, $\phi_1$ does not change sign, $\phi_2$ is sign-changing  and $\mu_2>\mu_1$. Furthermore $m(u)\geq 1$, namely
\begin{equation}\label{firstEigenNegative}
\mu_1<0.\end{equation}
\end{lemma}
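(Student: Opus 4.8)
\textbf{Proof proposal for Lemma~\ref{eg vl neg}.}

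The plan is to establish the four assertions---$\phi_1$ does not change sign, $\phi_2$ changes sign, $\mu_2>\mu_1$, and $\mu_1<0$---by exploiting the variational characterizations \eqref{variationalmu1}, \eqref{variationalCharctmu2} together with basic properties of the bilinear form $\mathcal B$ and the fact that $u$ itself is a natural test function.

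\emph{Step 1: $\phi_1$ does not change sign.} First I would observe that since $\mathcal B(\varphi,\varphi)$ depends on $\varphi$ only through $\cE(\varphi,\varphi)$ and through the local integrals $\int_\Omega(\lambda-p|u|^{p-1})\varphi^2$, and since $\cE(|\varphi|,|\varphi|)\leq\cE(\varphi,\varphi)$ for every $\varphi\in H^s(\R^N)$ (with strict inequality unless $\varphi$ has constant sign, because the kernel $|x-y|^{-N-2s}$ is strictly positive), the minimum in \eqref{variationalmu1} is attained at a function of constant sign: replacing $\phi_1$ by $|\phi_1|$ does not increase the Rayleigh quotient, so $|\phi_1|$ is also a minimizer and hence a solution of \eqref{eigenvalueProblematu} with $\mu=\mu_1$. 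Thus we may take $\phi_1\geq 0$, and then the fractional strong maximum principle applied to $(-\Delta)^s\phi_1=(p|u|^{p-1}-\lambda-\mu_1)\phi_1$ (using that $u$ is bounded because $p\leq 2^\star_s-1$, so the zeroth-order coefficient is bounded) gives $\phi_1>0$ in $\Omega$. In particular $\mu_1$ is simple and any eigenfunction associated with a higher eigenvalue, being $L^2$-orthogonal to $\phi_1>0$, must change sign; this gives that $\phi_2$ is sign-changing.

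\emph{Step 2: $\mu_2>\mu_1$.} This is again a consequence of simplicity of $\mu_1$: if $\mu_2=\mu_1$ then $\phi_2$ would be a minimizer of \eqref{variationalmu1} as well, hence of constant sign, contradicting $\int_\Omega\phi_1\phi_2=0$ with $\phi_1>0$. Therefore $\mu_2>\mu_1$.

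\emph{Step 3: $\mu_1<0$.} Here I would use $u$ itself as a test function in \eqref{variationalmu1}. Since $u$ solves \eqref{P}, testing the equation against $u$ gives $\cE(u,u)+\lambda\int_\Omega u^2=\int_\Omega|u|^{p+1}$, and therefore
\begin{align*}
\mathcal B(u,u)=\cE(u,u)+\lambda\int_\Omega u^2-p\int_\Omega|u|^{p-1}u^2=\int_\Omega|u|^{p+1}-p\int_\Omega|u|^{p+1}=(1-p)\int_\Omega|u|^{p+1}<0,
\end{align*}
because $p>1$ and $u\not\equiv 0$. Hence the Rayleigh quotient of $\mathcal B$ at $u$ is strictly negative, so $\mu_1\leq \mathcal B(u,u)/\int_\Omega u^2<0$, and $m(u)\geq 1$ follows from Lemma~\ref{lemma:equivalenceMorse} (the span of $u$ is a one-dimensional subspace on which the quadratic form is negative definite).

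The only point requiring a little care---and the one I expect to be the main (mild) obstacle---is Step~1: justifying rigorously that $\cE(|\varphi|,|\varphi|)\le \cE(\varphi,\varphi)$ with equality characterization in the fractional setting, and then applying the correct version of the fractional strong maximum principle to conclude $\phi_1>0$ in $\Omega$ (which is what forces every later eigenfunction to be sign-changing). Both facts are standard in the nonlocal literature, so the argument is routine once they are invoked; everything else is a direct computation with the test function $u$.
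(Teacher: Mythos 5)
Your proposal is correct and follows essentially the same route as the paper: the first three assertions are treated via the variational characterizations \eqref{variationalmu1}, \eqref{variationalCharctmu2} together with the inequality $\cE(|\varphi|,|\varphi|)\le\cE(\varphi,\varphi)$ (which the paper dismisses as standard), and the crucial point $\mu_1<0$ comes from exploiting that $u$ itself is a solution with $p>1$. The only cosmetic difference is in the last step: you plug $u$ into the Rayleigh quotient and use $\mathcal B(u,u)=(1-p)\int_\Omega u^{p+1}<0$, whereas the paper tests the eigenvalue equation at $\phi_1$ against $u$ and the nonlinear equation against $\phi_1$ to obtain the identity $\mu_1\int_\Omega u\phi_1=(1-p)\int_\Omega u^p\phi_1$; both are two-line computations yielding $\mu_1<0$.
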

\begin{proof}
The first assertions are well known and may be proved in a standard way, using the  variational characterization of $\mu_1$ in~\eqref{variationalmu1} and $\mu_2$ in~\eqref{variationalCharctmu2}.
Next, we show that $\mu_1<0$.
Using $u$ as a test function in the weak formulation of~\eqref{eigenvalueProblematu} and $\phi_1$ as a test function in~\eqref{P}, we obtain
\begin{align*}
\mu_{1}\int_{\Omega}u\phi_1=(1-p)\int_{\Omega}u^p\phi_1,
\end{align*} 
from which the conclusion follows recalling that $p>1$ and that $u>0$ and $\phi_1>0$ in $\Omega$.
\end{proof}
\begin{remark}[Symmetry and monotonicity of $\phi_1$]
\label{remark:symmmetryPhi1}
Since  $\phi_1$  is non-negative, when the domain $\Omega$  is convex in the $x_1$-direction and symmetric with respect to the hyperplane $H:=\left\lbrace x\in \mathbb R^N:\, x_1=0\right\rbrace$, then 
      $\phi_1$ has reflectional symmetry with respect to $H$ and it is strictly monotone decreasing  in $|x_1|$ for $x=(x_1,\ldots,x_N)\in\Omega$ (this follows from moving plane arguments, see  \cite[Corollary 1.2]{JW14}).
\end{remark}
In the radial case we also know
\begin{lemma}\label{lemma:SignAlongBoundary}
Let $\Omega=B\subset \mathbb R^N,$ be a ball of radius $R>0$ and let $\phi_{2,rad}\in\mathcal H^s_0(\Omega)\setminus\{0\}$ be the second radial  eigenfunction of  the linearized problem~\eqref{eigenvalueProblematu}. Then $r\in[0,R]\mapsto \phi_{2,rad}(r)$  changes sign at most $2$ times. If $N=1$ it changes sign exactly once.
\end{lemma}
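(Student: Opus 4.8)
The plan is to transfer the problem to a one-dimensional ODE-type setting by exploiting the Caffarelli–Silvestre extension, or alternatively to argue directly via a nonlocal nodal-domain count adapted to the radial sector. I would first observe that since $\Omega=B$ is a ball and $u$ is radial (non-negative solutions in balls are radial by \cite{JW14}), the linearized operator $(-\Delta)^s + \lambda - p u^{p-1}$ preserves radial symmetry, so it makes sense to restrict it to the closed subspace $\mathcal H^s_{0,\mathrm{rad}}(B)$ and to speak of the second \emph{radial} eigenfunction $\phi_{2,\mathrm{rad}}$. I would then set up the variational characterization of $\phi_{2,\mathrm{rad}}$ exactly as in \eqref{variationalmu1}–\eqref{variationalCharctmu2} but with the constraint space replaced by $\mathcal H^s_{0,\mathrm{rad}}(B)$: the first radial eigenfunction $\phi_{1,\mathrm{rad}}$ is one-signed (same argument as in Lemma \ref{eg vl neg} restricted to radial functions, or because the global $\phi_1$ is already radial by uniqueness of the sign-definite eigenfunction) and $\phi_{2,\mathrm{rad}}$ is $L^2$-orthogonal to it, hence sign-changing.

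The core of the argument is to show the nodal set of $r\mapsto\phi_{2,\mathrm{rad}}(r)$ on $[0,R]$ cannot have more than two sign changes. The cleanest route is a \emph{contradiction via polarization/rearrangement in the radial variable}. Suppose $\phi_{2,\mathrm{rad}}$ had at least three sign changes, i.e. there are radii $0\le r_1<r_2<r_3\le R$ delimiting at least four intervals of alternating sign. On such a configuration one can build a competitor in $\mathcal H^s_{0,\mathrm{rad}}(B)$, supported on a union of two of these radial "annuli" (chosen so that $\phi_1$-orthogonality can be arranged by a one-parameter linear combination), whose Rayleigh quotient is strictly less than $\mu_{2,\mathrm{rad}}$ — contradicting the minimality in the two-constraint characterization. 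For the nonlocal energy $\cE$ one must be careful: unlike the local Dirichlet energy, cutting and recombining does not decrease $\cE$ for free, so instead I would use the following standard trick for second eigenfunctions (cf. the argument behind Lemma \ref{eg vl neg}): test the equation \eqref{eigenvalueProblematu} at $\mu_{2,\mathrm{rad}}$ against the truncations $\phi_{2,\mathrm{rad}}^+$ and $\phi_{2,\mathrm{rad}}^-$ and exploit the inequality $\cE(\phi^\pm,\phi^\pm)\le \pm\cE(\phi,\phi^\pm)$ (positivity of the kernel) to conclude that $\{\phi_{2,\mathrm{rad}}>0\}$ and $\{\phi_{2,\mathrm{rad}}<0\}$ are each, in the appropriate weak sense, "first eigenfunctions" of the restricted problem on their supports; a Faber–Krahn-type monotonicity then forces these supports to be, respectively, a ball and its complementary annulus (at most two sign changes when $N\ge 2$, where the center contributes no boundary, and exactly one sign change when $N=1$ where both endpoints are genuine boundary points).

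The dimension distinction — "at most twice if $N\ge2$, exactly once if $N=1$" — I would handle as follows. When $N=1$, the ball is a symmetric interval $(-R,R)$; a radial function is an even function on $(-R,R)$, and evenness plus sign-changing already forces it to be positive near $0$ and negative near $\pm R$ (or vice versa), i.e. exactly one sign change in $r\in[0,R]$, because an even second eigenfunction that changed sign twice in $r$ would change sign four times in $x$ and one could split off an odd competitor lowering the quotient. When $N\ge2$ the behavior at $r=0$ is unconstrained (the "inner boundary" is absent), so the count of sign changes in $[0,R]$ can be one or two but not three — the extra allowed sign change corresponds precisely to the possibility that $\phi_{2,\mathrm{rad}}$ is, say, negative–positive–negative along the radius. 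The main obstacle I anticipate is making the nodal-count rigorous in the nonlocal setting without the Sturm oscillation machinery: I would rely on the truncation/energy-splitting argument above together with the nonlocal strong maximum principle for one-signed solutions of $((-\Delta)^s+\lambda-pu^{p-1})\phi = \mu\phi$ restricted to radial functions, which guarantees that a one-signed radial "eigenfunction on its nodal region" cannot vanish on a set of positive measure inside $B$, pinning down the nodal structure.
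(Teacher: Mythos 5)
Your proposal takes a genuinely different route from the paper, and that route has a real gap. The paper's proof is short and goes through the Caffarelli--Silvestre extension to $\mathbb R^{N+1}_+$, following the scheme in \cite{FLS16}: one shows, via a Courant-type nodal domain argument for the \emph{extended} (local, degenerate elliptic) eigenvalue problem, that the extension of $\phi_{2,\mathrm{rad}}$ has exactly two nodal domains; since the extension of a radial function is effectively a function of $(r,t)$ on a half-strip, a simple Jordan-curve/topology observation then caps the number of sign changes of its trace $\phi_{2,\mathrm{rad}}(r)$ at two (and at one when $N=1$). You mention the extension route in your opening sentence as an option, but then abandon it and argue instead by truncation plus a ``Faber--Krahn-type monotonicity,'' which is not the key lemma here -- the paper uses Courant, not Faber--Krahn.

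The gap in your chosen route is the step where you pass from ``$\phi^\pm$ has Rayleigh quotient $\leq\mu_{2,\mathrm{rad}}$'' to ``the supports $\{\phi>0\}$, $\{\phi<0\}$ are a ball and its complementary annulus.'' The truncation inequality $\cE(\phi^\pm,\phi^\pm)\leq \pm\cE(\phi,\phi^\pm)$ only tells you that $\phi^\pm$ are admissible test functions with small Rayleigh quotient; it does not make $\phi^\pm$ an eigenfunction of any problem localized to its nodal region -- in the nonlocal setting there is no such localization, because the operator sees the complementary region. Moreover, Faber--Krahn addresses shape optimization of first eigenvalues at fixed volume, whereas here the potential $p u^{p-1}$ is non-constant and the nodal sets are not free to vary, so even formally the invocation does not apply. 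Finally, your conclusion is internally inconsistent with the statement you want to prove: ``ball $+$ complementary annulus'' corresponds to \emph{exactly one} sign change, but the lemma only asserts \emph{at most two} for $N\geq 2$ (three radial nodal regions are allowed); if your argument were correct it would prove something too strong and in general false. The $N=1$ discussion (evenness forcing an odd competitor) also needs to be made precise; in the paper it again falls out of the nodal-domain count for the extension. To fix the proof, go back to the extension approach you mention at the outset: prove the two-nodal-domain bound for the extended radial eigenfunction as in \cite{FLS16} and then read off the sign-change count from the geometry of nodal domains in the $(r,t)$ half-strip.
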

\begin{proof}
We know that the eigenfunction $\phi_{2,rad}$ changes sign. Similarly as in \cite{FLS16} we can extend the eigenfunction $\phi_{2,rad}$ in $\mathbb (\mathbb R^{N+1})^+$ and,  using a variational argument in the spirit of Courant’s nodal domain theorem for the extended eigenvalue problem, we deduce that  the extension of $\phi_{2,rad}$ has $2$ nodal domains. From the bound on the number
of nodal domains for the extension, since we are in a radial setting,  we thus derive the bound for the number of sign changes of $\phi_{2,rad}$.  
\end{proof}

\subsection{Least-energy solutions}\label{section:leastEnergyBounded}
\begin{definition}\label{def:leastEnergyBounded}
We say that a solution $u$ of 
\eqref{P} is a least-energy solution 
if
\begin{align}\label{ueps:def}
J(u)=\inf_{\cN(\Omega)}J=:c_\Omega,
\end{align}
where  $J:H^s(\R^N)\to\R$ is the energy functional given by
\begin{align}\label{def:functionalJ}
    J(w):=\frac{1}{2}\cE(w,w)+ \frac{\lambda}{2}|w|_2^2-\frac{|w|^{p+1}_{p+1}}{p+1},
\end{align}
and the set $\cN(\Omega)$ is the Nehari manifold
\begin{align}\label{Nehari:def}
\cN(\Omega):=\{w\in \cH^s_0(\Omega)\backslash\{0\}\::\: J'(w)(w)=\mathcal E(w,w)+ \lambda|w|_2^2-|w|_{p+1}^{p+1}=0\}.
\end{align}

\end{definition}

\begin{proposition}\label{proposition:leastEnergyOmegaProperties}
Let $\Omega\subset\mathbb R^N$, $N\geq 1$ be a smooth  bounded domain, $s\in (0,1)$, $p\in (1,2^\star_s-1)$, and $\lambda>-\lambda_1^s(\Omega)$. 
Then~\eqref{P} has a least-energy solution  $u$.  Furthermore $u>0$ in $\Omega$,
\begin{align}\label{ceps:eq}
c_\Omega=\inf_{w\in\cH^s_0(\Omega)\backslash \{0\}}\sup_{t\geq 0} J(tw)>0,
\end{align}
and
\begin{equation}
\label{MorseofLeastEnergy}
m(u)=1.
\end{equation}
\end{proposition}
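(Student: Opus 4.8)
The plan is to establish the three assertions of Proposition~\ref{proposition:leastEnergyOmegaProperties} in turn: existence of a least-energy solution together with the mountain-pass characterization~\eqref{ceps:eq}, strict positivity, and finally the Morse index identity~\eqref{MorseofLeastEnergy}. For the existence part, since $p\in(1,2^\star_s-1)$ the embedding $\cH^s_0(\Omega)\hookrightarrow L^{p+1}(\Omega)$ is compact, so the standard Nehari manifold machinery applies: the functional $J$ restricted to $\cN(\Omega)$ is bounded below by a positive constant (because on $\cN(\Omega)$ one has $J(w)=(\tfrac12-\tfrac1{p+1})|w|_{p+1}^{p+1}$ and the Nehari constraint forces $|w|_{p+1}$ bounded away from $0$), and a minimizing sequence is bounded in $\cH^s_0(\Omega)$, hence converges weakly up to a subsequence, and strongly in $L^{p+1}$. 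One checks the weak limit is nonzero, lies on $\cN(\Omega)$, and, using weak lower semicontinuity of $\cE$ together with the strong $L^{p+1}$ convergence, realizes the infimum; a Lagrange multiplier argument shows the minimizer solves~\eqref{P}. The equality $c_\Omega=\inf_{w\neq0}\sup_{t\geq0}J(tw)$ is then routine: for each fixed $w\neq 0$ the map $t\mapsto J(tw)$ has a unique maximizer $t_w>0$ with $t_ww\in\cN(\Omega)$, and $\sup_{t\geq0}J(tw)=J(t_ww)$, which identifies the two infima. Positivity $u>0$ in $\Omega$ follows because $|u|\in\cN(\Omega)$ with $J(|u|)=J(u)$ (note $\cE(|u|,|u|)\le\cE(u,u)$), so $|u|$ is also a least-energy solution, hence a non-negative solution of~\eqref{P}, and then the fractional strong maximum principle (already invoked in the introduction) gives $|u|>0$ in $\Omega$; combined with the fact that $u$ itself cannot change sign, this yields $u>0$.

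The substantive point is the Morse index identity~\eqref{MorseofLeastEnergy}. By Lemma~\ref{eg vl neg} we already know $m(u)\geq 1$, so it remains to show $m(u)\leq 1$, i.e. that the quadratic form $\varphi\mapsto\mathcal B(\varphi,\varphi)$ is not negative definite on any two-dimensional subspace of $\cH^s_0(\Omega)$ (using Lemma~\ref{lemma:equivalenceMorse}). The classical strategy, which I would follow here, is the Nehari/minimax one: the least-energy level $c_\Omega$ coincides with the mountain-pass level, and for a mountain-pass critical point of a $C^2$ functional one has $m(u)\le 1$. Concretely, suppose for contradiction that $\mathcal B$ is negative definite on a $2$-dimensional subspace $V=\mathrm{span}\{\phi_1,\phi_2\}\subset\cH^s_0(\Omega)$ spanned by the first two eigenfunctions of~\eqref{eigenvalueProblematu}, with $\mu_1\le\mu_2<0$. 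One then uses $V$ to build a path in $\cN(\Omega)$ along which $J$ stays strictly below $c_\Omega$, contradicting $J\ge c_\Omega$ on $\cN(\Omega)$. The mechanism: for $v\in V$ near $u$, the projection $t_v v$-type construction keeps us on the Nehari manifold while a second-order Taylor expansion of $J$ at $u$ in the directions of $V$, namely $J(u+\varphi)=J(u)+\tfrac12\mathcal B(\varphi,\varphi)+o(\|\varphi\|^2)$, shows $J$ decreases in every direction of $V$; since $V$ is two-dimensional it must intersect the orthogonal complement of the tangent space to $\cN(\Omega)$ at $u$ nontrivially, producing a point on $\cN(\Omega)$ with energy $<c_\Omega$.

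I expect the Morse-index bound $m(u)\le 1$ to be the main obstacle, not because the idea is unfamiliar but because the clean mountain-pass-implies-Morse-index-one argument must be adapted carefully to the Nehari-manifold formulation and to the nonlocal bilinear form $\mathcal E$; in particular one must verify that $J$ is $C^2$ on $\cH^s_0(\Omega)$ (which uses $p<2^\star_s-1$ so that $w\mapsto|w|_{p+1}^{p+1}$ is $C^2$), that $u$ is a nondegenerate-enough critical point of $J|_{\cN(\Omega)}$ in the relevant sense, and that the tangent space $T_u\cN(\Omega)$ has codimension one with $\mathcal B$ nonnegative on it — equivalently, that $\cN(\Omega)$ is a natural constraint and the negative cone of $\mathcal B$ is transverse to it. An efficient way to package this is: show directly that if $W\subset\cH^s_0(\Omega)$ is a subspace of dimension $\ge 2$ on which $\mathcal B<0$, then one can select $w\in W$ with $\int_\Omega \big(\text{Nehari derivative constraint}\big)$ satisfied after scaling, i.e. $t_w w\in\cN(\Omega)$ and $w\not\parallel u$, and compute $J(t_ww)<J(u)=c_\Omega$ using that $\sup_{t}J(tw)$ is governed by the same sign information; the strict inequality comes from $\mathcal B$ being negative on the plane through $u$ and $w$. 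This contradiction forces $m(u)=1$. The remaining details — boundedness of minimizing sequences, the uniqueness of the Nehari scaling parameter, and the $C^2$ regularity of $J$ — are standard and I would only sketch them.
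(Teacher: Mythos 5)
Your overall strategy matches the paper's. The existence, positivity, and mountain-pass characterization~\eqref{ceps:eq} are handled in the paper exactly as you indicate (the paper only remarks that these are straightforward adaptations of the local case and then details only~\eqref{MorseofLeastEnergy}). For the Morse-index bound $m(u)\leq 1$, the paper's argument is precisely the codimension-one Nehari structure you invoke: $\mathcal N(\Omega)$ is a $C^1$ Hilbert manifold of codimension one because $G(v):=J'(v)(v)$ satisfies $G'(u)(u)=(1-p)\|u\|^2\neq 0$; since $u$ minimizes $J$ on $\mathcal N(\Omega)$ and is a free critical point of $J$, one has $\mathcal B(v,v)=J''(u)(v,v)\geq 0$ for all $v\in T_u\mathcal N(\Omega)$; and then Lemma~\ref{lemma:equivalenceMorse} gives $m(u)\leq 1$ because any subspace of dimension $\geq 2$ must meet the codimension-one subspace $T_u\mathcal N(\Omega)$ nontrivially, where $\mathcal B$ cannot be negative.

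One concrete error in your write-up: in your first packaging you claim that a two-dimensional subspace $V$ on which $\mathcal B$ is negative definite ``must intersect the \emph{orthogonal complement} of the tangent space to $\cN(\Omega)$ at $u$ nontrivially.'' That is the wrong space. The orthogonal complement of $T_u\cN(\Omega)$ is one-dimensional, and in an infinite-dimensional ambient space a generic two-dimensional $V$ need not meet a given line. The intersection that matters, and which is forced by dimension counting, is $V\cap T_u\cN(\Omega)\neq\{0\}$ — and on that intersection $\mathcal B$ would be simultaneously $\geq 0$ (minimality of $u$) and $<0$ (hypothesis on $V$), which is the contradiction. Your alternative ``efficient packaging'' (pick $w\in W$ with $w\not\parallel u$, rescale onto $\cN(\Omega)$, claim $J(t_w w)<c_\Omega$ because $\mathcal B$ is negative on the plane through $u$ and $w$) is also not tight as stated: it is not clear why negativity of $\mathcal B$ on $\mathrm{span}\{u,w\}$ controls the full fiber map $t\mapsto J(tw)$ in the way required, and in any case this construction is superfluous once the codimension-one intersection argument is in place. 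If you replace ``orthogonal complement of the tangent space'' by ``tangent space'' and drop the second packaging in favor of the dimension count plus Lemma~\ref{lemma:equivalenceMorse}, your proof coincides with the paper's.
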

\begin{proof}
This result is well known and can be shown with straightforward adaptations of the arguments used in the local case $s=1$.  Here we only recall the proof of~\eqref{MorseofLeastEnergy}. From~\eqref{firstEigenNegative}, we know that $m(u)\geq 1$. Next, we show the opposite inequality. To simplify the notation, let $\mathcal N:=\mathcal N(\Omega)$.
 Since a least-energy solution $u$ is a minimum of $J$ on the Nehari  manifold $\mathcal N$, we have that
 \begin{align}
 \mathcal B(v,v)=J''(u)(v,v)\geq 0\quad\mbox{ for all }v\in T_u\mathcal N,
 \end{align} 
 where $T_u\mathcal N$ denotes the tangent space to $\mathcal N$ at $u$. Furthermore, $\mathcal N$ is a $C^1$-Hilbert manifold of codimension $1$, indeed (see, for instance, \cite[Chapter 6.3]{A07})
 $\mathcal N$ is the zero level set of the $C^1$ functional $G:
 \cH_0^{s}(\Omega)\rightarrow \mathbb R$
 \[G(v):=J'(v)(v),\]
 and, for any $u\in\mathcal N$, we have that $G'(u)\neq 0$, since
 \[G'(u)(u):=J''(u)(u,u)+J'(u)(u)=J''(u)(u,u)=\mathcal B(u,u)=\|u\|^2-p|u|_{p+1}^{p+1}=(1-p)\|u\|^2<0.\]
 As a consequence, by Lemma~\ref{lemma:equivalenceMorse}, $m(u)\leq 1$.
 
 \begin{remark}[Symmetry and monotonicity of non-negative solutions]\label{remark:symmmetryU}
    Similarly to the observation in Remark~\ref{remark:symmmetryPhi1},   it is also well known that, when the domain $\Omega$  is convex in the $x_1$-direction and symmetric with respect to the hyperplane $H:=\left\lbrace x\in \mathbb R^N:\, x_1=0\right\rbrace$, then 
    any non-negative solution $u$ of~\eqref{P} has reflectional symmetry with respect to $H$ and is strictly monotone decreasing in $|x_1|$ for $x=(x_1,\ldots,x_N)\in\Omega$ (see  \cite[Corollary 1.2]{JW14}). In particular this holds true for the least-energy solutions $u$ of~\eqref{P}, since they are non-negative.
 \end{remark}
\end{proof}

\subsection{Ground-state solutions in $\mathbb R^N$}\label{section:groundStateRN}
Here, we consider the problem
\begin{align}\label{PQlambda}
    (-\Delta)^s Q + \lambda Q = Q^p\quad \text{ in }\R^N,\qquad Q\in H^s(\R^N),\qquad Q>0,
\end{align}
where $\lambda>0.$  For a solution $Q\in H^s(\R^N)$ of~\eqref{PQlambda} we let $L_+$ denote the corresponding linearized operator given by
\begin{align}\label{defLinearizedOperator}
    L_+:=(-\Delta)^s+\lambda-p|Q|^{p-1}
\end{align}
acting on $L^2(\mathbb R^N)$. 
\begin{definition}
    We say that a nontrivial solution $Q\geq 0$ of~\eqref{PQlambda}
is a ground state if it has Morse index 1, namely, if the linearized operator $L_+$ in $Q$ defined in~\eqref{defLinearizedOperator} has exactly one strictly negative eigenvalue (counted with multiplicity). 
\end{definition} 
\begin{proposition}[Existence]\label{prop:existence}
Let $N\geq1$, $s\in (0,1)$, $p\in (1,2^\star_s-1)$, and $\lambda>0$. Then~\eqref{PQlambda} has a ground state solution $Q$. Furthermore 
\begin{align}\label{c:eq}
c:=J(Q)=\inf_{\cN}J,
\end{align}
where the definition of $J$ is given in~\eqref{def:functionalJ} and $\mathcal N$ is the Nehari manifold 
\begin{align}\label{NehariRN}
\cN:=\{w\in H^s(\R^N)\backslash\{0\}\::\: J'(w)(w)=\|w\|^2-|w|_{p+1}^{p+1}=0\}.
\end{align}
\end{proposition}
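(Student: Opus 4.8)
The plan is to obtain $Q$ as a minimizer of the energy $J$ on the Nehari manifold $\mathcal N$ of~\eqref{NehariRN} and then verify that such a minimizer has Morse index one. First I would record the standard Nehari/mountain-pass facts: for every $w\in H^s(\R^N)\setminus\{0\}$ the fibering map $t\mapsto J(tw)$ on $[0,\infty)$ has a unique maximum at some $t_w>0$ with $t_w w\in\mathcal N$ (because $\|w\|^2>0$ and $p+1>2$), so that $c=\inf_{\mathcal N}J=\inf_{w\neq 0}\sup_{t\ge0}J(tw)>0$; the strict positivity of $c$ follows from the Sobolev embedding $H^s\hookrightarrow L^{p+1}$ since $p+1<2^\star_s$, which also gives $\inf_{\mathcal N}\|w\|>0$. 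One then takes a minimizing sequence $(w_n)\subset\mathcal N$; since on $\mathcal N$ one has $J(w_n)=(\tfrac12-\tfrac{1}{p+1})\|w_n\|^2$, the sequence is bounded in $H^s(\R^N)$.

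The main obstacle is the lack of compactness: $H^s(\R^N)\hookrightarrow L^{p+1}(\R^N)$ is \emph{not} compact because of translation invariance, so a minimizing sequence may vanish or split. I would handle this by a concentration–compactness / Lions-type lemma: either (i) replace $w_n$ by $|w_n|$ (which does not increase $\mathcal E$, by the standard inequality $\bigl(|w|(x)-|w|(y)\bigr)^2\le (w(x)-w(y))^2$, and keeps it on $\mathcal N$ after rescaling), so we may assume $w_n\ge0$; and (ii) use that vanishing is ruled out because $|w_n|_{p+1}$ is bounded below on $\mathcal N$, while dichotomy is ruled out by the strict subadditivity of the level $c$ under splitting of mass (a consequence of the scaling structure of the problem). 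The cleanest route is Lions' lemma: if $\sup_{y\in\R^N}\int_{B_1(y)}|w_n|^2\,dx\to0$ then $w_n\to0$ in $L^{p+1}$, contradicting $\inf_{\mathcal N}|w|_{p+1}>0$; hence there are translations $y_n$ and $\delta>0$ with $\int_{B_1(y_n)}|w_n|^2\ge\delta$. Passing to $\tilde w_n:=w_n(\cdot+y_n)$ and to a weak limit $Q\ge0$, $Q\not\equiv0$, weak lower semicontinuity of $\|\cdot\|$ together with the Brezis–Lieb lemma for the $L^{p+1}$ term forces $\tilde w_n\to Q$ strongly and $J(Q)=c$, $Q\in\mathcal N$. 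By the Lagrange multiplier rule on $\mathcal N$ (whose $C^1$ structure and transversality, $G'(Q)(Q)=(1-p)\|Q\|^2<0$, are exactly as in the proof of Proposition~\ref{proposition:leastEnergyOmegaProperties}), $Q$ solves~\eqref{PQlambda} weakly; elliptic regularity for the fractional Laplacian gives $Q\in C^{2s+\alpha}$ and the fractional strong maximum principle gives $Q>0$ in $\R^N$.

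Finally, to see that $Q$ is a ground state I would show $m(Q)=1$ by the same argument as for~\eqref{MorseofLeastEnergy}: since $Q$ minimizes $J$ on the codimension-one $C^1$-manifold $\mathcal N$, the quadratic form $v\mapsto\mathcal B(v,v)=J''(Q)(v,v)$ (here $\mathcal B$ built from $Q$ as in~\eqref{BilinearForm}, with $\Omega$ replaced by $\R^N$) is nonnegative on the tangent space $T_Q\mathcal N$, which has codimension one, so by Lemma~\ref{lemma:equivalenceMorse} $m(Q)\le1$; and $m(Q)\ge1$ because $J''(Q)(Q,Q)=(1-p)\|Q\|^2<0$. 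Hence $L_+$ has exactly one negative eigenvalue and $Q$ is a ground state. I expect the only genuinely delicate point to be ruling out dichotomy in the concentration–compactness step; everything else is a routine transcription of the classical local argument to the fractional setting, using that $\mathcal E$ is a nonnegative, translation-invariant quadratic form that is weakly lower semicontinuous on $H^s(\R^N)$.
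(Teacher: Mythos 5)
Your proposal is correct and follows essentially the same route as the paper, which does not spell out the proof but instead cites the standard concentration–compactness machinery in the fractional setting (\cite{l84,palatucci,S13} and related references) and states that one adapts the local argument with minor changes; your Nehari-manifold minimization, Lions' vanishing lemma plus translation to recover a nontrivial weak limit, and Brezis–Lieb/weak lower-semicontinuity step are exactly that adaptation. Your derivation of the Morse index one via the natural-constraint structure of $\cN$ likewise mirrors the paper's own argument for~\eqref{MorseofLeastEnergy} in Proposition~\ref{proposition:leastEnergyOmegaProperties}.
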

The proof follows by adjusting the arguments in the local case $s=1$ with only minor changes using concentration-compactness in the fractional framework, see \cite{l84,palatucci}. 
 We also refer to \cite{Felmer_Quaas_Tan_2012, DipierroPalatucciValdinoci_2013, FLS16, Frank_Lenzmann_2013, S13} for related results.

\begin{proposition}[Uniqueness and nondegeneracy]\label{prop:nongeneracyLimitProblem}
Let $\lambda>0$ and $Q\geq 0$ be a ground state solution of~\eqref{PQlambda}.
Then it is unique (up to translations), strictly positive,  radially symmetric, and strictly decreasing in the radial variable. Furthermore, $Q$ is nondegenerate, in the sense that
\begin{align}\label{kernelLimitProblem}
    \operatorname{ker}(L_+)=\operatorname{span}(\partial_{x_1}Q,\ldots, \partial_{x_N}Q).
\end{align}
\end{proposition}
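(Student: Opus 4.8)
The plan is to establish Proposition~\ref{prop:nongeneracyLimitProblem} by reducing everything to the well-documented analysis of the fractional ground state. The two assertions (uniqueness up to translations with qualitative properties, and the nondegeneracy statement~\eqref{kernelLimitProblem}) are exactly the content of the work of Frank, Lenzmann and Silvestre~\cite{FLS16}, so the core of the proof is a citation together with the small bookkeeping needed to match hypotheses. First I would recall that any nontrivial nonnegative $H^s(\R^N)$-solution $Q$ of~\eqref{PQlambda} with $\lambda>0$ is, by the fractional strong maximum principle, strictly positive; then by the fractional moving-plane / symmetrization results (see, e.g., \cite{FLS16} and the references therein) every positive decaying solution is, up to translation, radially symmetric and strictly decreasing in $|x|$. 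For a ground state we additionally know it has Morse index $1$; the classical argument (a ground state minimizes $J$ on the Nehari manifold, which has codimension one, exactly as in the proof of Proposition~\ref{proposition:leastEnergyOmegaProperties}) shows that the minimal-energy positive solution indeed has Morse index~$1$, so the notion is nonvacuous.

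Next I would invoke the uniqueness theorem of~\cite{FLS16}: among all positive $H^s$-solutions of $(-\Delta)^sQ+\lambda Q=Q^p$ (with $1<p<2^\star_s-1$, $0<s<1$, $N\ge 1$), the solution is unique modulo translations. A convenient way to reduce to the normalized form treated there is a scaling: if $Q$ solves~\eqref{PQlambda}, set $Q(x)=\lambda^{\frac{1}{p-1}}\widetilde Q(\lambda^{\frac{1}{2s}}x)$, so that $\widetilde Q$ solves $(-\Delta)^s\widetilde Q+\widetilde Q=\widetilde Q^{\,p}$; uniqueness and nondegeneracy for $\widetilde Q$ then transfer back to $Q$, since the linearized operators are conjugate under the same rescaling and translations are preserved. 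This rescaling step is routine but worth stating so that the cited normalization in~\cite{FLS16} applies verbatim.

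For the nondegeneracy~\eqref{kernelLimitProblem}, I would argue as follows. Differentiating~\eqref{PQlambda} in $x_j$ shows $\partial_{x_j}Q\in\ker(L_+)$ for each $j$, and these $N$ functions are linearly independent because $Q$ is radial and strictly monotone (so $\partial_{x_j}Q=\tfrac{x_j}{|x|}Q'(|x|)$ with $Q'<0$ on $(0,\infty)$), giving $\operatorname{span}(\partial_{x_1}Q,\dots,\partial_{x_N}Q)\subseteq\ker(L_+)$. The reverse inclusion is the hard, nontrivial part and is precisely the main theorem of~\cite{FLS16}: decomposing $\ker(L_+)$ into spherical-harmonic sectors, one shows that the radial sector contributes nothing (using that $Q$ has Morse index $1$, so the radial part of $L_+$ has exactly one negative eigenvalue and trivial kernel — this is where the delicate Perron–Frobenius / oscillation analysis for the fractional operator, carried out via the Caffarelli–Silvestre extension in~\cite{FLS16}, enters), the $\ell=1$ sector contributes exactly the span of the $\partial_{x_j}Q$, and all sectors with $\ell\ge 2$ contribute nothing by a monotonicity argument in the angular momentum. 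I expect the genuine obstacle, were one to prove this from scratch, to be exactly this spectral decomposition in the nonlocal setting; since the paper is permitted to cite~\cite{FLS16}, the proof reduces to assembling the scaling reduction and quoting that theorem, and I would present it in that compact form.

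\begin{proof}
All the statements are contained in the work of Frank, Lenzmann and Silvestre \cite{FLS16}; we briefly indicate how to match the hypotheses. By the scaling $Q(x)=\lambda^{\frac{1}{p-1}}\widetilde Q(\lambda^{\frac{1}{2s}}x)$ we may assume $\lambda=1$, since this transformation maps solutions of~\eqref{PQlambda} to solutions of $(-\Delta)^s\widetilde Q+\widetilde Q=\widetilde Q^{\,p}$, conjugates the linearized operators, and preserves translations. By the fractional strong maximum principle, any nontrivial $Q\ge 0$ solving~\eqref{PQlambda} is strictly positive; by the symmetry results for positive decaying solutions (see \cite{FLS16} and the references therein), $Q$ is, up to a translation, radially symmetric and strictly decreasing in $|x|$. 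The uniqueness (modulo translations) of the ground state, together with the nondegeneracy identity $\operatorname{ker}(L_+)=\operatorname{span}(\partial_{x_1}Q,\dots,\partial_{x_N}Q)$, is then exactly \cite[Theorem 3 and Theorem 4]{FLS16} (see also \cite{Frank_Lenzmann_2013} for $N=1$). Finally, $\partial_{x_j}Q\in\operatorname{ker}(L_+)$ for every $j$ by differentiating~\eqref{PQlambda}, and these functions are linearly independent because $\partial_{x_j}Q(x)=\frac{x_j}{|x|}Q'(|x|)$ with $Q'<0$ on $(0,\infty)$, so the inclusion $\operatorname{span}(\partial_{x_1}Q,\dots,\partial_{x_N}Q)\subseteq\operatorname{ker}(L_+)$ is elementary; the reverse inclusion is the nontrivial part supplied by \cite{FLS16}.
\end{proof}
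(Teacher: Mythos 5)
Your proof is correct and follows essentially the same route as the paper: reduce to $\lambda=1$ by the rescaling $Q(x)=\lambda^{1/(p-1)}\widetilde Q(\lambda^{1/(2s)}x)$ and then invoke \cite{Frank_Lenzmann_2013} for $N=1$ and \cite[Theorems 3 and 4]{FLS16} for $N\geq 2$. The additional commentary you give on the spherical-harmonic decomposition inside the Frank--Lenzmann--Silvestre argument is accurate but not needed, since the paper simply cites that result.
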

\begin{proof}
The case $\lambda=1$ has been proved in \cite{Frank_Lenzmann_2013} (for $N=1$) and \cite[Theorems 3 and 4]{FLS16} (for $N\geq 2$), the case $\lambda>0$,  $\lambda\neq 1$ then follows by a scaling argument.
\end{proof}

\section{A symmetry result for the linearized problem}\label{sec:symm}
Let us consider the hyperplane 
\[H:=\{x\in\mathbb R^N\ :\ x_1=0\}.\]
Let $\sigma: \mathbb R^N\to\mathbb R^N$ denote the reflection with respect to the hyperplane $H$, \emph{i.e.}
\begin{equation}\label{def:sigma}\sigma (x)=(-x_1,x_2,\ldots,x_N) \quad\text{ for any }\quad x=(x_1,x_2,\ldots,x_N)\in\mathbb R^N.\end{equation}

\begin{definition}
A function $w:\mathbb R^N\to\mathbb R$ is said to be: \begin{itemize}
    \item symmetric with respect to the hyperplane $H$ if
\begin{align*}
w(\sigma(x))=w(x)\quad  \text{ for } x\in \mathbb R^N.  
\end{align*}
\item
antisymmetric with respect to the hyperplane  $H$ if
\begin{align*}
w(\sigma(x))=-w(x)\quad  \text{ for } x\in \mathbb R^N.
\end{align*}
\end{itemize}
\end{definition}

In this Section we prove Theorem~\ref{sym}, namely that if the domain $\Omega$  is convex in the $x_1$-direction and symmetric with respect to $H$ then  any second eigenfunction $v$ of the  linearized eigenvalue problem at $u$:
\begin{equation}\label{LinearizzatoInSec3}
(-\Delta)^{s} v+\lambda v-pu^{p-1}v=\mu_2 v \quad \text{ in }\Omega,\qquad v=0\quad  \text{ in }\mathbb R^N\backslash\Omega.
\end{equation}
necessarily inherits the symmetry of the domain, when the second eigenvalue $\mu=\mu_2\leq 0$. 
\\

It is well know that any non-negative solution $u$ of~\eqref{P} and the first eigenfunction $\phi_1$  of the linearized eigenvalue problem~\eqref{eigenvalueProblematu} at $u$ are symmetric with respect to $H$ (see Remarks~\ref{remark:symmmetryPhi1} and~\ref{remark:symmmetryU}).
We stress that, unlike $u$ and $\phi_1$, second eigenfunctions  $v$  are sign-changing (see Lemma~\ref{eg vl neg}), hence different arguments must be developed in order to prove their symmetry.\\

Our proof  exploits nonlocal versions of the strong maximum principle, of the Hopf Lemma and of integration by parts formulas, following some ideas used in \cite{DamascelliGrossiPacellaAIHP1999} in the local case. Nevertheless since in the fractional framework the strong maximum principles and the Hopf Lemma 
 for sign-changing solutions is known to hold only when they are antisymmetric  (see \cite{JarohsWeth2016,FallJarohs2015}), we have to introduce new ideas.
More precisely, instead of working directly with a general second eigenfunction $v$, we first rearrange it by considering its polarization $Pv$ with respect to the hyperplane $H$.  Following some arguments similar as in \cite{Benedikt}, based on the introduction of a new variational characterization for the second eigenfunctions, we can show that also $Pv$ solves~\eqref{eq:secondEigen}.
Finally we obtain an antisymmetric second eigenfunction by taking the difference between $Pv$ evaluated at a point and at the point obtained by reflection with respect to the hyperplane $H$. This function turns out to have all the properties needed to conclude the proof.\\

The section is organized as follows:
\begin{itemize}
\item In Section~\ref{subsection:seconEigenfunction} we introduce the variational characterization for the second eigenfunction of the eigenvalue problem~\eqref{eigenvalueProblematu}, extending to our linearized problem the characterization introduced in  \cite{Benedikt} for the second eigenfunction of $(-\Delta)^s$.
\item In Section~\ref{subsection:Polarization} we recall the definition of the polarization of a function with respect to the hyperplane $H$. Then, when $\Omega$ is symmetric with respect to $H$ and convex  in the $x_1$-direction, we prove that  the polarization of a second eigenfunction is a second eigenfunction,
this is contained in Proposition~\ref{prop:polarizationIsEigenfunction}.
\item Finally, in Section~\ref{subsection:proofOfSymmetryResult} we conclude the proof Theorem~\ref{sym}.
\end{itemize}

\medskip

As a consequence of Theorem~\ref{sym}, we have the following result in the symmetric domains which satisfy the assumption $(\mathcal S)$ stated in the Introduction.
\begin{corollary} \label{corollary: symmetryOfKernel_CylindricalDomains} Assume that $\Omega$ satisfies $(\mathcal S)$ and let $u$ be a non-negative solution of~\eqref{Peps}.
If $\mu_2\leq 0$, then any nontrivial solution $v$ to~\eqref{LinearizzatoInSec3} is (sign-changing and) $G$-invariant.
\end{corollary}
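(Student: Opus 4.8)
The plan is to deduce the $G$-invariance of any nontrivial solution $v$ of \eqref{LinearizzatoInSec3} (with $\mu_2\le 0$) by applying Theorem~\ref{sym} repeatedly, once for each of the hyperplanes whose reflections generate the relevant coordinate symmetries, and then upgrading the resulting collection of reflection symmetries to full $G$-invariance using that $G$ is the rotation group $O(m_1)\times\cdots\times O(m_\ell)$. The key point is that a function on $\R^N$ that is invariant under the reflection $x_i\mapsto -x_i$ for every coordinate direction $i=1,\dots,N$ within a block $\R^{m_j}$, and also invariant under the coordinate transpositions within that block, is automatically $O(m_j)$-invariant when it is additionally known to be "block-radial" — but here we get the full statement more directly because Theorem~\ref{sym} already hands us reflection symmetry with respect to \emph{every} hyperplane through the origin that is a coordinate hyperplane of the ambient space, and these reflections together with the hypothesis that $\Omega$ is $G$-invariant suffice.

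Concretely, first I would fix an index $i\in\{1,\dots,N\}$ and consider the hyperplane $H_i:=\{x\in\R^N:\ x_i=0\}$. Since $\Omega$ satisfies $(\mathcal S)$, it is convex in the $x_i$-direction and, being $G$-invariant, it is in particular symmetric with respect to $H_i$ (reflection in $x_i$ lies in the $O(m_j)$-factor containing the $i$-th coordinate). After relabeling coordinates so that $x_i$ plays the role of $x_1$, Theorem~\ref{sym} applies verbatim: since $\mu_2\le 0$, any nontrivial solution $v$ of \eqref{LinearizzatoInSec3} is symmetric with respect to $H_i$. Repeating this for each $i=1,\dots,N$ shows that $v$ is invariant under all $N$ coordinate reflections. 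Next I would use that every element of $O(m_j)$ can be generated by such reflections together with rotations in coordinate $2$-planes; for the latter one notes that a rotation by angle $\theta$ in the $(x_i,x_k)$-plane with $i,k$ in the same block is a composition of two reflections in hyperplanes of the form $\{x:\ \cos\alpha\, x_i+\sin\alpha\, x_k=0\}$, and one checks these hyperplanes also satisfy the hypotheses of Theorem~\ref{sym} relative to $\Omega$ (here convexity of $\Omega$ in every direction of the block, which follows from $(\mathcal S)$ together with $G$-invariance, is what is needed), so $v$ is symmetric with respect to each of them and hence invariant under the rotation. Taking products over the blocks $j=1,\dots,\ell$ gives $G$-invariance of $v$. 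The sign-changing assertion is immediate from Lemma~\ref{eg vl neg}, since $\mu_2\le 0$ forces $v$ to be a second eigenfunction, which is always sign-changing.

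The main obstacle is the step where one passes from reflection symmetry with respect to a family of hyperplanes to full $O(m_j)$-invariance: one must be careful that Theorem~\ref{sym} really does apply to the "tilted" hyperplanes $\{x:\ \cos\alpha\, x_i+\sin\alpha\, x_k=0\}$ and not merely to the coordinate ones. This requires observing that $(\mathcal S)$, which posits convexity of $\Omega$ in each of the $N$ coordinate directions together with $G$-invariance, in fact forces $\Omega$ to be convex in \emph{every} direction lying inside a block $\R^{m_j}$ (apply a rotation of $G$ to align an arbitrary such direction with a coordinate axis), and that $\Omega$ is symmetric with respect to every such tilted hyperplane (again by $G$-invariance). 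Once this geometric observation is in place, Theorem~\ref{sym} can be invoked for each tilted hyperplane and the argument closes. Alternatively, and perhaps more cleanly, one can avoid tilted hyperplanes altogether: invariance under all coordinate reflections shows $v$ depends on $|x_j|$-type data, and one then shows directly that, restricted to a block, $v$ must be radial in that block by applying Theorem~\ref{sym} along a dense set of directions, concluding $G$-invariance; but the rotation-as-product-of-reflections route is the most economical.
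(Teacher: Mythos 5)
Your proof is correct, and since the paper states this corollary without proof (merely noting it is ``a consequence of Theorem~\ref{sym}''), your argument is exactly the natural filling-in that the authors intend: apply Theorem~\ref{sym}, after rotating coordinates, to every hyperplane through the origin whose normal lies in a single block $\R^{m_j}$ (the hypotheses hold because $G$-invariance transports the coordinate-direction convexity of $(\mathcal S)$ to every such direction and supplies the required reflectional symmetry of both $\Omega$ and $u$), and then invoke the fact that these reflections generate $O(m_j)$. Your caveat about the tilted hyperplanes is well placed and necessary, since the coordinate reflections alone give only $\{\pm1\}^{m_j}$-invariance, which would not suffice for the compact embedding of $H^s_G(\R^N)$ used later in Section~\ref{section:uniquenessLargeDomain}.
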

In particular, in the case when $\Omega$ is a ball, we deduce Corollary \ref{corollary:radialKernel} in the Introduction.
Notice that the last sentences in Corollary~\ref{corollary:radialKernel}, concerning the number of changes of sign of $v$, is a consequence of Lemma~\ref{lemma:SignAlongBoundary}. \\

Corollaries~\ref{corollary:radialKernel} and~\ref{corollary: symmetryOfKernel_CylindricalDomains} are the main tools to prove the nondegeneracy  for least-energy solutions, in the case when $\Omega$ is a large ball or a more general large symmetric domain satisfying assumption $(\mathcal S)$, respectively (see Section~\ref{section:uniquenessLargeDomain}).

\subsection{Second eigenfunctions: a variational characterization }\label{subsection:seconEigenfunction}
In this section $\Omega\subset\mathbb R^N$, $N\geq 1$, is any smooth bounded domain. Let us observe that if $\phi_2$ denotes the second eigenfunction of the eigenvalue problem~\eqref{eigenvalueProblematu}, then $\pm\phi_2^\pm\in\mathcal H^s_0(\Omega)\setminus\{0\}$  and so it  can be used as test functions in the weak formulation of the eigenvalue problem with $\mu=\mu_2$, obtaining
\begin{equation}\label{equalitiesSecondaEigen}
\mu_2\int_{\Omega}(\phi_2^+)^2 \ dx=\mathcal B (\phi_2,\phi_2^+)\qquad\mbox{ and }\qquad \mu_2\int_{\Omega}(\phi_2^-)^2\ dx=-\mathcal B (\phi_2,\phi_2^-).
\end{equation}
Inspired by the ideas in \cite{Benedikt}, we introduce the following characterization for the second eigenfunction of the eigenvalue problem~\eqref{eigenvalueProblematu}.
\begin{lemma}\label{lemma:analog2.1}
Let $v\in \cH^s_0(\Omega)$ be such that $v^{\pm}\not\equiv 0$ in $\Omega$,
\begin{equation}\label{2Inequal}
\mu_2\int_{\Omega}(v^+)^2 \ dx\geq\mathcal B (v,v^+),
\end{equation}
and 
\begin{equation}\label{2InequalNegativepart}
\mu_2\int_{\Omega}(v^-)^2\ dx\geq-\mathcal B (v,v^-),
\end{equation}
where $\mathcal B$ is the bilinear form in~\eqref{BilinearForm} and $v^{\pm}$ are the positive and negative part of $v$, see~\eqref{defpositiveNegativaPart}.

Then $v $ is an eigenfunction associated to the eigenvalue $\mu_2$ for the linearized problem~\eqref{eigenvalueProblematu} at $u$ and equalities hold in~\eqref{2Inequal}.
\end{lemma}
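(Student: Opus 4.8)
The plan is to mimic the classical argument of \cite{Benedikt} adapted to the bilinear form $\mathcal B$. The starting observation is that the two hypotheses \eqref{2Inequal} and \eqref{2InequalNegativepart} are sharp: adding them, and using the decomposition $v=v^+-v^-$ together with bilinearity of $\mathcal B$, one obtains
\[
\mu_2\int_\Omega\bigl((v^+)^2+(v^-)^2\bigr)\,dx\ \geq\ \mathcal B(v,v^+)-\mathcal B(v,v^-)=\mathcal B(v,v).
\]
Since $(v^+)^2+(v^-)^2=v^2$ pointwise, this says $\mathcal B(v,v)\le \mu_2\int_\Omega v^2$. On the other hand, the variational characterization \eqref{variationalCharctmu2} of $\mu_2$ applies as soon as $v$ is $\mathcal B$-orthogonal (equivalently $L^2$-orthogonal, since $\phi_1$ is an eigenfunction) to the first eigenfunction $\phi_1$; so the crux is to first establish that $v\perp \phi_1$ in $L^2(\Omega)$, and then the reverse inequality $\mathcal B(v,v)\ge \mu_2\int_\Omega v^2$ is automatic, forcing equality throughout, which by the standard fact that minimizers of \eqref{variationalCharctmu2} are eigenfunctions yields that $v$ is an eigenfunction for $\mu_2$.

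First I would prove the orthogonality $\int_\Omega \phi_1 v=0$. Write $v=v^+-v^-$ and suppose for contradiction that $c:=\int_\Omega\phi_1 v\neq 0$. The idea is to build a competitor of the form $w=\alpha v^+ -\beta v^-$ with $\alpha,\beta>0$ chosen so that $w\perp\phi_1$; this is possible since $\phi_1>0$ in $\Omega$ gives $\int_\Omega\phi_1 v^\pm>0$, so one can solve $\alpha\int_\Omega\phi_1 v^+=\beta\int_\Omega\phi_1 v^-$ with $\alpha,\beta>0$. Using \eqref{2Inequal}, \eqref{2InequalNegativepart} and the fact that $\mathcal B(v,v^+)=\mathcal B(v^+,v^+)-\mathcal B(v^-,v^+)$ with $\mathcal B(v^-,v^+)=\mathcal E(v^-,v^+)\le 0$ (the local terms vanish since $v^+v^-\equiv 0$, and the nonlocal Dirichlet form of two functions with disjoint supports is nonpositive — this is where the nonlocal structure differs from the local case but still cooperates), one gets
\[
\mathcal B(w,w)=\alpha^2\mathcal B(v^+,v^+)+\beta^2\mathcal B(v^-,v^-)-2\alpha\beta\,\mathcal E(v^+,v^-)
\le \mu_2\Bigl(\alpha^2\!\int_\Omega(v^+)^2+\beta^2\!\int_\Omega(v^-)^2\Bigr)+2\alpha\beta|\mathcal E(v^+,v^-)|\cdot(\ldots),
\]
and here one must be careful: a direct imitation of the local proof exploits that the cross term $-2\alpha\beta\mathcal E(v^+,v^-)\le 0$ has the \emph{favorable} sign, so in fact
\[
\mathcal B(w,w)\ \le\ \alpha^2\,\mu_2\!\int_\Omega(v^+)^2\,dx+\beta^2\,\mu_2\!\int_\Omega(v^-)^2\,dx\ -\ 2\alpha\beta\,\mathcal E(v^+,v^-),
\]
and comparing with $\int_\Omega w^2=\alpha^2\int(v^+)^2+\beta^2\int(v^-)^2$ one would like to conclude $\mathcal B(w,w)\le\mu_2\int_\Omega w^2$ with \emph{strict} inequality unless $\mathcal E(v^+,v^-)=0$; combined with $w\perp\phi_1$ this contradicts \eqref{variationalCharctmu2} unless $w$ is itself a second eigenfunction, in which case a bootstrap (eigenfunctions are smooth, a sign-changing eigenfunction cannot have $\mathcal E(v^+,v^-)=0$ in the nonlocal setting unless one of $v^\pm\equiv0$) closes the argument. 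Running this dichotomy carefully gives $c=0$.

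Once $v\perp\phi_1$ is known, \eqref{variationalCharctmu2} gives $\mathcal B(v,v)\ge\mu_2\int_\Omega v^2$, which together with the reverse inequality derived above yields $\mathcal B(v,v)=\mu_2\int_\Omega v^2$; hence $v$ realizes the minimum in \eqref{variationalCharctmu2} and is therefore an eigenfunction associated to $\mu_2$. Finally, testing the eigenvalue equation $(-\Delta)^s v+\lambda v-pu^{p-1}v=\mu_2 v$ against $v^+$ recovers $\mu_2\int_\Omega(v^+)^2=\mathcal B(v,v^+)$ as in \eqref{equalitiesSecondaEigen}, i.e.\ equality holds in \eqref{2Inequal} (and likewise in \eqref{2InequalNegativepart}). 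The main obstacle I anticipate is the orthogonality step: ensuring the sign of the nonlocal cross term $\mathcal E(v^+,v^-)$ is genuinely exploited in the right direction and that the borderline case (where $w$ happens to be an eigenfunction) is correctly excluded, since unlike the local Dirichlet integral the fractional form does not vanish on functions with disjoint supports and one must track it quantitatively.
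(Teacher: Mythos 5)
Your overall strategy coincides with the paper's (which follows Benedikt): build the $\phi_1$-orthogonal competitor $w=\alpha v^+-\beta v^-$, test it against the variational characterization \eqref{variationalCharctmu2}, exploit $\mathcal E(v^+,v^-)\le 0$ (indeed $<0$ since $v^\pm\not\equiv 0$ and the kernel is strictly positive) to pin down $\alpha=\beta$, and read off the conclusion. The paper packages the cross-term estimate as the inequality $\mathcal E(v,v^+-\alpha_0^2v^-)\ge\mathcal E(v^+-\alpha_0v^-,v^+-\alpha_0v^-)$, which is exactly equivalent to $-(1-\alpha_0)^2\mathcal E(v^+,v^-)\ge 0$, and then chains three Rayleigh-quotient inequalities $\mu_2\le R_1\le R_2\le\mu_2$; it never needs to first establish $v\perp\phi_1$ as a separate step.

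However, there is a genuine gap in your key estimate. You write that the cross term $-2\alpha\beta\,\mathcal E(v^+,v^-)\le 0$ ``has the favorable sign,'' but this is a sign error: since $\mathcal E(v^+,v^-)\le 0$, that term is $\ge 0$ and pushes $\mathcal B(w,w)$ \emph{up}, the wrong way. Your second display,
\[
\mathcal B(w,w)\ \le\ \mu_2\!\int_\Omega w^2\,dx\ -\ 2\alpha\beta\,\mathcal E(v^+,v^-),
\]
therefore does \emph{not} yield $\mathcal B(w,w)\le \mu_2\int_\Omega w^2$, and the argument stalls. The fix is that the per-term bounds must retain the $\mathcal E$-contribution: from \eqref{2Inequal} and \eqref{2InequalNegativepart} one gets
\[
\mathcal B(v^+,v^+)\le \mu_2\!\int_\Omega(v^+)^2\,dx+\mathcal E(v^+,v^-),
\qquad
\mathcal B(v^-,v^-)\le \mu_2\!\int_\Omega(v^-)^2\,dx+\mathcal E(v^+,v^-),
\]
and substituting these into $\mathcal B(w,w)=\alpha^2\mathcal B(v^+,v^+)+\beta^2\mathcal B(v^-,v^-)-2\alpha\beta\,\mathcal E(v^+,v^-)$ gives
\[
\mathcal B(w,w)\ \le\ \mu_2\!\int_\Omega w^2\,dx\ +\ (\alpha-\beta)^2\,\mathcal E(v^+,v^-)\ \le\ \mu_2\!\int_\Omega w^2\,dx,
\]
which is the inequality you need. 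Equality throughout then forces $(\alpha-\beta)^2\mathcal E(v^+,v^-)=0$, and since $\mathcal E(v^+,v^-)<0$ strictly under the standing hypothesis $v^\pm\not\equiv 0$, you get $\alpha=\beta$ directly — no ``bootstrap through regularity of eigenfunctions'' is needed or relevant. Finally, the detour of first proving $\int_\Omega v\phi_1=0$ by contradiction is unnecessary: once you know $w$ is a second eigenfunction and $\alpha=\beta$, then $w$ is a scalar multiple of $v$, so $v$ itself is a second eigenfunction, which is the paper's more direct route to the conclusion.
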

\begin{proof} The proof is similar to the one of \cite[Lemma 2.1]{Benedikt}, we write its main arguments for completeness.
Let $\alpha_0>0$ be such that 
\begin{equation}
\int_{\Omega}\left( v^+-\alpha_0v^-\right) \phi_1\, dx=0,
\end{equation}
where $\phi_1>0$ in $\Omega$ is the first eigenfunction associated to the linearized problem~\eqref{eigenvalueProblematu} at $u$ with eigenvalue $\mu_1$
(such an $\alpha_0$ trivially exists since both $\int_{\Omega}v^{\pm}\phi_1>0$).

Since $v^{\pm}\in \mathcal H^s_0(\Omega)$, then 
$v^+-\alpha_0v^-\in\mathcal H^s_0(\Omega)\setminus\{0\}$ and so it can be used as a test function in the variational characterization~\eqref{variationalCharctmu2} for the second eigenvalue $\mu_2$, hence
\begin{align}\label{firstestmu2}
\mu_2&\leq\frac{\mathcal{B}(v^+-\alpha_0v^-,v^+-\alpha_0v^-)}{\int_{\Omega}\left(v^+-\alpha_0v^-\right)^2}=\frac{\mathcal{B}(v^+-\alpha_0v^-,v^+-\alpha_0v^-)}{\int_{\Omega}(v^+)^2dx+\alpha_0^2\int_{\Omega}(v^-)^2dx}.
\end{align}
On the other hand, by assumption~\eqref{2Inequal},
\begin{eqnarray*}
\mu_2\left[ \int_{\Omega}(v^+)^2dx+\alpha_0^2\int_{\Omega}(v^-)^2dx\right]\geq\mathcal B(v,v^+)-\alpha_0^2\mathcal B(v,v^-)=\mathcal B(v,v^+-\alpha_0^2v^-),
\end{eqnarray*}
namely,
\begin{eqnarray}\label{lowerbounmu2}
\mu_2\geq\frac{\mathcal B(v,v^+-\alpha_0^2v^-)}{ \int_{\Omega}(v^+)^2dx+\alpha_0^2\int_{\Omega}(v^-)^2dx
}.
\end{eqnarray}
Using the definition of $\mathcal B$, the fact that 
\begin{equation}\label{okioki}\mathcal E(v,v^+-\alpha_0^2v^-)\geq\mathcal E(v^+-\alpha_0v^-,v^+-\alpha_0v^-),
\end{equation} (we refer to the proof of \cite[Lemma 2.1]{Benedikt} for these computations) and observing, by simple computations, that
\begin{align}\label{oki}
\int_{\Omega}(\lambda-pu^{p-1})v(v^+-\alpha_0^2v^-)dx
&=\int_{\Omega}(\lambda-pu^{p-1})((v^+)^2+\alpha_0^2(v^-)^2)dx\\
&=\int_{\Omega}(\lambda-pu^{p-1})(v^+-\alpha_0v^-)^2dx,
\end{align}
it follows that
\begin{equation}\label{evalB}
\mathcal B(v,v^+-(\alpha_0)^2v^-)\geq\mathcal B(v^+-\alpha_0v^-,v^+-\alpha_0v^-).
\end{equation}
Combining~\eqref{firstestmu2},~\eqref{lowerbounmu2} and~\eqref{evalB},
\begin{eqnarray}\label{BETTERlowerbounmu2}
\mu_2\leq\frac{\mathcal B(v^+-\alpha_0v^-,v^+-\alpha_0v^-)}{ \int_{\Omega}(v^+)^2dx+\alpha_0^2\int_{\Omega}(v^-)^2dx
}\leq \frac{\mathcal B(v,v^+-\alpha_0^2v^-)}{ \int_{\Omega}(v^+)^2dx+\alpha_0^2\int_{\Omega}(v^-)^2dx
}\leq\mu_2.
\end{eqnarray}
Then, recalling~\eqref{variationalCharctmu2}, one can show that $v^+-\alpha_0v^-$ is a second eigenfunction.   Furthermore, it also follows  that
\[
\mathcal B(v,v^+-\alpha_0^2v^-)=\mathcal B(v^+-\alpha_0v^-,v^+-\alpha_0v^-)
\]
and so, by the definition of $\mathcal B$ and using that $v^+$ and $v^-$ have disjoint supports, we have that
\[
\mathcal E(v,v^+-\alpha_0^2v^-)=\mathcal E(v^+-\alpha_0v^-,v^+-\alpha_0v^-).
\]
Using this last equality and the bilinearity of $\mathcal E$, we obtain that $1+\alpha_0^2=2\alpha_0$ (as in the proof of \cite[Lemma 2.1]{Benedikt}). Then $\alpha_0=1$ and this concludes the proof.
\end{proof}

\subsection{Polarization of second eigenfunctions in symmetric domains}\label{subsection:Polarization}
Let $H$ be the hyperplane 
\begin{align}\label{Hdef}
H:=\{x\in\mathbb R^N\,:\, x_1=0\}    
\end{align}
and let us define the halfspaces
\[\Sigma^+:=\{x\in\mathbb R^N\ :\ x_1\geq 0\}\qquad\text{ and }\qquad  \Sigma^-:=\{x\in\mathbb R^N\ :\ x_1\leq 0\}.\]
For $w\in H^s(\mathbb R^N)$, we define the polarization of $w$ with respect to $H$ as the function \begin{equation}\label{def:polarization}(Pw)(x):=\left\{\begin{array}
 {lr}
 \min\{w(x),w(\sigma(x))\}, &\ x\in \Sigma^+,\\
 & \\
 \max\{w(x),w(\sigma(x))\},&\ x\in\Sigma^-,
 \end{array} \right.
 \end{equation}
 where $\sigma(x)\in\mathbb R^N$ is the reflection of $x\in\mathbb R^N$ with respect to the hyperplane $H$ (see~\eqref{def:sigma}).
 Let us recall that
\begin{equation}\label{normsPreservingPolarization}
[(Pw)^{\pm}]_{H^s(\mathbb R^N)} \leq [w^{\pm}]_{H^s(\mathbb R^N)}\qquad\mbox{ and }\qquad   |(Pw)^{\pm}|_{2} =|w^{\pm}|_{2},
\end{equation}
where $[v]_{H^s(\mathbb R^N)}^2:=\mathcal E(v,v)$ (see, for instance, \cite[Lemma 2.3]{Benedikt}).

\begin{lemma}\label{lemma:analog2.3_VECCHIO}
Let $N\geq 1$ and assume that $\Omega\subset\mathbb R^N$ is a smooth bounded domain which is convex in the $x_1$-direction and symmetric with respect to the hyperplane $H$ (given by \eqref{Hdef}). Let $u$ be a non-negative solution of~\eqref{Peps} and let $\mathcal B$ be the associated bilinear form as defined in~\eqref{BilinearForm}.

Then $Pw\in \mathcal H^s_0(\Omega)$ for any $w\in \mathcal H^s_0(\Omega)$; furthermore,
\begin{equation}\label{disPositivevecchio}
\mathcal B (w,w^+)\geq\mathcal B (Pw,(Pw)^+)
\end{equation}
and 
\begin{equation}\label{disNegativevecchio}
-\mathcal B (w,w^-)\geq -\mathcal B (Pw,(Pw)^-).
\end{equation}

\end{lemma}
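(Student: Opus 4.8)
\textbf{Proof plan for Lemma~\ref{lemma:analog2.3_VECCHIO}.} The plan is to reduce the bilinear inequalities to three pieces: the quadratic fractional Gagliardo energy $\mathcal E$, the $L^2$-mass term $\lambda\int_\Omega w w^+$, and the potential term $-\int_\Omega p u^{p-1} w w^+$ (and similarly with $w^-$). The first piece is handled by the already-recalled contraction property of polarization on the Gagliardo seminorm \eqref{normsPreservingPolarization}: since $(Pw)^+ = P(w^+)$ when working relative to $H$ (the polarization commutes with taking positive/negative parts in the appropriate sense), we get $\mathcal E((Pw)^+,(Pw)^+) = [(Pw)^+]_{H^s}^2 \le [w^+]_{H^s}^2 = \mathcal E(w^+,w^+)$, and likewise for $w^-$. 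For the zeroth-order terms, first observe that $w w^+ = (w^+)^2$ pointwise, so $\int_\Omega \lambda w w^+ = \lambda |w^+|_2^2 = \lambda |(Pw)^+|_2^2 = \int_\Omega \lambda (Pw)(Pw)^+$ by the mass-preservation in \eqref{normsPreservingPolarization}; this term is in fact an equality and contributes nothing.

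The genuinely new ingredient is the potential term $\int_\Omega p u^{p-1} (w^+)^2\,dx$ versus $\int_\Omega p u^{p-1} ((Pw)^+)^2\,dx$. Here I would use that, by Remark~\ref{remark:symmmetryU}, the non-negative solution $u$ is symmetric with respect to $H$ and strictly decreasing in $|x_1|$, so $V := p u^{p-1}$ satisfies $V(x) = V(\sigma(x))$ and $V(x) \ge V(\sigma(x))$ for $x \in \Sigma^+$. The key pointwise fact about polarization is that for any $x \in \Sigma^+$ the pair $\{(Pw)^+(x)^2, (Pw)^+(\sigma(x))^2\}$ is a rearrangement of $\{w^+(x)^2, w^+(\sigma(x))^2\}$ with the larger value placed where $V$ is larger (i.e.\ at $x \in \Sigma^+$). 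By the rearrangement (Hardy–Littlewood type, here just the two-point inequality $a_1 b_1 + a_2 b_2 \ge a_1 b_2 + a_2 b_1$ when $(a_1-a_2)(b_1-b_2)\ge 0$), pairing the larger mass with the larger weight can only increase the integral, so $\int_\Omega V ((Pw)^+)^2 \ge \int_\Omega V (w^+)^2$; since this term enters $\mathcal B$ with a minus sign, we get $-\int_\Omega V (Pw)(Pw)^+ \le -\int_\Omega V w w^+$, which pushes in the right direction for \eqref{disPositivevecchio}. The analogous computation for $w^-$ — using that $(Pw)^-$ is the polarization of $w^-$ with respect to the \emph{reflected} hyperplane, equivalently that $(Pw)^-$ places the larger mass at $\sigma(x)$, $x\in\Sigma^+$, which is again the correct alignment because $w^-$ and $w^+$ have disjoint supports so the two polarizations never compete at the same point — gives \eqref{disNegativevecchio}.

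To assemble: combining the three estimates, $\mathcal B(w,w^+) = \mathcal E(w^+,w^+) + \int_\Omega(\lambda - V)(w^+)^2 \ge \mathcal E((Pw)^+,(Pw)^+) + \int_\Omega(\lambda - V)((Pw)^+)^2 = \mathcal B(Pw,(Pw)^+)$, using $ww^+=(w^+)^2$ and the disjoint-support identity $\mathcal E(w,w^+) \ge \mathcal E(w^+,w^+)$ (which holds because the cross term $-2\,\mathcal E(w^+,w^-)$ has a sign, as $w^+, w^-\ge 0$ live on disjoint sets so $(w^+(x)-w^+(y))(w^-(x)-w^-(y)) \le 0$ — this is the same observation already invoked in Lemma~\ref{lemma:analog2.1}). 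The claim $Pw \in \mathcal H^s_0(\Omega)$ follows from the convexity of $\Omega$ in the $x_1$-direction together with its $H$-symmetry: these two conditions guarantee $\sigma(\Omega\cap\Sigma^+)\subseteq\Omega$ and $\Omega\cap\Sigma^+ = \sigma(\Omega)\cap\Sigma^+$, so $Pw$ vanishes outside $\Omega$ whenever $w$ does, and $Pw \in H^s(\mathbb R^N)$ by \eqref{normsPreservingPolarization}.

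\textbf{Main obstacle.} I expect the delicate point to be the correct bookkeeping of which polarization (with respect to $H$ or to $\sigma(H)$, equivalently with the larger value on which side) governs $(Pw)^+$ versus $(Pw)^-$, and ensuring that the weight $V = p u^{p-1}$ — being symmetric and monotone in $|x_1|$ — is aligned so that \emph{both} \eqref{disPositivevecchio} and \eqref{disNegativevecchio} come out with the same (correct) sign; the monotonicity of $u$ from Remark~\ref{remark:symmmetryU} is exactly what makes this work, and without it the potential term would not have a definite sign. The fractional energy estimate, by contrast, is routine given \eqref{normsPreservingPolarization}, and the $L^2$ term is an identity.
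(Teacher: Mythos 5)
Your decomposition of the fractional energy term is where the argument breaks down, and the flaw is structural. You replace $\mathcal E(w,w^+)$ by the smaller quantity $\mathcal E(w^+,w^+)$ (dropping the cross term $-\mathcal E(w^-,w^+)\ge 0$, which is fine), and then compare with $\mathcal E((Pw)^+,(Pw)^+)$ via \eqref{normsPreservingPolarization}. But the quantity you actually need to dominate is $\mathcal E(Pw,(Pw)^+)$, and the \emph{same} disjoint-support observation shows $\mathcal E(Pw,(Pw)^+)\ge \mathcal E((Pw)^+,(Pw)^+)$. So your chain reads $\mathcal E(w,w^+)\ge \mathcal E(w^+,w^+)\ge \mathcal E((Pw)^+,(Pw)^+)\le \mathcal E(Pw,(Pw)^+)$, and the last inequality points the wrong way: the argument is inconclusive. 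The paper's proof does not reduce to pure positive-part energies; it invokes the \emph{cross-term} inequality
\begin{equation*}
\mathcal E(w,w^+)\ \ge\ \mathcal E(Pw,(Pw)^+), \qquad -\mathcal E(w,w^-)\ \ge\ -\mathcal E(Pw,(Pw)^-),
\end{equation*}
which is exactly \cite[Lemma 2.3]{Benedikt} and cannot be recovered from the contraction \eqref{normsPreservingPolarization} plus a sign on the cross term. This is a genuine gap, not a stylistic difference.

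Two secondary remarks on your treatment of the potential term. First, your pointwise description of the polarization is reversed: for $x\in\Sigma^+$ one has $(Pw)^+(x)=\min\{w^+(x),w^+(\sigma(x))\}$, i.e.\ the \emph{smaller} value sits on $\Sigma^+$, not the larger. Second, monotonicity of $u$ in $|x_1|$ plays no role here: the reflection $\sigma$ fixes $|x_1|$, so by the $H$-symmetry of $u$ one has $u^{p-1}(x)=u^{p-1}(\sigma(x))$ and the potential integral is \emph{invariant} under the polarization,
\begin{equation*}
\int_\Omega u^{p-1}(w^{\pm})^2\,dx=\int_\Omega u^{p-1}\bigl((Pw)^{\pm}\bigr)^2\,dx,
\end{equation*}
which is how the paper handles it (splitting $\Omega$ into the set $A$ where $Pw=w$ and its complement, and changing variables $x\mapsto\sigma(x)$). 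Your ``larger weight paired with larger value'' rearrangement heuristic would be the right picture for polarization about a shifted hyperplane $H_a$ with $a\ne 0$, which — as Remark~\ref{rkk:polarization_only_with_a=0} explains — is precisely the case where the inequalities go the wrong way and the lemma fails. Your claim that $Pw\in\mathcal H^s_0(\Omega)$ and the treatment of the $\lambda|w^\pm|_2^2$ term are correct and match the paper.
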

\begin{proof}
Note that $Pw\in H^s(\mathbb R^N)$ by~\eqref{normsPreservingPolarization} and from \cite[Lemma 2.3]{Benedikt}, we know that 
\begin{eqnarray}\label{BenediktTerm}
\mathcal E (w,w^+)\geq\mathcal E (Pw,(Pw)^+)\quad \text{ and }\quad 
-\mathcal E (w,w^-)\geq -\mathcal E (Pw,(Pw)^-).
\end{eqnarray}
Moreover, since $w=0$ in  $\mathbb R^N\setminus \Omega$ and $\Omega$ is symmetric,  it follows by the definition of the polarization that $Pw=0$ in  $\mathbb R^N\setminus \Omega$, hence $Pw\in \mathcal H^s_0(\Omega)$.
\\
Using~\eqref{normsPreservingPolarization},
\begin{eqnarray}\label{lambdaone}
\lambda\int_{\Omega}ww^+=\lambda|w^+|^2_2=\lambda|(Pw)^+|^2_2=\lambda\int_{\Omega}(Pw)(Pw)^+
\end{eqnarray}
and similarly
\begin{eqnarray}\label{lambdatwo}
-\lambda\int_{\Omega}ww^-=\lambda|w^-|^2_2=\lambda|(Pw)^-|^2_2 = -\lambda\int_{\Omega} (Pw)(Pw)^-.
\end{eqnarray}
Next, exploiting the symmetry of $u$ with respect to $H$, we show that 
\begin{eqnarray}
\label{toDo1vecchio}
\int_{\Omega} u^{p-1}(w^{\pm})^2\,dx=\int_{\Omega} u^{p-1}\left((Pw)^{\pm}\right)^2\,dx.
\end{eqnarray}
Let us write $\Omega$ as $\Omega=A\cup (\Omega\setminus A),$ where
\begin{align}\label{A:def}
A:=\{x\in \Omega\ :\ (Pw)(x)=w(x)\}.    
\end{align}
Then, 
\begin{equation}\label{partevecchio}
\int_{A} u^{p-1}\left((Pw)^{\pm}\right)^2\,dx=\int_{A} u^{p-1}(w^{\pm})^2\,dx.
\end{equation}
Let 
\[
\Omega\setminus A=\{x\in \Omega\ : \ (Pw)(x)=w(\sigma(x))\}=K^+\cup K^-,
\]
where 
\[K^+:=(\Omega\setminus A)\cap\Sigma^+=\{x\in \Omega\ : \ (Pw)(x)=w(\sigma(x))<w(x)\} \ \subset \Sigma^+,\]
\[K^-:=(\Omega\setminus A)\cap\Sigma^-=\{x\in \Omega\ : \ (Pw)(x)=w(\sigma(x))>w(x)\}\ \subset \Sigma^-.\]
 By definition $K^+\cap K^-=\emptyset$,  and 
\[\sigma(K^{\pm})=K^{\mp},\]
so $\Omega\setminus A$  is invariant by the reflection with respect to the hyperplane $H$.
In order to prove~\eqref{toDo1vecchio} we split the integral and make a change of variable, that is,
\begin{eqnarray}\nonumber&&\int_{\Omega\setminus A}\left[u^{p-1}((Pw)^{\pm})^2-u^{p-1}(w^{\pm})^2\right]dx=\\\nonumber
&&=\int_{K^-}u^{p-1}(x)\left[((Pw)^{\pm})^2(x)-(w^{\pm})^2(x)\right]dx
+\int_{K^+}u^{p-1}(x)\left[((Pw)^{\pm})^2(x)-(w^{\pm})^2(x)\right]dx
\\\nonumber
&&=
\int_{K^-}u^{p-1}(x)\left[(w^{\pm})^2(\sigma(x))-(w^{\pm})^2(x)\right]dx
+\int_{K^+}u^{p-1}(x)\left[(w^{\pm})^2(\sigma(x))-(w^{\pm})^2(x)\right]dx
\\\nonumber
&&=
\int_{K^-}u^{p-1}(x)\left[(w^{\pm})^2(\sigma(x))-(w^{\pm})^2(x)\right]dx
+\int_{K^-}u^{p-1}(\sigma(x))\left[(w^{\pm})^2( x)-(w^{\pm})^2(\sigma(x))\right]dx
\\\label{pezzovecchio}
&&=\int_{K^-} \left[(w^{\pm})^2(\sigma(x))-(w^{\pm})^2(x)\right]\left[u^{p-1}(x)-u^{p-1}(\sigma(x))\right]= 0,
\end{eqnarray}
where the equality follows by the symmetry of $u$ with respect to $H$ (see Remark~\ref{remark:symmmetryU}).   Then 
\eqref{toDo1vecchio} follows from~\eqref{partevecchio} and~\eqref{pezzovecchio}.\\
Finally~\eqref{disPositivevecchio} and~\eqref{disNegativevecchio} follow by the definition of the bilinear form $\mathcal B$ in~\eqref{BilinearForm} and by ~\eqref{toDo1vecchio},~\eqref{lambdaone}-\eqref{lambdatwo} and~\eqref{BenediktTerm}.
\end{proof}

We are now ready to prove the main result of this subsection. 

\begin{proposition}\label{prop:polarizationIsEigenfunction}
Let $N\geq 1$ and assume that $\Omega\subset\mathbb R^N$ is a smooth bounded domain which is convex in the $x_1$-direction and symmetric with respect to the hyperplane $H$. Let $u$ be a non-negative solution of~\eqref{Peps}.
If $v$ is a second eigenfunction for  the linearized eigenvalue problem~\eqref{eigenvalueProblematu} at $u$, then the polarization $Pv$ of $v$ is a second eigenfunction for the same eigenvalue problem.
\end{proposition}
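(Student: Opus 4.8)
The plan is to apply Lemma~\ref{lemma:analog2.1} to the function $w:=Pv$, so it suffices to verify that $w^{\pm}\not\equiv 0$ and that the two inequalities \eqref{2Inequal}, \eqref{2InequalNegativepart} hold for $w$. First I would record that $v$ is a second eigenfunction, so by \eqref{equalitiesSecondaEigen} we have the \emph{equalities}
\begin{equation*}
\mu_2\int_{\Omega}(v^+)^2\,dx=\mathcal B(v,v^+)\qquad\text{and}\qquad \mu_2\int_{\Omega}(v^-)^2\,dx=-\mathcal B(v,v^-).
\end{equation*}
By Lemma~\ref{lemma:analog2.3_VECCHIO} we have $Pv\in\mathcal H^s_0(\Omega)$ together with $\mathcal B(v,v^+)\geq\mathcal B(Pv,(Pv)^+)$ and $-\mathcal B(v,v^-)\geq-\mathcal B(Pv,(Pv)^-)$. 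Combining these with the identities $|(Pv)^{\pm}|_2=|v^{\pm}|_2$ from \eqref{normsPreservingPolarization}, we get
\begin{equation*}
\mu_2\int_{\Omega}\big((Pv)^+\big)^2\,dx=\mu_2\int_{\Omega}(v^+)^2\,dx=\mathcal B(v,v^+)\geq\mathcal B(Pv,(Pv)^+),
\end{equation*}
which is exactly \eqref{2Inequal} for $w=Pv$, and similarly \eqref{2InequalNegativepart} follows from the negative-part inequality in Lemma~\ref{lemma:analog2.3_VECCHIO}.

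It remains to check the nontriviality hypothesis $(Pv)^{\pm}\not\equiv 0$. Again using $|(Pv)^{\pm}|_2=|v^{\pm}|_2$ from \eqref{normsPreservingPolarization}, and the fact that a second eigenfunction is sign-changing by Lemma~\ref{eg vl neg} so that $|v^{\pm}|_2>0$, we conclude $|(Pv)^{\pm}|_2>0$, hence $(Pv)^{\pm}\not\equiv 0$. Thus all hypotheses of Lemma~\ref{lemma:analog2.1} are met, and that lemma yields that $Pv$ is an eigenfunction associated to $\mu_2$, i.e.\ a second eigenfunction of \eqref{eigenvalueProblematu} at $u$.

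The only real subtlety is making sure the two cited lemmas are genuinely applicable here: Lemma~\ref{lemma:analog2.3_VECCHIO} requires $\Omega$ to be convex in the $x_1$-direction and symmetric with respect to $H$ and $u$ a non-negative solution of \eqref{Peps} — exactly our standing hypotheses — and it is in its proof (via the symmetry of $u$, Remark~\ref{remark:symmmetryU}) that the convexity/symmetry of the domain is essential; without it the polarization argument would fail. I would therefore state the invocation of Lemma~\ref{lemma:analog2.3_VECCHIO} carefully. Everything else is a direct chaining of the displayed (in)equalities, so the proof is short; the conceptual work has already been front-loaded into Lemmas~\ref{lemma:analog2.1} and~\ref{lemma:analog2.3_VECCHIO}.
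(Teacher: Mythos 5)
Your proposal is correct and follows exactly the same route as the paper: use the equalities~\eqref{equalitiesSecondaEigen} for $v$, transfer them to $Pv$ via Lemma~\ref{lemma:analog2.3_VECCHIO} and the $L^2$-norm preservation~\eqref{normsPreservingPolarization}, then invoke Lemma~\ref{lemma:analog2.1}. The only difference is that you spell out the nontriviality check $(Pv)^\pm\not\equiv 0$ via $|(Pv)^\pm|_2=|v^\pm|_2>0$, which the paper states without comment.
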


\begin{proof} 
Since $v$ is a second eigenfunction then, $v^{\pm}\not\equiv 0$ and, by~\eqref{equalitiesSecondaEigen}, $v^+$ and $v^-$ satisfy the equalities
\[\mu_2\int_{\Omega}(v^+)^2 \ dx=\mathcal B (v,v^+)\qquad\mbox{ and }\qquad \mu_2\int_{\Omega}(v^-)^2\ dx=-\mathcal B (v,v^-).
\]
Furthermore $(Pv)^{\pm}\not\equiv 0$ and, by Lemma~\ref{lemma:analog2.3_VECCHIO}, $Pv \in \mathcal H^s_0(\Omega)$ and
\[
\mathcal B (v,v^+)\geq\mathcal B (Pv,(Pv)^+)
\]
and 
\[
-\mathcal B (v,v^-)\geq -\mathcal B (Pv,(Pv)^-).
\]
As a consequence
\[\mu_2\int_{\Omega}(v^+)^2 \ dx\geq\mathcal B (Pv,(Pv)^+)\quad\mbox{ and }\quad \mu_2\int_{\Omega}(v^-)^2\ dx\geq-\mathcal B (Pv,(Pv)^-).
\]
Since (see by~\eqref{normsPreservingPolarization})
\[\mu_2\int_{\Omega}((Pv)^{\pm})^2=\mu_2\int_{\Omega}(v^{\pm})^2,\]
then
\begin{equation}\mu_2\int_{B}((Pv)^+)^2 \ dx\geq\mathcal B (Pv,(Pv)^+)\quad\mbox{ and }\quad \mu_2\int_{B}((Pv)^-)^2\ dx\geq-\mathcal B (Pv,(Pv)^-).
\end{equation}
The conclusion follows by Lemma~\ref{lemma:analog2.1}. 
\end{proof}

\subsection{The proof of Theorem~\ref{sym}}\label{subsection:proofOfSymmetryResult}
We prove  now the main result of this section.
\begin{proof}[Proof of Theorem~\ref{sym}]
Let $u$ be a solution to problem~\eqref{Peps}
 and let $v$ be a second eigenfunction for the eigenvalue problem~\eqref{LinearizzatoInSec3}. Let $Pv$ be the polarization of $v$ as defined in~\eqref{def:polarization}. Proposition~\ref{prop:polarizationIsEigenfunction} implies that $Pv$ is a second eigenfunction for the eigenvalue problem~\eqref{LinearizzatoInSec3}. Let us define
\[w(x):=(Pv)(\sigma (x))-(Pv)(x).\]
\emph{Step 1. We show that $w$ is an antisymmetric solution to~\eqref{LinearizzatoInSec3}}.

Indeed, for all $\varphi \in \cH_0^{s}(\Omega)$,
\begin{eqnarray} \label{soluzione1}
&&  
\int_{\mathbb R^N}\int_{\mathbb R^N}\frac{ \big(w(x)-w(y)\big)		\big(\varphi(x)-\varphi(y)\big)}{|x-y|^{N+2s}}\,dx\,dy=
\nonumber\\
&&  
=\int_{\mathbb R^N}\int_{\mathbb R^N}\frac{ \Big((Pv)(\sigma (x))-(Pv)(x)-(Pv)(\sigma (y))+(Pv)(y)\Big)		\big(\varphi(x)-\varphi(y)\big)}{|x-y|^{N+2s}}\,dx\,dy
\nonumber\\
&&  =-\int_{\mathbb R^N}\int_{\mathbb R^N}\frac{ \big((Pv)(x)-(Pv)(y)\big)		\big(\varphi(x)-\varphi(y)\big)}{|x-y|^{N+2s}}\,dx\,dy+\nonumber\\ 
&&\qquad +
\int_{\mathbb R^N}\int_{\mathbb R^N}\frac{ \big((Pv)(\sigma (x))-(Pv)(\sigma (y))\big)		\big(\varphi(x)-\varphi(y)\big)}{|x-y|^{N+2s}}\,dx\,dy
\end{eqnarray}
where, by a change of variables, 
\begin{eqnarray}\label{soluzione2}
&&\int_{\mathbb R^N}\int_{\mathbb R^N}\frac{ \big((Pv)(\sigma (x))-(Pv)(\sigma (y))\big)		\big(\varphi(x)-\varphi(y)\big)}{|x-y|^{N+2s}}\,dx\,dy=
\nonumber\\
&&
= \int_{\mathbb R^N}\int_{\mathbb R^N}\frac{ \big((Pv)(x)-(Pv)(y)\big)		\big(\varphi(\sigma (x))-\varphi(\sigma (y))\big)}{|\sigma (x)-\sigma (y)|^{N+2s}}\,dx\,dy
\nonumber \\
&&
=\int_{\mathbb R^N}\int_{\mathbb R^N}\frac{ \big((Pv)(x)-(Pv)(y)\big)		\big(\varphi(\sigma (x))-\varphi(\sigma (y))\big)}{|x-y|^{N+2s}}\,dx\,dy.
\end{eqnarray}
Hence, substituting~\eqref{soluzione2} into~\eqref{soluzione1} and using that $Pv$ is a solution of the eigenvalue problem~\eqref{LinearizzatoInSec3}, 
\begin{eqnarray} \label{soluzione3}
&&  
\int_{\mathbb R^N}\int_{\mathbb R^N}\frac{ \big(w(x)-w(y)\big)		\big(\varphi(x)-\varphi(y)\big)}{|x-y|^{N+2s}}\,dx\,dy=
\nonumber\\
&&  =-\int_{\mathbb R^N}\int_{\mathbb R^N}\frac{ \big((Pv)(x)-(Pv)(y)\big)		\big(\varphi(x)-\varphi(y)\big)}{|x-y|^{N+2s}}\,dx\,dy +\nonumber\\
&&\qquad +  
\int_{\mathbb R^N}\int_{\mathbb R^N}\frac{ \big((Pv)(x)-(Pv)(y)\big)		\big(\varphi(\sigma (x))-\varphi(\sigma (y))\big)}{|x-y|^{N+2s}}\,dx\,dy
\nonumber
\\
&&=  - \int_\Omega \left( -\lambda +\mu_2+pu(x)^{p-1}\right)(Pv)(x)\varphi(x)\,dx+\nonumber\\
&&\qquad+\int_\Omega \left( -\lambda +\mu_2 +pu(x)^{p-1}\right)(Pv)(x)\varphi(\sigma(x))\,dx.
\end{eqnarray}
Furthermore, by a change of variables, exploiting the symmetry of $\Omega$ and of the solution $u$ (\emph{i.e.} $\sigma (\Omega)=\Omega$ by assumption and $u(x)=u(\sigma (x))$, see Remark~\ref{remark:symmmetryU}) one obtains that
\begin{eqnarray}\label{soluzione4}
&&\int_\Omega \left( -\lambda +\mu_2+pu(x)^{p-1}\right)(Pv)(x)\varphi(\sigma(x))\,dx=\nonumber\\
&&\qquad =\int_{\sigma(\Omega)} \left( -\lambda +\mu_2+pu(\sigma (x))^{p-1}\right)(Pv)(\sigma (x))\varphi(x)\,dx
\nonumber\\
&&\qquad = \int_{\Omega} \left( -\lambda +\mu_2+pu(x)^{p-1}\right)(Pv)(\sigma (x))\varphi(x)\,dx.
\end{eqnarray}
Hence, substituting~\eqref{soluzione4} into~\eqref{soluzione3}, we have that $w$ is a weak solution of~\eqref{LinearizzatoInSec3}, because
\begin{eqnarray}\label{soluzione5}
&&  
\int_{\mathbb R^N}\int_{\mathbb R^N}\frac{ \big(w(x)-w(y)\big)		\big(\varphi(x)-\varphi(y)\big)}{|x-y|^{N+2s}}\,dx\,dy=
\nonumber\\
&&  = -\int_\Omega \left( -\lambda +\mu_2+pu(x)^{p-1}\right)(Pv)(x)\varphi(x)\,dx+\int_{\Omega} \left( -\lambda +\mu_2+pu(x)^{p-1}\right)(Pv)(\sigma (x))\varphi(x)\,dx
\nonumber
\\
&& =\int_{\Omega} \left( -\lambda +\mu_2+pu(x)^{p-1}\right)w(x)\varphi(x)\,dx.
\end{eqnarray} The antisymmetry of $w$ is trivial.
 This concludes the proof of \emph{Step 1.}

\medskip
 
\emph{Step 2. We show that
\begin{equation}\label{wNotNodalinH}
 w\geq 0 \text{ in } \Sigma^+.
\end{equation}}

\medskip

If $x\in\Sigma^+$ then $\sigma (x)\in \Sigma^- $, so, by the  definition of the polarization, it follows that
\[(Pv)(x)=\min\{v(x),v(\sigma(x))\}, \]
while
\[(Pv)(\sigma(x))=\max\{v(\sigma(x), v(\sigma(\sigma(x))=v(x)\}.\]
As a consequence,
\[(Pv)(\sigma(x))\geq (Pv)(x)\]
and the conclusion follows by the definition of $w$.

\medskip

\emph{Step 3. We show that $w\equiv 0$ (namely that $Pv$ is symmetric with respect to $H$).}

\medskip

By \emph{Step 2.}, we can use the nonlocal antisymmetric maximum principle 
(see \cite{JarohsWeth2016}), and deduce that
\begin{equation}\label{altern}
	\text{either } w\equiv 0 \text{ or } w>0 \text{ in } K \text{ for any compact } K  \text{ of } \Sigma^+\cap\Omega.
\end{equation}
We claim that the second alternative cannot hold.  To obtain the claim we have taken inspiration from the proof of \cite[Lemma 2]{lin1994}, related to a local problem.
Assume, by contradiction, that the second alternative holds.  Then,
by  the  nonlocal Hopf lemma for antisymmetric supersolutions in \cite[Proposition 3.3]{FallJarohs2015}, we derive that 
\begin{equation}\label{signDerivativeW}
 \frac{w}{\delta^{s}}>0\mbox{ on  }\partial\Omega\cap \Sigma^+.
\end{equation}
Here $\delta(x):=\operatorname{dist}(x,\partial \Omega)$ and the quotient $\frac{w}{\delta^{s}}$ is understood at $\partial \Omega$ as a limit from inside $\Omega$.

Next, using the integration by parts formula in \cite[Theorem 1.9]{Ros-OtonSerra2014} (see also \cite{RosOtonSurvey}) applied to $u$ and $w$, we obtain that
\begin{equation}\label{byparts}
\int_{\Omega}\frac{\partial u}{\partial x_{1}}(-\Delta)^{s}w + \frac{\partial w}{\partial x_{1}}(-\Delta)^{s}u\,dx=
-\Gamma (1+s)^2\int_{\partial\Omega}\frac{u}{\delta^{s}}\frac{w}{\delta^{s}}\nu_{1}\,d\sigma,\end{equation}
where $\nu_{1}$ is the first component of $\nu$ 
 the  unit outward normal to $\partial\Omega$ at $x$.
 
Observe that, since $u$ and $w$ solve problems~\eqref{Peps} and~\eqref{LinearizzatoInSec3} respectively, one has that
\begin{align*}
\int_{\Omega}&
\frac{\partial u}{\partial x_{1}}(-\Delta)^{s}w + \frac{\partial w}{\partial x_{1}}(-\Delta)^{s}u\,dx\\
&= 
\int_{\Omega} \frac{\partial u}{\partial x_{1}} p u^{p-1}w\,dx+\int_{\Omega} \frac{\partial w}{\partial x_{1}}u^p\,dx
-\lambda \int_\Omega   \frac{\partial u}{\partial x_{1}}w\,dx -\lambda \int_\Omega  \frac{\partial w}{\partial x_{1}}u\,dx+\mu_2\int_{\Omega} \frac{\partial u}{\partial x_{1}} w\,dx\\
 &=\int_\Omega \frac{\partial (u^p)}{\partial x_{1}}w\,dx+\int_{\Omega} \frac{\partial w}{\partial x_{1}}u^p\,dx +\mu_2\int_{\Omega} \frac{\partial u}{\partial x_{1}} w\,dx= \mu_2\int_{\Omega} \frac{\partial u}{\partial x_{1}} w\,dx.
\end{align*}
Hence,~\eqref{byparts} implies that
\begin{equation}\label{Kidentity}
\Gamma (1+s)^2\int_{\partial\Omega}\frac{u}{\delta^{s}}\frac{w}{\delta^{s}}\nu_{1}\,d\sigma=-\mu_2\int_{\Omega} \frac{\partial u}{\partial x_{1}} w\,dx \leq 0.
\end{equation} 
Where the last inequality in~\eqref{Kidentity} follows from the fact that by assumption $\mu_2\leq 0$, and moreover 
\[\frac{\partial u}{\partial x_{1}} w<0 \quad\mbox{ in }\Omega.\]
Indeed  $w>0$ in $\Omega\cap\Sigma^+$ and so, by the antisymmetry,  $w<0$ in $\Omega\cap\Sigma^-$. Moreover $u$ is radial and monotone decreasing in $|x_1|$, so $\frac{\partial u}{\partial x_{1}} <0$ in  $\Omega\cap\Sigma^+$ and $\frac{\partial u}{\partial x_{1}} >0$ in  $\Omega\cap\Sigma^-$.
\\
Finally let us observe that by the Hopf boundary Lemma for the nonlocal case  (recall that $u$ is positive) we know that 
\[
\frac{u}{\delta^{s}}>0\quad \mbox{ on }\partial\Omega.
\]
Moreover, from~\eqref{signDerivativeW}, the antisymmetry of $w$ and the geometric assumptions on $\Omega$ we deduce that
\[
\frac{w}{\delta^{s}}\nu_{1}\geq 0
\quad \text{ and }\quad 
\frac{w}{\delta^{s}}\nu_{1}\not\equiv 0\quad \mbox{ on }\partial \Omega.
\]
Hence,~\eqref{Kidentity} gives a contradiction. As a consequence, recalling~\eqref{altern}, we have that $w\equiv 0$. 

\medskip

\emph{Step 4. We show that  $v$ is symmetric with respect to $H$.}
\medskip
Let $x\in \Sigma^+$, then $\sigma(x)\in \Sigma^-$; hence, using the definition of the polarization, we have
\[(Pv)(x)=\min\{v(x), v(\sigma(x))\},\]
\[(Pv)(\sigma( x))=\max\{v(\sigma( x)), v(\sigma(\sigma( x)))=v(x)\}.\]
By \emph{Step 3,} $(Pv)$ is symmetric w.r.t. the hyperplane $H$ so, 
$(Pv)(x)=(Pv)(\sigma( x))$, hence
\[\min\{v(x), v(\sigma(x))\}=\max\{v(x), v(\sigma(x))\},\]
namely $v(x)=v(\sigma( x))$. For $x\in\Sigma^-$ we can repeat a similar argument,
and the proof is complete.
\end{proof}

\begin{remark}\label{rkk:polarization_only_with_a=0}
If one were able to exhibit the existence of a non-symmetric second eigenfunction of \eqref{linearizedEqINTRO}, then Theorem~\ref{sym} would imply that $\mu_2>0$ necessarily. Since we know that $\mu_1<0$, this would immediately imply the nondegeneracy for \emph{any} non-negative solution of~\eqref{Peps} and, from it, the uniqueness would also follow using the uniqueness and nondegeneracy in the asymptotically linear regime (i.e. as $p\to 1^+$), which has been
recently proved in the nonlocal framework in \cite{DIS22}.

In \cite{Benedikt}, the nonradiality of the second eigenfunction of the fractional Laplacian $(-\Delta)^s$ in the ball is shown by considering a polarization of a radial second eigenfunction with respect to the hyperplane $H_a=\{x\in\mathbb R^N: x_1=a\}$ for a suitable $a\neq 0$. Unfortunately, due to the presence of the non-autonomous potential $pu^{p-1}$ in our linearized equation (see \eqref{linearizedEqINTRO}), this idea cannot be used in our setting.  Indeed, let $w$ be a second radial eigenfunction of \eqref{linearizedEqINTRO}, then, using Lemma~\ref{lemma:SignAlongBoundary}, we may assume, without loss of generality, that $w\geq 0$ in an annular region about the boundary (otherwise we take $-w$); hence, it is not difficult to see that $(P_aw)^\pm\in \mathcal H^s_0(B)$ for $a>0$ small.  Then, similar computations as those in the proof of Lemma~\ref{lemma:analog2.3_VECCHIO}, exploiting the symmetry and monotonicity of the term $pu^{p-1}$, show that the $2$ equalities in~\eqref{toDo1vecchio} reduce, if $a>0$, to the following \emph{strict} inequalities
    \begin{eqnarray}
&\int_{B} u^{p-1}(w^{+})^2\,dx<\int_{B} u^{p-1}\left((P_aw)^{+}\right)^2\,dx,\nonumber
\\
\label{BIGPROBLEM}
&\int_{B} u^{p-1}(w^{-})^2\,dx>\int_{B} u^{p-1}\left((P_aw)^{-}\right)^2\,dx
\end{eqnarray}
(note that, in this case, $B\setminus A\neq \emptyset$ with $A$ as in \eqref{A:def}, since $w$ is radial and $a>0$).
In particular, \eqref{BIGPROBLEM} does not allow now to conclude that ~\eqref{disNegativevecchio} in Lemma~\ref{lemma:analog2.3_VECCHIO} holds true (one needs the \emph{opposite} inequality in~\eqref{BIGPROBLEM}) and this, in turn, prevents the use of Lemma \ref{lemma:analog2.1}. Therefore, a contradiction with Theorem \ref{sym} cannot be achieved in this way.
\end{remark}

\section{Uniqueness and nondegeneracy of least-energy solutions in large symmetric domains}\label{section:uniquenessLargeDomain}

This section is devoted to the proof of  Theorem~\ref{theorem:uniquenessLARGE}.
Theorem~\ref{theorem:uniquenessAndNondegeneracyB} is a special case of Theorem~\ref{theorem:uniquenessLARGE}.

In what follows we assume that $\lambda >0,$ and we recall that
\begin{align*}
\|w\|^2:= \lambda|w|_2^2+\cE(w,w)
\end{align*}
as given in~\eqref{norm:def}.  For $m\in \N$, let $O(m)$ denote the group of linear isometries of $\R^m$.   Let $N\geq 2$, $\ell\geq 1,$ and $m_i\geq 2$ for $i=1,\ldots,\ell$ be such that $\sum_{i=1}^\ell m_i =N$. We set
\begin{align}\label{Gsection}
  G:=O(m_1)\times \ldots \times O(m_\ell)\subseteq O(N).  
\end{align}
\begin{definition} 
We say that a domain $D\subset\mathbb R^N$ is $G$-invariant if, for every $x\in D\subset \R^{m_1}\times \ldots \times \R^{m_\ell}=\R^N$, we have that 
\begin{align*}
gx = (g_1x_1,\ldots,g_\ell x_\ell)\in  D\quad \text{ for all }g=(g_1,\ldots,g_\ell)\in O(m_1)\times \ldots \times O(m_\ell)=G.    
\end{align*}
\end{definition}

\begin{definition} Let
\begin{align}\label{HsG:def}
H^s_G(\mathbb R^N):=\left\{w\in H^s(\mathbb R^N)\ :\ w(gx)=w(x)  \mbox{ for any } g\in G\right\}
\end{align}
denote the subspace of $G$-invariant functions in $H^s(\mathbb R^N)$. 
\end{definition}

These symmetries have infinite orbits. Therefore, as is well known, the space $H_G^s(\R^N)$ can be compactly embedded in $L^{p+1}(\R^N)$.  We state this result next. 

\begin{lemma}\label{prop:compact} Let $N\geq 2$ and let $G$ be as in \eqref{Gsection}.
Then the embedding $H^s_G(\R^N)\hookrightarrow L^{p+1}(\R^N)$ is compact, for all $p\in (1,2^\star_s-1)$.
\end{lemma}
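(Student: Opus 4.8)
The plan is to prove the compact embedding $H^s_G(\R^N)\hookrightarrow L^{p+1}(\R^N)$ for $p\in(1,2^\star_s-1)$ by combining the fractional Sobolev inequality with a ``vanishing cannot occur'' argument, exploiting that $G$-invariance forces mass to spread out. First I would recall the standard ingredients: the continuous embedding $H^s(\R^N)\hookrightarrow L^q(\R^N)$ for all $q\in[2,2^\star_s]$, and the fact that for any bounded domain $B_\rho$ the embedding $H^s(B_\rho)\hookrightarrow L^{p+1}(B_\rho)$ is compact (classical Rellich-type result for fractional Sobolev spaces, see e.g. \cite{servadei2013variational}). Take a bounded sequence $(w_n)\subset H^s_G(\R^N)$; up to a subsequence $w_n\weakto w$ in $H^s(\R^N)$ with $w\in H^s_G(\R^N)$, and by local compactness $w_n\to w$ strongly in $L^{p+1}_{\mathrm{loc}}(\R^N)$ and a.e. It remains to upgrade local convergence to global convergence in $L^{p+1}(\R^N)$, i.e. to show no mass escapes to infinity; equivalently, setting $z_n:=w_n-w$ (still $G$-invariant, bounded in $H^s$, $z_n\weakto 0$, $z_n\to0$ in $L^{p+1}_{\mathrm{loc}}$), I must show $z_n\to 0$ in $L^{p+1}(\R^N)$.

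The key geometric point is that if $x=(x_1,\dots,x_\ell)\in\R^{m_1}\times\cdots\times\R^{m_\ell}$ is far from the ``singular axis'' (where some coordinate block $x_i$ vanishes), then the $G$-orbit of $x$ is a large sphere, so any fixed small ball around $x$ gets copied by $G$ into many pairwise-disjoint translates. Concretely: the number of disjoint unit balls one can fit in the orbit $G\cdot x$ grows like a positive power of $\min_i|x_i|$. Hence, for a $G$-invariant function $z_n$, an $L^{p+1}$-mass bound of a fixed amount $\eta$ on a unit ball centered at such an $x$ would force, by invariance and disjointness, a total $L^{p+1}$-norm that blows up unless $\min_i|x_i|$ stays bounded; more usefully, it forces the $L^2$-norm on that orbit to be large. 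The clean way to package this is the following dichotomy: for any $\eta>0$ there is $C_\eta$ such that for every $G$-invariant $z\in H^s(\R^N)$ and every $x$ with $\min_i|x_i|\ge C_\eta$ one has $|z|_{p+1;B_1(x)}\le \eta\big(|z|_{2;B_1(x)}^2+[z]_{H^s(B_1(x))}^2\big)^{1/2}$ — i.e. on the ``fat'' region the local $L^{p+1}$ norm is controlled with an arbitrarily small constant by the local $H^s$ energy, because the interpolation/Sobolev constant on $B_1(x)$ can be improved using the many disjoint copies of the ball inside the orbit. Summing such estimates over a locally finite cover of $\{\min_i|x_i|\ge C_\eta\}$ and using boundedness of $(z_n)$ in $H^s(\R^N)$ gives $\limsup_n \int_{\{\min_i|x_i|\ge C_\eta\}}|z_n|^{p+1}\le C\eta$. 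On the complementary region $\{\min_i|x_i|< C_\eta\}$, which (after quotienting by $G$) is essentially a neighborhood of a lower-dimensional set, one still needs to handle the directions that remain unbounded.

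For that complementary region I would argue as follows: the set $\{x:\min_i|x_i|<C_\eta\}$ is $G$-invariant and, modulo $G$, is contained in a finite union of ``slabs'' of the form $\{|x_i|<C_\eta\}$. Fix one such slab $S_i=\{|x_i|<C_\eta\}\cong (\text{ball in }\R^{m_i})\times\R^{N-m_i}$. Within $S_i$, translations in the $x_i$-directions are bounded, but the remaining $\R^{N-m_i}$ directions are still unbounded — however these directions themselves carry nontrivial $G$-symmetry (they are acted on by $O(m_j)$, $j\ne i$) unless $\ell=1$, and an induction on $\ell$, or a direct iteration of the disjoint-orbit argument restricted to the subgroup $\prod_{j\ne i}O(m_j)$, again kills the mass at infinity in those directions — with the base case being the genuinely radial situation $\ell=1$, $m_1=N\ge2$, which is Strauss's classical radial compactness lemma (in the fractional setting, see e.g. the fractional Strauss inequality referenced in the introduction, \cite{deNapoli,ChoOzawa}). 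Thus after finitely many iterations we conclude $\limsup_n|z_n|_{p+1;\R^N}\le C\eta^{1/(p+1)}$ for every $\eta>0$, hence $z_n\to0$ in $L^{p+1}(\R^N)$, which is the claim.

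The main obstacle is making the ``disjoint copies of a ball inside a large orbit'' estimate quantitative and uniform, and correctly bookkeeping the induction over the product of orthogonal groups so that at every stage the ``remaining unbounded directions'' still carry enough symmetry (or are handled by the radial base case). One has to be careful that the constant $C_\eta$ and the number of disjoint balls depend only on $\eta$, $N$, and the fixed decomposition $m_1,\dots,m_\ell$, not on $n$ or on the particular point $x$; and that the slabs near the singular set, though of ``small measure'' in a quotient sense, are genuinely handled by a lower-dimensional instance of the same statement rather than swept under the rug. Once the geometry is set up this way, everything else — weak compactness, local Rellich, a.e. convergence, and the final $\eta\to0$ limit — is routine.
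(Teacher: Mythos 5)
Your geometric idea -- that $G$-invariance forces a bounded $H^s_G$ sequence to have uniformly small $L^2$-mass on unit balls far from the origin, because the $G$-orbit of such a ball contains many pairwise-disjoint translates -- is the right mechanism and is the same circle of ideas the paper points to (\cite{CS19} for the local version, Lions' vanishing lemma in its fractional form \cite{S13}). However, you have chosen the wrong quantity to measure the ``fatness'' of the orbit: you use $\min_i|x_i|$, whereas the correct and much simpler choice is $\max_i|x_i|$ (equivalently, $|x|$). The number of pairwise-disjoint unit balls that fit in $G\cdot x$ is controlled from below by a power of $\max_i|x_i|$, not of $\min_i|x_i|$: if $|x_i|\ge\rho$ for a \emph{single} index $i$ (and $m_i\ge 2$), then already the $O(m_i)$-orbit $\{(x_1,\ldots,g_ix_i,\ldots,x_\ell):g_i\in O(m_i)\}$ is a sphere of radius $\rho$ in an $m_i$-dimensional coordinate subspace and carries at least $c\,\rho^{\,m_i-1}\ge c\,\rho$ pairwise-disjoint unit balls of $\R^N$. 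You never need all blocks $x_i$ to be simultaneously large. Once you switch to $\max_i|x_i|$, the ``bad'' region $\{\max_i|x_i|<C_\eta\}$ is bounded, hence handled by local Rellich compactness, and the entire slab decomposition, the induction on $\ell$, and the appeal to Strauss's lemma as a base case all disappear. What you flag as the ``main obstacle'' -- the bookkeeping of the induction over $\prod_{j\ne i}O(m_j)$ -- is not an obstacle in the correct setup; your sketch of that induction is left incomplete precisely because it is the hard way to do something easy.

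Concretely, with $z_n:=w_n-w$ as in your proposal: for any $y\in\R^N$ with $|y|\ge\rho$ one has $|y_i|\ge\rho/\sqrt{\ell}$ for some $i$, so by $G$-invariance of $z_n$ and the disjoint balls above,
\begin{align*}
\int_{B_1(y)}|z_n|^2\,dx\;\le\;\frac{C(N,\ell)}{\rho}\,|z_n|_2^2,
\end{align*}
uniformly in $n$. Combined with $z_n\to 0$ in $L^2(B_\rho(0))$, this yields $\sup_{y\in\R^N}\int_{B_1(y)}|z_n|^2\to 0$, and the fractional Lions vanishing lemma \cite[Lemma 2.4]{S13} (which the paper itself uses again in the proof of Proposition~\ref{prop:utoQ}, cf.~\eqref{c1}) gives $z_n\to 0$ in $L^q(\R^N)$ for every $q\in(2,2^\star_s)$, in particular for $q=p+1$. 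This also replaces your ball-by-ball Sobolev/interpolation inequality, which is more delicate to sum over a cover. In short, the proposal has the right idea but leaves a genuine gap (the unresolved induction), and that gap is an artifact of basing the estimate on $\min_i|x_i|$ instead of a single large coordinate block.
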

\begin{proof}
Since $\mathcal O_x:=\{ gx \,  :\, g\in G\}$ the $G$-orbit of any $x\in\Omega\setminus\{0\}$  is infinite, the proof follows classical arguments that can be found, for instance, in \cite{CS19}. The extension to the fractional case is straightforward using the fractional version of the concentration compactness lemma \cite{S13}.
\end{proof}
\begin{remark}\label{rmk:compactembeddings}
We stress that Lemma \ref{prop:compact} holds under the assumption $N\geq 2$. For this reason  in Theorem \ref{theorem:uniquenessAndNondegeneracyB} and in Theorem \ref{theorem:uniquenessLARGE} we do not consider the case $N=1$.\\
Furthermore let us observe that if $G=O(N)$,  then Lemma \ref{prop:compact} gives the compactness of the embedding 
$H^s_{rad}(\R^N)\hookrightarrow L^{p+1}(\R^N)$.\\
\end{remark}
In this section we consider domains $D\subset \mathbb R^N$ that satisfy the following assumption.
\begin{itemize}
\item[$(\mathcal S)$] $D \subset\mathbb R^N$ is a Lipschitz bounded $G$-invariant domain and convex in the $x_i$-direction, 
for any $i=1,\ldots,N$.
\end{itemize}
Let us observe that, if $D$ satisfies $(\mathcal S)$, then
\begin{align*}
(x_1,\ldots,x_{i-1},tx_i,x_{i+1},\ldots,x_N)\in D\qquad \text{for every $x=(x_1,\ldots,x_N)\in  D,$ $t\in[-1,1],$}    
\end{align*}
 and $i=1,\ldots,N$.

In the following, $D$ denotes a smooth open bounded set satisfying $(\mathcal S)$ and, for $R>0$, let
\begin{align}\label{dr:def}
 D_R:=RD=\{ x \in \mathbb R^N\ :\ R^{-1}x\in D \}.
 \end{align}

Let $\Omega=D_R$, denote by $u_R$ a least-energy solution of~\eqref{Peps} and by $c_R:=c_{\Omega}$ its energy level, respectively (see the definitions in~\ref{def:leastEnergyBounded}).
Furthermore let $Q$ and $c$ be the ground state solution in $\mathbb R^N$ and its energy level, respectively (see~\eqref{PQlambda} and~\eqref{c:eq}).
\subsection{Convergence}
We first argue that a least-energy solution $u_R$ of~\eqref{Peps} converges to a ground-state solution $Q$ of~\eqref{PQlambda}.   Recall the definition of the Nehari manifold $\cN(\Omega)$ given in~\eqref{Nehari:def}.

\begin{lemma}\label{lem:bds}
There is $C=C(\lambda,D)>0$ such that $|c_R|<C$ and $|u_R|_{p+1}^{p+1}=\|u_R\|^2<C$ for all $R\geq 1.$
\end{lemma}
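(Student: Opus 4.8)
The plan is to establish the two bounds by a standard comparison argument using the monotonicity of the least-energy level with respect to the domain. First I would note that since $D$ satisfies $(\mathcal S)$ and $R\geq 1$, we have the nesting $D\subset D_R$; more precisely, since $D$ is convex in each $x_i$-direction and bounded, the fixed domain $D_1=D$ is contained in $D_R$ for every $R\geq 1$. Consequently, extending functions by zero, we get $\mathcal H^s_0(D)\subset \mathcal H^s_0(D_R)$, and hence from the characterization in~\eqref{ceps:eq},
\begin{align*}
c_R=\inf_{w\in\mathcal H^s_0(D_R)\setminus\{0\}}\sup_{t\geq 0}J(tw)\leq \inf_{w\in\mathcal H^s_0(D)\setminus\{0\}}\sup_{t\geq 0}J(tw)=c_{D}.
\end{align*}
Since $c_{D}$ is a fixed finite positive number depending only on $\lambda$, $s$, $p$ and $D$, this gives the upper bound $c_R\leq c_D=:C_1$; together with $c_R>0$ from Proposition~\ref{proposition:leastEnergyOmegaProperties}, we obtain $|c_R|<C$ for a suitable constant.

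Next I would convert the energy bound into the claimed bound on the norm. Because $u_R\in\mathcal N(D_R)$, the Nehari constraint $\|u_R\|^2=|u_R|_{p+1}^{p+1}$ holds by~\eqref{Nehari:def}, so it suffices to bound $\|u_R\|^2$. Evaluating the energy on the Nehari manifold and using $J'(u_R)(u_R)=0$ gives the identity
\begin{align*}
c_R=J(u_R)=J(u_R)-\frac{1}{p+1}J'(u_R)(u_R)=\Bigl(\frac12-\frac{1}{p+1}\Bigr)\|u_R\|^2=\frac{p-1}{2(p+1)}\|u_R\|^2,
\end{align*}
so that $\|u_R\|^2=\frac{2(p+1)}{p-1}c_R$. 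Combining this with the bound $c_R<C_1$ just obtained yields $\|u_R\|^2<\frac{2(p+1)}{p-1}C_1$ and, by the Nehari identity, $|u_R|_{p+1}^{p+1}<\frac{2(p+1)}{p-1}C_1$ as well. Absorbing all constants into a single $C=C(\lambda,D)>0$ (also allowing dependence on $p$ and $s$, which are fixed) gives the statement.

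The only point requiring a little care — and the step I expect to be the mildest obstacle — is the inclusion $D\subset D_R$ for $R\geq 1$: this uses that $D$ is convex in each coordinate direction and that $0\in D$ (which follows from $G$-invariance, since $G$ contains $-\mathrm{Id}$ on each factor $\R^{m_i}$ with $m_i\geq 2$, forcing $0$ to lie in the convex sections through the origin, hence in $D$). Given $x\in D$ and $R\geq 1$, the point $R^{-1}x$ lies on the segment from $0\in D$ to $x\in D$, and convexity in each direction (applied coordinate-block by coordinate-block, or simply the segment convexity that $(\mathcal S)$ entails through the origin) gives $R^{-1}x\in D$, i.e. $x\in RD=D_R$. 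Everything else is a routine manipulation of the Nehari identity and the minimax characterization, both of which are already recorded in Section~\ref{sec:pre}.
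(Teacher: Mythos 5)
Your argument is correct and follows essentially the same route as the paper: bound $c_R$ from above by a fixed quantity using the nesting $\mathcal H^s_0(D)\subset\mathcal H^s_0(D_R)$ for $R\geq 1$ (the paper instead fixes a single test function $\varphi$ with support in $D$ and rescales it onto the Nehari manifold, which is the same idea), then convert the energy bound into the norm bound via the Nehari identity $c_R=\left(\tfrac12-\tfrac1{p+1}\right)\|u_R\|^2$. Your extra care in justifying $D\subseteq D_R$ via $0\in D$ and the coordinate-wise scaling property of $(\mathcal S)$ is a sound elaboration of a step the paper takes for granted.
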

\begin{proof}
    Let $R\geq 1$ and let $\varphi\in C^\infty_c(\R^N)$ be such that $\operatorname{supp}\varphi\subset  D\subseteq  D_R$. Let
    \begin{align*}
        t=\left(\frac{\|\varphi\|^2}{|\varphi|_{p+1}^{p+1}}\right)^\frac{1}{p-1}.
    \end{align*} 
    Then $t\varphi\in \cN(D)\subset \cN(D_R)$ and
    \begin{align*}
    c_R  = J(u_R) = \inf_{w\in \cN(D_R)}J(w)\leq J(\varphi)=:C_1
    \end{align*}
for all $R\geq 1$.  Moreover, $|u_R|_{p+1}^{p+1}=\|u_R\|^2 = (\frac{1}{2}-\frac{1}{p+1})^{-1}c_R\leq (\frac{1}{2}-\frac{1}{p+1})^{-1}C_1=:C$.
\end{proof}

The next Lemma shows that the energy of the growing domains converges to the energy of the ground state in $\R^N.$

\begin{lemma}\label{lem:ceps}
$c_R \to c$ and $\|u_R\|\to\|Q\|^2$ as $R\to +\infty.$
\end{lemma}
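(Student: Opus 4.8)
The plan is to prove the two convergences in Lemma~\ref{lem:ceps} by a standard two-sided comparison, exploiting Lemma~\ref{lem:bds} for compactness and Proposition~\ref{prop:existence} and Proposition~\ref{prop:nongeneracyLimitProblem} for the limit problem. First I would establish the easy inequality $\limsup_{R\to\infty} c_R \le c$: since $Q\in H^s(\R^N)$ decays and can be approximated in $H^s(\R^N)$ by functions compactly supported in balls $B_\rho$, and since $B_\rho\subset D_R$ for $R$ large (as $0\in D$ and $D$ is open, so $D$ contains a ball, hence $RD$ contains arbitrarily large balls), one builds a cutoff $Q_R := \eta_R Q$ with $\eta_R$ supported in $D_R$ and $\eta_R\to 1$; then $\|Q_R\|\to\|Q\|$ and $|Q_R|_{p+1}\to|Q|_{p+1}$, so projecting $Q_R$ onto the Nehari manifold $\cN(D_R)$ via $t_R Q_R$ with $t_R\to 1$ gives $c_R \le J(t_R Q_R)\to J(Q)=c$.

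For the reverse inequality $\liminf_{R\to\infty} c_R \ge c$, I would use the bound $\|u_R\|^2<C$ from Lemma~\ref{lem:bds} to extract a weakly convergent subsequence $u_R \weakto u_\infty$ in $H^s(\R^N)$ (after possibly no translation is needed, since the $u_R$ are radially decreasing about the origin by Remark~\ref{remark:symmmetryU}; indeed $D_R$ is convex in every $x_i$-direction and symmetric, so $u_R$ is even and monotone in each $|x_i|$, which combined with $G$-invariance forces a concentration at the origin and rules out vanishing/dichotomy). Passing to the limit in the weak formulation shows $u_\infty$ solves $(-\Delta)^s u_\infty + \lambda u_\infty = u_\infty^p$ in $\R^N$ with $u_\infty\ge 0$. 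I would argue $u_\infty\not\equiv 0$: if it were, then $u_R\weakto 0$ and, since the $u_R$ are uniformly bounded and radially decreasing, a Strauss-type argument (or the monotonicity and the $L^{p+1}$-bound) forces $|u_R|_{p+1}\to 0$, contradicting the fact that on the Nehari manifold $\|u_R\|^2=|u_R|_{p+1}^{p+1}$ is bounded below away from zero (this lower bound is standard: $\|u_R\|^2 = |u_R|_{p+1}^{p+1}\le S^{-(p+1)/2}\|u_R\|^{p+1}$ by Sobolev, so $\|u_R\|\ge S^{1/(p-1)}>0$). Hence $u_\infty$ is a nontrivial nonnegative solution of~\eqref{PQlambda}, so $J(u_\infty)\ge c$ by~\eqref{c:eq}, and by weak lower semicontinuity of the norm and Fatou/Brezis--Lieb, $\liminf J(u_R) = \liminf (\tfrac12-\tfrac1{p+1})\|u_R\|^2 \ge (\tfrac12-\tfrac1{p+1})\|u_\infty\|^2 = J(u_\infty)\ge c$, giving $\liminf c_R\ge c$.

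Combining the two bounds yields $c_R\to c$. Then from $c_R = (\tfrac12-\tfrac1{p+1})\|u_R\|^2$ and $c = (\tfrac12-\tfrac1{p+1})\|Q\|^2$ we immediately get $\|u_R\|^2\to\|Q\|^2$ (note the statement as written, ``$\|u_R\|\to\|Q\|^2$'', should read $\|u_R\|^2\to\|Q\|^2$). Moreover this forces strong convergence $u_R\to Q$ in $H^s(\R^N)$: the weak convergence plus convergence of norms in the Hilbert space $H^s(\R^N)$ gives strong convergence, and since $Q$ is the unique ground state up to translation (Proposition~\ref{prop:nongeneracyLimitProblem}) and our $u_R$ are centered at the origin by symmetry, no translation is lost, so the full sequence (not just a subsequence) converges once we fix the centering.

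The main obstacle is the compactness/no-vanishing step: ruling out that the mass of $u_R$ escapes to infinity or spreads out as $R\to\infty$. This is where the symmetry of $D$ is essential — the $G$-invariance gives, via Lemma~\ref{prop:compact}, compactness of the embedding $H^s_G(\R^N)\hookrightarrow L^{p+1}(\R^N)$, so that a $G$-invariant bounded sequence cannot vanish in $L^{p+1}$ without converging to $0$; but then one still must upgrade weak-to-strong and identify the limit as a \emph{ground state} rather than merely a solution, which is exactly what the energy identity $c_R=(\tfrac12-\tfrac1{p+1})\|u_R\|^2$ together with $\liminf J(u_R)\ge c$ accomplishes. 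I expect the clean way to organize this is: (i) $\limsup c_R\le c$ by the cutoff construction; (ii) extract $u_R\weakto u_\infty$ in $H^s_G$, hence $u_R\to u_\infty$ in $L^{p+1}$ by Lemma~\ref{prop:compact}; (iii) deduce $u_\infty\ne 0$ from the Nehari lower bound, so $u_\infty$ solves~\eqref{PQlambda}; (iv) $c\le J(u_\infty)\le\liminf J(u_R)=\liminf c_R$; (v) conclude equality and $\|u_R\|^2\to\|Q\|^2$, with strong $H^s$-convergence to $Q$ as a byproduct.
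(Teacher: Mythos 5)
Your argument for $\limsup_{R\to\infty}c_R\le c$ (cutoff/density plus projection onto the Nehari manifold, using $t_n\to 1$) matches the paper. But for the reverse inequality you mount a full compactness campaign — weak convergence, non-vanishing via the compact embedding $H^s_G(\R^N)\hookrightarrow L^{p+1}(\R^N)$ or a Strauss/monotonicity argument, identification of the weak limit as a nontrivial solution, Brezis--Lieb, and weak lower semicontinuity — when in fact the inequality $c\le c_R$ is immediate: since $\cH^s_0(D_R)\subset H^s(\R^N)$, the Nehari manifold $\cN(D_R)$ is a \emph{subset} of the $\R^N$-Nehari manifold $\cN$, so $c=\inf_{\cN}J\le\inf_{\cN(D_R)}J=c_R$ pointwise in $R$. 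This is the one-line observation the paper uses. Your detour is not wrong — each of its steps can be made to work — but it essentially pre-empts the content of Proposition~\ref{prop:utoQ} (the strong $H^s$-convergence $u_R\to Q$), which the paper proves separately and by a different no-vanishing mechanism (Lions' Lemma combined with the monotonicity of $u_R$ along segments inside $D_R$, rather than the compact embedding of $H^s_G$). Folding that argument into Lemma~\ref{lem:ceps} both obscures that the energy comparison needs no compactness at all and forces you to invoke $G$-invariance of $u_R$, symmetry, and the uniqueness theorem at a point in the development where they are unnecessary. You are right, incidentally, that the statement contains a typo and should read $\|u_R\|^2\to\|Q\|^2$; once $c_R\to c$ is in hand this follows directly from the Nehari identity $c_R=(\tfrac12-\tfrac1{p+1})\|u_R\|^2$ and $c=(\tfrac12-\tfrac1{p+1})\|Q\|^2$, exactly as you say.
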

\begin{proof}
By Lemma~\ref{lem:bds}, there is $c_*\geq 0$ such that $c_R\to c_*$ as $R\to +\infty$.  We claim that $c_*=c.$

Note that, since $\cH^s_0( D_R)\subset H^s(\R^N)$, then $c\leq c_R$ for all $R\to +\infty,$ which yields that $c\leq c_*$.  Next, let $Q\in H^s(\R^N)$ denote the unique radial least-energy solution of~\eqref{PQlambda}.  By density, there is a sequence $(\varphi_n)\subset C^\infty_c(\R^N)$ such that $\varphi_n\to Q$ in $H^s(\R^N)$ as $n\to\infty$.  By Sobolev embeddings, this also implies that $\varphi_n\to Q$ in $L^{p+1}(\R^N)$ as $n\to\infty$. Let $(R_n)\subset (0,+\infty)$ be such that $R_n\to +\infty$ as $n\to\infty$ and $\operatorname{supp}(\varphi_n)\subset  D_{R_n}$; in particular, $\varphi_n\in \cH^s_0(D_{R_n})$.  Note that 
\begin{align*}
t_n:=\left( \frac{\|\varphi_n\|^2}{|\varphi_n|_{p+1}^{p+1}} \right)^\frac{1}{p-1}
\to \left( \frac{\|Q\|^2}{|Q|_{p+1}^{p+1}} \right)^\frac{1}{p-1}=1\qquad \text{ as }n\to\infty.    
\end{align*}
Moreover, $\|t_n\varphi_n\|^2 = |t_n\varphi_n|_{p+1}^{p+1}$ for every $n\in \N$.  Then,
\begin{align*}
c_{R_n}\leq J(t_n\varphi_n)=t_n^2\left(\frac{1}{2}-\frac{1}{p+1}\right)\|\varphi_n\|^2\to \left(\frac{1}{2}-\frac{1}{p+1}\right)\|Q\|^2=c\qquad \text{ as }n\to\infty.
\end{align*}
This yields that $c_*\leq c$, and therefore $\lim_{n\to \infty}c_{R_n}=c_*=c$.  Since the limit is independent of the sequence $R_n$, we have that $\lim_{R\to +\infty}c_{R}=c$. Moreover,
\begin{align*}
 \|u_R\|^2 = \left(\frac{1}{2}-\frac{1}{p+1}\right)^{-1}c_R \to \left(\frac{1}{2}-\frac{1}{p+1}\right)^{-1}c =\|Q\|^2 \qquad \text{ as }R\to +\infty.
\end{align*}
This ends the proof.
\end{proof}

The following Proposition establishes the convergence of the least-energy solution in the growing domain to the ground state in $\R^N.$
\begin{proposition}\label{prop:utoQ}
$u_R\to Q$ in $H^s(\R^N)$ as $R\to +\infty$.
\end{proposition}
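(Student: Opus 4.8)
The plan is to combine the energy convergence from Lemma~\ref{lem:ceps} with a concentration-compactness argument in $H^s(\R^N)$. First I would observe that by Lemma~\ref{lem:bds} the sequence $(u_R)_{R\geq 1}$ is bounded in $H^s(\R^N)$ (with the equivalent norm $\|\cdot\|$), so that along any sequence $R_n\to+\infty$ there is a weak limit $u_R \weakto U$ in $H^s(\R^N)$, up to a subsequence. Since each $u_R$ satisfies $(-\Delta)^s u_R+\lambda u_R=u_R^p$ in $D_R$ and $D_R\uparrow \R^N$, passing to the limit in the weak formulation (testing against an arbitrary $\varphi\in C^\infty_c(\R^N)$, whose support lies in $D_R$ for $R$ large) shows that $U$ solves $(-\Delta)^s U+\lambda U=U^p$ in all of $\R^N$, with $U\geq 0$.

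The key dichotomy is whether $U\equiv 0$ or $U$ is a nontrivial solution of~\eqref{PQlambda}. To rule out vanishing, I would use the fractional concentration-compactness lemma (as in \cite{l84,palatucci,S13}): since $\|u_R\|^2=|u_R|_{p+1}^{p+1}\to \|Q\|^2=c\,(\tfrac12-\tfrac1{p+1})^{-1}>0$ by Lemma~\ref{lem:ceps}, the measures $|u_R|_{p+1}^{p+1}\,dx$ have uniformly positive mass bounded away from $0$, so vanishing would force $u_R\to 0$ in $L^{p+1}(\R^N)$, contradicting $|u_R|_{p+1}^{p+1}\to\|Q\|^2>0$. Dichotomy (splitting of mass into two bubbles) is excluded because the energy of each piece would be at least $c$ (the ground-state level is the smallest energy of a nontrivial solution), so two bubbles would give limiting energy $\geq 2c > c$, again contradicting $c_R\to c$. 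Hence compactness holds: up to translations $y_R\in\R^N$, $u_R(\cdot+y_R)\to U$ strongly in $L^{p+1}(\R^N)$ with $U\not\equiv 0$; testing the equation against $u_R(\cdot+y_R)$ and using $\|u_R\|^2=|u_R|_{p+1}^{p+1}$ upgrades this to strong convergence in $H^s(\R^N)$, and the limit $U$ is a nontrivial non-negative solution of~\eqref{PQlambda} with $J(U)=c$, i.e.\ $U$ is a ground state. By Proposition~\ref{prop:nongeneracyLimitProblem}, $U$ is, up to the translation, exactly $Q$.

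It remains to show that the translations $y_R$ can be taken to be $0$ (equivalently, bounded). This is where the symmetry of $D$ enters: since $D$—hence $D_R$—is $G$-invariant, and since least-energy solutions in a symmetric domain inherit the symmetry (by Remark~\ref{remark:symmmetryU}, $u_R$ is symmetric with respect to each hyperplane $\{x_i=0\}$ and monotone in $|x_i|$, as $D$ is convex in each $x_i$-direction), the maximum of $u_R$ is attained at the origin. A concentrating bubble centered at $y_R$ with $|y_R|\to\infty$ would then be incompatible with $u_R$ attaining its sup at $0$ while carrying essentially all the $L^{p+1}$-mass near $y_R$; more precisely, the symmetry forces the mass to be distributed symmetrically, and a single bubble far from the origin would generate its mirror images, again producing energy $\geq 2c$. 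Hence $y_R$ is bounded and, after passing to a further subsequence, $y_R\to y_*$; the monotonicity in each variable then pins $y_*=0$, so $u_R\to Q$ (the radial ground state centered at the origin) strongly in $H^s(\R^N)$. Since the limit is the same for every subsequence, the full limit $R\to+\infty$ follows.

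\textbf{Main obstacle.} The delicate point is the exclusion of dichotomy and of escaping translations: one must invoke the fractional concentration-compactness machinery carefully (the nonlocal Sobolev space does not localize as cleanly as $H^1$), and one must genuinely use the $G$-symmetry and monotonicity of $u_R$ to prevent the bubble from drifting to infinity. I expect this symmetry/compactness interplay—rather than the passage to the limit in the equation, which is routine—to be the heart of the argument.
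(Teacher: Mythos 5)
Your overall strategy — bound the sequence, pass to a weak limit solving the equation on $\R^N$, then use concentration-compactness together with the symmetry/monotonicity of $u_R$ to prevent the mass from drifting to infinity — is the same as the paper's. But the specific mechanism is different in a way worth noting. You run the full concentration-compactness trichotomy: exclude vanishing via the $L^{p+1}$ mass bound, exclude dichotomy via energy doubling ($\geq 2c > c$), then land in the compactness case up to translations $y_R$, and finally use the $G$-symmetry to argue that an escaping bubble at $y_R$ would produce mirror-image bubbles and hence energy $\geq 2c$. The paper never needs the dichotomy case at all: it applies Lions' vanishing lemma directly to $w_n := u_{R_n}-u_*$, obtains a single escaping mass point $x_n$ with $\int_{B_1(x_n)}|u_{R_n}|^2\geq\delta$, and then uses the \emph{monotonicity in each variable $x_i$} to replicate that mass on $k$ disjoint unit balls $B_1(\zeta_j)$ placed along the segment from the origin to $x_n$ (all inside $D_{R_n}$ by convexity), giving $|u_{R_n}|_2^2 \geq k\delta$ — a contradiction with Lemma~\ref{lem:bds} once $k$ is large. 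This is cleaner and more elementary: it avoids justifying the nonlocal energy-splitting inequality needed for your dichotomy step (which does require care because $\cE$ does not localize), and it uses the symmetry only through the quantitative monotonicity along a line segment rather than through a qualitative mirror-image argument.

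One point to tighten in your version if you were to carry it out: your ``two bubbles $\Rightarrow$ energy $\geq 2c$'' step implicitly uses a Brezis--Lieb / energy decomposition in $\cH^s_0(D_{R_n})$ and needs the nonlocal cross-term between the two bubbles to be negligible; this is true but must be argued, whereas the paper's route sidesteps it entirely. Your ``$y_*=0$ by monotonicity'' conclusion is correct and matches the paper's final identification of $u_*$ with $Q$ (since both $u_*$ and $Q$ are symmetric in each $x_i$ and decreasing in $|x_i|$, the translation must be trivial). The passage from norm convergence $\|u_{R_n}\|^2\to\|Q\|^2$ plus weak convergence to strong $H^s$ convergence, which you mention briefly, is exactly how the paper upgrades the convergence as well.
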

\begin{proof}
Let $(R_n)\subset (1,+\infty)$ be such that $\lim_{n\to\infty}R_n=+\infty$. By Lemma~\ref{lem:bds}, we know that $\|u_{R_n}\|^2<C$ for all $n\in \mathbb N.$ Then there is $u_*\in H^s(\R^N)$ such that (passing to a subsequence) 
\begin{align}\label{cm1}
u_{R_n} \weakto u_*\qquad \text{ weakly in $H^s(\R^N)$ as $n\to\infty$}
\end{align}
 and strongly in $L_{loc}^2(\R^N)$. Moreover, by Lemma~\ref{lem:ceps}, we know that 
\begin{align}\label{c0}
|u_{R_n}|_{p+1}^{p+1}=\|u_{R_n}\|^{2}\to \|Q\|^2=|Q|_{p+1}^{p+1}>0\qquad  \text{ as }n\to\infty.
\end{align}
Let $w_n:=u_{R_n}-u_*$. We claim that 
\begin{align}\label{c1}
w_n\to 0\qquad \text{ in $L^q(\R^N)$ for all $q\in\left(2,\frac{2N}{N-2s}\right).$ }
\end{align}
Arguing by contradiction, assume that $w_n$ does not converge to 0 in $L^q(\R^N)$ for some $q\in(2,\frac{2N}{N-2s}).$ Since $w_n\to 0$ in $L_{loc}^2(\R^N)$, we have, by Lions Lemma (see \cite[Lemma 2.4]{S13}), that there are $(x_n)\subset \R^N$ such that $|x_n|\to \infty$ as $n\to\infty$ and $\delta>0$ such that
\begin{align*}
\int_{B_1(x_n)}|w_n|^2\, dy >2\delta\qquad \text{ for all }n\in\N.
\end{align*}
Note that
\begin{align*}
    \int_{B_1(x_n)}|w_n|^2\, dy
    =\int_{B_1(x_n)}|u_{R_n}|^2-2u_*u_{R_n}+|u_*|^2\, dy = \int_{B_1(x_n)}|u_{R_n}|^2\, dy+o(1)\qquad \text{ as }n\to\infty,
\end{align*}
because $u_*\in L^2(\R^N)$ and because there is $C_1>0$ such that $|u_{R_n}|^2_{2}<C_1$ for all $n\in\mathbb N$. Then, for all $n$ large enough,
\begin{align*}
 \int_{B_1(x_n)}|u_{R_n}|^2\geq \delta.
\end{align*}
By \cite[Corollary 1.2]{JW14}, we know that $u_{R_n}$ is symmetric and monotone decreasing in $x_i$ for every $i=1,\ldots,N.$ Consider the set of points connecting the ball $B_1(0)$ and $B_1(x_n)$, namely, 
\begin{align*}
L_n:=\{x\in \R^N\::\:x=tb_1+(1-t)b_2,\, b_1\in B_1(0), b_2\in B_1(x_n)\}.    
\end{align*}
Since $D$ satisfies $(\mathcal S),$ we have that $L_n\subset D_{R_n}.$  Fix $k\in\mathbb N$ such that $\delta k >C_1$. By taking $n$ sufficiently large, we have that $|x_n|\geq 2k+1$. For $j=1,\ldots,k$, let $\zeta_j:=2j\frac{x_n}{|x_n|}$, then $B_1(\zeta_i)\subset L_n$ and, by monotonicity,
\begin{align*}
C_1>|u_{R_n}|_{2}^2\geq \int_{L_n}|u_{R_n}|^2\, dx\geq \sum_{j=1}^k \int_{B_1(\zeta_k)}|u_{R_n}|^2\, dx\geq k\int_{B_1(x_n)}|u_{R_n}|^2\, dx\geq \delta k >C_1.
\end{align*}
This yields a contradiction and~\eqref{c1} follows.  Then~\eqref{c0} implies that $|u_*|_{p+1}=|Q|_{p+1}> 0$.  Furthermore, by~\eqref{cm1},~\eqref{c1}, and Lemma~\ref{lem:ceps}, we have that $u_*$ is a (least-energy) weak solution of~\eqref{PQlambda}. By uniqueness, we have that $u_*$ must be a translation of the unique radial least-energy solution $Q$ of~\eqref{PQlambda}.  But using that $u_*$ is symmetric and monotone in $x_i$, we arrive at the equality $u_*=Q.$  Since the limit does not depend on the sequence $R_n$, we obtain the claim.
\end{proof}

\subsection{Nondegeneracy and uniqueness} \label{subsec:UniqNondegLarge}

We are ready to show Theorem~\ref{theorem:uniquenessLARGE}. Recall the definition of $D_R$ and $H^s_G(\mathbb R^N)$ given in~\eqref{dr:def} and~\eqref{HsG:def} respectively. 

\begin{proof}[Proof of Theorem~\ref{theorem:uniquenessLARGE}]
\emph{Nondegeneracy:}  We argue the nondegeneracy first,  by contradiction.  
Assume that there exists a sequence 
$R_n\rightarrow +\infty$ as $n\rightarrow +\infty$ and a sequence of least-energy solutions $u_{R_n}$ of ~\eqref{Peps} in $\Omega=D_{R_n}$ such that the linearized problem~\eqref{linearizedEquationSection} at $u_{R_n}$ admits a  non trivial solution $v_n\in \cH^s_0( D_{R_n})$. Since $u_{R_n}$ has Morse index one (see~\eqref{MorseofLeastEnergy}), then $v_n$ is a second eigenfunction for the linearized eigenvalue problem~\eqref{eigenvalueProblematu}, and the  associated  eigenvalue is $\mu_2=0$.
By our symmetry result for second eigenfunctions (see Corollary~\ref{corollary: symmetryOfKernel_CylindricalDomains}) we know that $v_n$ is sign-changing and it belongs to $ H_{G}^s(\mathbb R^N)$. Since~\eqref{linearizedEquationSection} is a linear problem, without loss of generality we may assume that $\|v_n\|=1$ for all $n\in \N$. By the reflexivity of the space $H^s(\mathbb R^N)$, there exists $v_{\ast}\in H_{G}^s(\mathbb R^N)$
such that 
$v_n$ converges to $v_{\ast}$ weakly in $H^s(\mathbb R^N)$, strongly in $L^2_{loc}(\mathbb R^N),$ and pointwisely for a.e. $x\in\mathbb R^N$.  Furthermore, by the compactness of the embedding of $H^s_{G}(\mathbb R^N)\hookrightarrow L^{p+1}(\mathbb R^N)$ (see Lemma~\ref{prop:compact}), we deduce that
\begin{align}\label{ccvn}
v_n\rightarrow v_{\ast}\mbox{ strongly in } L^{p+1}(\mathbb R^N).    
\end{align}
Let us recall that 
\begin{equation}\label{unifBoundpu}
|u_{R_n}|_{p+1}
\leq C_1
\end{equation}
for some constant $C_1>0,$ by Lemma~\ref{lem:bds}.   Hence,  taking $v_n$ as a test function in the weak formulation of~\eqref{linearizedEquationSection},  using   H\"older inequality 
with exponents $\frac{p+1}{p-1}$ and $\frac{p+1}{2}$, and~\eqref{unifBoundpu}
we derive that
\begin{align*}
    1=\|v_n\|^2=p\int_{\mathbb R^N} 
u_{R_n}^{p-1}v_n^2
\leq p
|u_{R_n}|_{p+1}^{\frac{p-1}{p+1}}
|v_n|_{p+1}^{\frac{2}{p+1}}\leq C_2|v_n|_{p+1}^{\frac{2}{p+1}}
\end{align*}
for some $C_2>0$.  Then, by~\eqref{ccvn}, $|v_{\ast}|_{p+1}+o(1)=|v_n|_{p+1}\geq C_2^{-1}>0$ and, as a consequence, $v_\ast\not\equiv 0$. 

By Proposition~\ref{prop:utoQ}, we know that $u_{R_n}\to Q$ in $H^s(\R^N)$ as $n\to +\infty$ , hence passing to the limit into the weak formulation of~\eqref{linearizedEquationSection} we obtain that $v_{\ast}$ is a   nontrivial solution of 
\begin{equation}
\label{LIMITlinearizedproblem}
(-\Delta)^{s} v_{\ast}+\lambda v_{\ast} = pQ^{p-1}v_{\ast} \qquad {\text{ in }} \mathbb R^N.
\end{equation}
In particular, $v_{\ast}\in \operatorname{ker}(L_+)$ and it is symmetric with respect to $x_i$, $i=1,\ldots, N$ and nontrivial.
This is a contradiction with the nondegeneracy of the least-energy solution $Q$ (see Proposition~\ref{prop:nongeneracyLimitProblem}), indeed~\eqref{kernelLimitProblem} implies in particular the antisymmetry with respect to $x_j$, for some $j=1,\ldots, N$, for any function in $\operatorname{ker}(L_+)$.

\medskip

\emph{Uniqueness:}  Now we argue the uniqueness, again by contradiction. Assume that there is a sequence $R_n\to +\infty$ and two different least-energy solutions $u_{R_n}$ and $\widetilde u_{R_n}$ of $\eqref{Peps}$ in $\Omega=D_R$, with $R={R_n}$ for every $n\in \mathbb N$. Let $w_n:=\frac{u_{R_n}-\widetilde u_{R_n}}{\|u_{R_n}-\widetilde u_{R_n}\|}$. 
Then, $w_n\in \cH^s_0( D_{R_n})$ solves weakly 
\begin{align}\label{wn:eq}
    (-\Delta)^s w_n+\lambda w_n = c_n w_n\quad \text{ in } D_{R_n},
\end{align}
where $c_n:=\int_0^1 p|tu_{R_n}+(1-t)\widetilde u_{R_n}|^{p-1}\, dt.$
Furthermore, since by \cite[Corollary 1.2]{JW14} $u_{R_n}$ and $\widetilde u_{R_n}$ are symmetric with respect to $x_i$, for all $i=1,\ldots, N$,  $w_{n}$ is symmetric with respect to $x_i$, for all $i=1,\ldots, N$.\\
By definition $\|w_n\|=1,$ hence there is $w_*\in H^s(\R^N)$ such that $w_n\weakto w_*$ as $n\to\infty$.  
Therefore $w_*$ inherits the same symmetry of $w_n$, namely it is symmetric with respect to $x_i$, for all $i=1,\ldots, N$.\\
By Proposition~\ref{prop:utoQ}, we know that $u_{R_n},\widetilde u_{R_n}\to Q$ in $H^s(\R^N)$ as $n\to +\infty$, in particular, by dominated convergence,
\begin{align*}
\lim_{n\to\infty}c_n(x)=\lim_{n\to\infty}\int_0^1 p|tu_{R_n}(x)+(1-t)\widetilde u_{R_n}(x)|^{p-1}\, dt=p|Q(x)|^{p-1}=:c(x)\qquad \text{ for all }x\in \R^N.
\end{align*}
Then, by weak convergence, $w_*$ solves
\begin{align*}
(-\Delta)^s w_* + \lambda w_* = p|Q|^{p-1}w_*\qquad \text{ in }\R^N.
\end{align*}
By the nondegeneracy of $Q$ (see Proposition~\ref{prop:nongeneracyLimitProblem}), we have that $w_*$ is a linear combination of  the functions $\partial_{x_i}Q$ for $i=1,\ldots,N.$ Since $Q$ is radially symmetric, this implies that $\partial_{x_i}Q$ is antisymmetric with respect to $x_i$. By the symmetry propertis of  $w_*$, this implies that $w_*$ is orthogonal to $\partial_{x_i}Q$ for every $i=1,\ldots,N,$ in $L^2$-sense and therefore
\begin{align}\label{c2}
w_*\equiv 0.    
\end{align}

Testing~\eqref{wn:eq} with $w_n$ and using Hölder's inequality, we obtain that
\begin{align*}
1=\|w_n\|^2=\int_{\R^N}c_n w_n^2\, dx
\leq |w_n|_{p+1}^2 |c_n|_{\frac{p+1}{p-1}}.
\end{align*}

By the definition of $c_n$, we have that
\begin{align*}
|c_n|_{\frac{p+1}{p-1}}^{\frac{p+1}{p-1}}
\leq p^{\frac{p+1}{p-1}}2^{p+1}\int_ D|u_{R_n}|^{p+1}+|\widetilde u_{R_n}|^{p+1}\, dx=p^{\frac{p+1}{p-1}}2^{p+2}|Q|_{\frac{p+1}{p-1}}^{\frac{p+1}{p-1}}+o(1)
\end{align*}
as $n\to\infty.$ Then, there is $\delta>0$ such that $|w_*|_{p+1}^2+o(1)=|w_n|_{p+1}^2>\delta$ as $n\to \infty$, where we use the compact embedding $\cH^s_{G}(\R^N) \hookrightarrow L^{p+1}(\R^N)$ (see Lemma~\ref{prop:compact}).  This yields a contradiction with~\eqref{c2} and the claim follows.
\end{proof}

\begin{remark}\label{rmk:nondegeneracyImpliesUniquenes}
In our proof of Theorem~\ref{theorem:uniquenessLARGE} uniqueness and nondegeneracy have been proved separately and independently. In particular we have obtained the uniqueness of $u$ with a direct argument, which does not make use of the symmetry result for the linearized  problem contained  in Theorem~\ref{sym}, but exploits the uniqueness and nondegeneracy of the ground state solution in $\mathbb R^N$.
Theorem~\ref{sym} is instead at the core of the proof of the nondegeneracy of $u$.\\
We stress that  the uniqueness of $u$ could also be derived alternatively from the nondegeneracy, following similar arguments as in the local case  (see \cite{lin1994,DamascelliGrossiPacellaAIHP1999}).
Indeed we already know that uniqueness holds for $p$ close to $1$  (see \cite{DIS22}), so thanks to the nondegeneracy we could apply a continuation argument in $p$ based on the implicit function theorem and recover the uniqueness  result in the full range of $p$'s (see the arguments in \cite[proof of Theorem 1.4]{FW23preprint}).
\end{remark}
\begin{remark}
    Theorem~\ref{theorem:uniquenessLARGE} holds for least-energy solutions and not for any non-negative solution $u$ of~\eqref{Peps}.  This is due to the fact that the uniqueness and nondegeneracy result in \cite{FLS16} is known only for ground state solutions of the problem in $\mathbb R^N$.
If the results in \cite{FLS16} were known to hold for \emph{any} positive $H^s(\mathbb R^N)$ solution, then
our direct proof for the uniqueness (see Remark~\ref{rmk:nondegeneracyImpliesUniquenes}) can be adjusted to imply \emph{uniqueness  for any non-negative solution} $u$ of~\eqref{Peps}, provided the symmetric domain is large enough. 
   Once uniqueness is established, then the nondegeneracy also follows by Theorem~\ref{theorem:uniquenessLARGE} (indeed the unique solution would then be the least-energy one).
\end{remark}

\begin{flushleft}
\textbf{Abdelrazek Dieb}\\
Department of Mathematics, Faculty of Mathematics and computer science \\
University Ibn Khaldoun of Tiaret\\
Tiaret 14000, Algeria\\
and\\
Laboratoire d’Analyse Nonlinéaire et Mathématiques Appliquées\\
Université Abou Bakr Belkaïd, Tlemcen, Algeria\\
\texttt{abdelrazek.dieb@univ-tiaret.dz}
\vspace{.3cm}

\textbf{Isabella Ianni}\\
Dipartimento di Scienze di Base e Applicate per l’Ingegneria\\
Sapienza Universit\`a di Roma\\
Via Scarpa 16, 00161 Roma, Italy\\
\texttt{isabella.ianni@uniroma1.it} 
\vspace{.3cm}

\textbf{Alberto Saldaña}\\
Instituto de Matemáticas\\
Universidad Nacional Autónoma de México\\
Circuito Exterior, Ciudad Universitaria\\
04510 Coyoacán, Ciudad de México, Mexico\\
\texttt{alberto.saldana@im.unam.mx} 
\vspace{.3cm}
\end{flushleft}
\end{document}